\newcounter{ToDo}
\newcounter{gaocomm}
\newcounter{Note}
\definecolor{blue-violet}{rgb}{0.54, 0.17, 0.89}
\definecolor{mygreen}{rgb}{0.0, 0.5, 0.0}
\definecolor{awesome}{rgb}{1.0, 0.13, 0.32}
\definecolor{bostonuniversityred}{rgb}{1.0, 0.0, 0.0}
\newcommand{\thickhline}{%
    \noalign {\ifnum 0=`}\fi \hrule height 1pt
    \futurelet \reserved@a \@xhline
}
\newcolumntype{"}{@{\hskip\tabcolsep\vrule width 1pt\hskip\tabcolsep}}
\patchcmd{\algorithmic}{\addtolength{\ALC@tlm}{\leftmargin} }{\addtolength{\ALC@tlm}{\leftmargin}}{}{}
\begin{document}

\title{Escaping Saddle Points Faster on Manifolds via Perturbed Riemannian Stochastic Recursive Gradient}

\author{\name Andi Han \email andi.han@sydney.edu.au 
       \AND
       \name Junbin Gao \email junbin.gao@sydney.edu.au \\
       \addr Discipline of Business Analytics\\
       University of Sydney}
\maketitle

\begin{abstract}
In this paper, we propose a variant of Riemannian stochastic recursive gradient method that can achieve second-order convergence guarantee and escape saddle points using simple perturbation. The idea is to perturb the iterates when gradient is small and carry out stochastic recursive gradient updates over tangent space. This avoids the complication of exploiting Riemannian geometry. We show that under finite-sum setting, our algorithm requires $\widetilde{\mathcal{O}}\big( \frac{ \sqrt{n}}{\epsilon^2} + \frac{  \sqrt{n} }{\delta^4} + \frac{  n}{\delta^3}\big)$ stochastic gradient queries to find a $(\epsilon, \delta)$-second-order critical point. This strictly improves the complexity of perturbed Riemannian gradient descent and is superior to perturbed Riemannian accelerated gradient descent under large-sample settings. We also provide a complexity of $\widetilde{\mathcal{O}} \big(  \frac{ 1}{\epsilon^3} + \frac{ 1}{\delta^3 \epsilon^2} + \frac{1 }{\delta^4 \epsilon} \big)$ for online optimization, which is novel on Riemannian manifold in terms of second-order convergence using only first-order information.
\end{abstract}

\section{Introduction}
Consider the following finite-sum and online optimization problem defined on Riemannian manifold $\mathcal{M}$:
\begin{equation}
    \min_{x \in \mathcal{M}} F(x) := \begin{cases} \mathbb{E}_{\omega}[f(x; \omega)], & \text{ online}\\
    \frac{1}{n} \sum_{i=1}^n f_i(x), & \text{ finite-sum } \end{cases} \label{problem_formulation_begin}
\end{equation}
where $F: \mathcal{M} \xrightarrow[]{} \mathbb{R}$ is a non-convex function. Finite-sum optimization setting is a special case of online setting where $\omega$ can be finitely sampled. For simplicity of analysis, we only consider finite-sum formulation $\frac{1}{n} \sum_{i=1}^n f_i(x)$ and refer to it as online optimization when $n$ approaches infinity. Solving problem \eqref{problem_formulation_begin} globally on Euclidean space (i.e when $\mathcal{M} \equiv \mathbb{R}^d$) is NP-hard in general, letting alone for any Riemannian manifold. Thus many algorithms set out to find only approximate first-order critical points with small gradients. But this is usually insufficient because for non-convex problems, a point with small gradient can be either around a local minima, a local maxima or a saddle point. Therefore, to avoid being trapped by saddle points (and possibly local maxima), we need to find approximate second-order critical points (see Definition \ref{epsilon_second_order_point}) with small gradients as well as nearly positive semi-definite Hessians. 

Second-order algorithms can explore curvature information via Hessian and thus escape saddle points by construction. However, it is always desired that simpler algorithms using only gradient information are designed for this purpose, because access to Hessian can be costly and sometimes unavailable in real applications. It turns out that gradient descent (GD) can inherently escape saddle points, yet requiring exponential time \citep{DuGDEscape2017}. Instead, \cite{JinPGD2017} showed that by injecting isotropic Gaussian noise to iterates, the perturbed gradient descent (PGD) can reach approximate $\epsilon$-second-order stationarity within $\widetilde{\mathcal{O}}(n\epsilon^{-2})$ stochastic gradient queries with high probability. Similarly, a perturbed variant of stochastic gradient algorithm (PSGD) requires a complexity of $\widetilde{\mathcal{O}}(\epsilon^{-4})$ \citep{JinPGD_PSGD2019}. The complexities match that of vanilla GD and SGD to find approximate first-order stationary points up to some 
poly-logarithmic factors. 

When considering problem \eqref{problem_formulation_begin} on any arbitrary manifold, we require Riemannian gradient $\text{grad}F(x) \in T_x\mathcal{M}$ as well as a retraction $R_x: T_x\mathcal{M} \xrightarrow[]{} \mathbb{R}$ that allows iterates to be updated on the manifold with direction determined by Riemannian gradient. Defining a `pullback' function $\hat{F}_x:= F \circ R_x: T_x\mathcal{M} \xrightarrow{} \mathbb{R}$, \cite{CriscitielloEESPManifold2019} extended the idea of perturbed gradient descent (referred to as PRGD) by executing perturbations and the following gradient steps on the tangent space. Given that $\hat{F}_x$ is naturally defined on a vector space, the analysis can be largely drawn from \citep{JinPGD2017}, with similar convergence guarantees. Recently, \cite{CriscitielloAFOM2020} also proposed  perturbed Riemannian accelerated gradient descent (PRAGD), a direct generalization of its Euclidean counterpart originally proposed in \citep{JinPAGD2018}. It achieves an even lower complexity of $\widetilde{\mathcal{O}}(n \epsilon^{-7/4})$ compared to perturbed GD. However, these algorithms are sub-optimal under finite-sum settings and even fail to work for online problems where full gradient is inaccessible. 

When objective can be decomposed into component functions as in problem \eqref{problem_formulation_begin}, variance reduction techniques can improve gradient complexity of both GD and SGD \citep{ReddiSVRG2016,NguyenSRG2017,FangSPIDER2018}. The main idea is to exploit past gradient information to correct for deviation of current stochastic gradients, by comparing either to a fixed snapshot point (SVRG, \cite{ReddiSVRG2016}) or recursively to previous iterates (SRG/SPIDER, \cite{NguyenSARAH2017,FangSPIDER2018}). Notably, the gradient complexities $\mathcal{O}(\sqrt{n}\epsilon^{-2})$ and $\mathcal{O}(\epsilon^{-3})$ achieved by SRG and SPIDER are optimal to achieve first-order guarantees under finite-sum and online settings respectively \citep{FangSPIDER2018,ArjevaniLBOnline2019}. Motivated by the success of variance reduction and the simplicity of adding perturbation to ensure second-order convergence, \cite{LiSSRGD2019} proposed perturbed stochastic recursive gradient method (PSRG), which can escape saddle points faster than perturbed GD and SGD. In this paper, we generalize PSRG to optimization problems defined on Riemannian manifolds with the following contributions:
\begin{itemize}
    \item Our proposed method PRSRG is the first simple stochastic algorithm that achieves second-order guarantees on Riemannian manifold using only first-order information. That is, the algorithm only requires simple perturbations and does not involve any negative curvature exploitation as in PRAGD. Our algorithm adopts the idea of tangent space steps as in \citep{CriscitielloEESPManifold2019}, which results in simple analysis for convergence. 
    \item Our complexity is measured against $(\epsilon,\delta)$-second-order stationarity, which is more general than $\epsilon$-second-order stationarity where $\delta = \mathcal{O}(\sqrt{\epsilon})$. Under finite-sum setting, PRSRG strictly improves the complexity of PRGD by $\widetilde{\mathcal{O}}(\min\{ n/\epsilon^{1/2},$ $\sqrt{n}/\epsilon^2 \})$ and is superior to PRAGD when $n \geq \epsilon^{-1/2}$. We also provide a complexity of PRSRG under online setting, which is novel for Riemannian optimization. 
\end{itemize}

\section{Other related work}
Following the work by \citep{JinPGD2017}, except for the perturbed SGD \citep{JinPGD_PSGD2019} and perturbed accelerated gradient method \citep{JinPAGD2018}, \cite{GePSVRG2019} showed that SVRG with perturbation (PSVRG) is sufficient to escape saddle points and a stabilized version is also developed to further improve the dependency on the Hessian Lipschitz constant. However, its complexity is strictly worse than PSRG \citep{LiSSRGD2019}. We suspect that, with similar tangent space trick, PSVRG can be generalized to Riemannian manifolds with little efforts. 

Another line of research is to incorporate negative curvature search subroutines \citep{XuNEON2018,AllenNEON22018} in some classic first-order algorithms, such as GD and SGD and also variance-reduction algorithms including SVRG \citep{AllenNEON22018}, SNVRG \citep{ZhouSNVRG_NEON2018} and SPIDER \citep{FangSPIDER2018}. It usually requires access to Hessian-vector products when searching for the smallest eigenvector direction \citep{XuNEON2018}. Even though \cite{AllenNEON22018} managed to only use first-order information for the same goal, it still involves an iterative process that can be difficult to implement in practice. 

To solve general optimization problems on Riemannian manifold, gradient descent and stochastic gradient descent have been generalized with provable guarantees \citep{BoumalRGD2019,BonnabelSGD2013,HosseiniSGD2020}. Some acceleration and variance reduction techniques have also been considered for speeding up the convergence of GD and SGD \citep{ZhangRAGD2018,AhnRAGD2020,ZhangRSVRG2016, KasaiRSRG2018}. These first-order methods however can only guarantee first-order convergence. To find second-order critical points, Newton's methods, particularly the famous globalization variants, trust region and cubic regularization have been extended to manifold optimization, achieving similar second-order guarantees \citep{BoumalRGD2019, AgarwalARC2018}. 

Finally, we note that \cite{SunPRGD2019} also proposed perturbed gradient descent on manifold. The perturbations and the following updates are performed directly on manifold, which is contrastly different to tangent space steps in this paper. The preference of tangent space steps over manifold steps is mainly due to the complexity of analysis. That is, \cite{SunPRGD2019} requires more complicated geometric results as well as the use of exponential map given its natural connection to geodesics and Riemannian distance. By executing all the steps on tangent space, we can bypass these results while some regularity conditions should be carefully managed.

\begin{table}[!t]
\setlength{\tabcolsep}{8pt}
\renewcommand{\arraystretch}{1.8}
\centering
\caption{Comparison of stochastic gradient complexity to achieve second-order guarantees}
\begin{tabular}{c | c |c c}
\thickhline
\textit{}                   &       & \makecell{$\epsilon$-second-order \\ stationarity} & \makecell{$(\epsilon, \delta)$-second-order \\ stationarity} \\\hline
\multirow{3}{*}{Finite-sum} & \makecell{PRGD \\ \citep{CriscitielloEESPManifold2019}}  &  $\widetilde{\mathcal{O}}\big(\frac{n}{\epsilon^2} \big)$             &  ---                      \\
                            & \makecell{PRAGD \\ \citep{CriscitielloAFOM2020}}&  $\widetilde{\mathcal{O}}\big( \frac{n}{\epsilon^{7/4}} \big)$                &  ---                   \\
                            & PRSRG (this work) & $\widetilde{\mathcal{O}}\big( \frac{n}{\epsilon^{3/2}} + \frac{\sqrt{n}}{\epsilon^2} \big)$                & $\widetilde{\mathcal{O}}\big( \frac{ \sqrt{n}}{\epsilon^2} + \frac{  \sqrt{n} }{\delta^4} + \frac{  n}{\delta^3}\big)$                       \\\hline
Online                      & PRSRG (this work) & $\widetilde{\mathcal{O}}\big( \frac{ 1}{\epsilon^{7/2}} \big)$               & $\widetilde{\mathcal{O}}\big( \frac{ 1}{\epsilon^3} + \frac{ 1}{\delta^3 \epsilon^2} + \frac{1 }{\delta^4 \epsilon}\big)$                  \\
\thickhline
\end{tabular}
\end{table}

\section{Preliminaries}
Here we start with a short review of some preliminary definitions. Readers can refer to \citep{AbsilOPTManifold2009} for more detailed discussions on manifold geometry. We consider $\mathcal{M}$ to be a $d$-dimensional Riemannian manifold. The tangent space $T_x\mathcal{M}$ for $x \in \mathcal{M}$ is a $d$-dimensional vector space. $\mathcal{M}$ is equipped with an inner product $\langle \cdot, \cdot \rangle_x$ and a corresponding norm $\| \cdot\|_x$ on each tangent space $T_x\mathcal{M}$, $\forall \, x \in \mathcal{M}$. Tangent bundle is the union of tangent spaces defined as $T\mathcal{M}:= \{ (x, u) : x \in \mathcal{M}, u \in T_x\mathcal{M} \}$. Riemannian gradient of a function $F:\mathcal{M}\xrightarrow[]{} \mathbb{R}$ is the vector field $\text{grad}F$ that uniquely satisfies $\text{D} F(x)[u] = \langle \text{grad}F(x), u \rangle_x$, for all $(x, u) \in T\mathcal{M}$ where D$F(x)[u]$ is the directional derivative of $F(x)$ along $u$. Riemannian Hessian of $F$ is the covariant derivative of grad$F$ defined as that for all $(x, u) \in T\mathcal{M}$, it satisfies $\text{Hess}F(x)[u] = \nabla_u \text{grad}F(x)$, where $\nabla$ is the Riemannian connection (also known as Levi-Civita connection). Note that we also use $\nabla$ to represent differentiation on vector space, which is a special case of covariant derivative. 

Retraction $R_x: T_x\mathcal{M} \xrightarrow{} \mathcal{M}$ maps a tangent vector to manifold surface by satisfying (i) $R_x(0) = x$, and (ii) $\text{D} R_x(0)$ is the identity map. $\text{D} R_x(u)$ represents the differential of retraction, denoted as $T_{u}: T_x\mathcal{M} \xrightarrow{} T_{R_x(u)}\mathcal{M}$. Exponential map is a special instance of retraction and hence our results in this paper can be trivially generalized to this particular retraction. Define the pullback function $\hat{F}_x := F \circ R_x: T_x\mathcal{M} \xrightarrow{} \mathbb{R}$. That is, $\hat{F}_x(u) = \frac{1}{n} \sum_{i=1}^n \hat{f}_{i, x}(u)$ $= \frac{1}{n}\sum_{i=1}^n {f}_i (R_x(u))$, where we also define the pullback components as $\hat{f}_{i, x} := f_i \circ R_x$. Given that the domain of pullback function is a vector space, we can represent its gradient $\nabla \hat{F}_x$ and Hessian $\nabla^2 \hat{F}_x$ in terms of Riemannian gradient and Hessian as well as the differentiated retraction $T_{u}$ in the following Lemma (see Lemma 2.5 in \cite{CriscitielloAFOM2020}).
\begin{lemma}[Gradient and Hessian of the pullback]
\label{define_grad_hess_pullback}
    For a twice continuously differentiable function $f: \mathcal{M} \xrightarrow{} \mathbb{R}$ and $(x, u) \in T\mathcal{M}$, we have
    \begin{equation*}
        \nabla \hat{F}_x(u) = T_{u}^* \emph{grad}F(R_x(u)) \quad \emph{ and } \quad \nabla^2 \hat{F}_{x} (u) = T_{u}^* \circ \emph{Hess} f(R_x(u)) \circ T_{u} + W_u,
    \end{equation*}
    where $T_{u}^*$ denotes the adjoint operator of $T_{u}$ and $W_u$ is a symmetric linear operator defined by $\langle W_u[\Dot{u}], \Dot{u} \rangle_x$ $= \langle \emph{grad}f(R_x(u)), c''(0) \rangle_{R_x(u)}$ with $c(t) = R_x(u + t \Dot{u})$, $\Dot{u}$ a perturbation to $u$. 
\end{lemma}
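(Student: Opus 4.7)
The plan is to derive both identities by treating $\hat{F}_x$ as a function on the vector space $T_x\mathcal{M}$ so that its gradient and Hessian are ordinary first and second derivatives, then interpret the chain-rule terms intrinsically using the definitions of $\mathrm{grad}F$, $\mathrm{Hess}\,F$, and the adjoint $T_u^*$.

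For the gradient, I would just apply the chain rule to $\hat{F}_x = F \circ R_x$. For any $\dot u \in T_x\mathcal{M}$,
\begin{equation*}
\mathrm{D}\hat{F}_x(u)[\dot u] \;=\; \mathrm{D}F(R_x(u))\bigl[\mathrm{D}R_x(u)[\dot u]\bigr] \;=\; \langle \mathrm{grad}F(R_x(u)),\, T_u \dot u \rangle_{R_x(u)},
\end{equation*}
where I used the defining property of the Riemannian gradient and $\mathrm{D}R_x(u) = T_u$. Passing to the adjoint with respect to $\langle\cdot,\cdot\rangle_x$ on $T_x\mathcal{M}$ gives $\langle T_u^* \mathrm{grad}F(R_x(u)), \dot u \rangle_x$, and since $T_x\mathcal{M}$ is a Euclidean inner-product space this identifies $\nabla \hat{F}_x(u) = T_u^* \mathrm{grad}F(R_x(u))$.

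For the Hessian, I would fix $\dot u \in T_x\mathcal{M}$, introduce the curve $c(t) = R_x(u + t\dot u)$ in $\mathcal{M}$, and look at $g(t) := \hat{F}_x(u + t \dot u) = F(c(t))$. Since $\hat{F}_x$ lives on a vector space, $\langle \nabla^2 \hat{F}_x(u)[\dot u], \dot u\rangle_x = g''(0)$. A first differentiation yields $g'(t) = \langle \mathrm{grad}F(c(t)), c'(t)\rangle_{c(t)}$, and differentiating again using the compatibility of the Levi-Civita connection with the metric gives
\begin{equation*}
g''(t) \;=\; \bigl\langle \mathrm{Hess}\,F(c(t))[c'(t)],\, c'(t)\bigr\rangle_{c(t)} \;+\; \bigl\langle \mathrm{grad}F(c(t)),\, \tfrac{D}{dt}c'(t) \bigr\rangle_{c(t)}.
\end{equation*}
Evaluating at $t=0$, and using $c'(0) = \mathrm{D}R_x(u)[\dot u] = T_u\dot u$ together with the interpretation of $c''(0)$ as the covariant acceleration $\tfrac{D}{dt}c'(0) \in T_{R_x(u)}\mathcal{M}$, I recover
\begin{equation*}
\langle \nabla^2 \hat{F}_x(u)[\dot u], \dot u \rangle_x \;=\; \langle T_u^* \,\mathrm{Hess}\,F(R_x(u))\, T_u[\dot u], \dot u\rangle_x \;+\; \langle \mathrm{grad}F(R_x(u)), c''(0)\rangle_{R_x(u)},
\end{equation*}
matching the claim once we define $W_u$ via the second term.

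The only real subtlety is showing that $W_u$ is a well-defined symmetric \emph{linear} operator on $T_x\mathcal{M}$. The displayed quadratic form above determines at most a symmetric bilinear form via polarization, and since $\nabla^2\hat{F}_x(u)$ and $T_u^*\,\mathrm{Hess}\,F(R_x(u))\,T_u$ are both symmetric linear operators on $T_x\mathcal{M}$, their difference $W_u$ is automatically symmetric and linear; the formula $\langle W_u[\dot u],\dot u\rangle_x = \langle \mathrm{grad}F(R_x(u)), c''(0)\rangle_{R_x(u)}$ then extends to a bilinear expression by polarization. This is the main bookkeeping point, but no deep computation is required beyond the chain rule and connection compatibility.
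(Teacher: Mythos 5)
Your derivation is correct, and it is the standard one. Note that the paper does not actually prove Lemma~\ref{define_grad_hess_pullback}; it states it with a citation to Lemma~2.5 of \citet{CriscitielloAFOM2020}, whose proof is the same chain-rule-plus-second-derivative-along-a-curve argument you give. The only self-contained proof in the paper is of the special case $u = 0$ under a second-order retraction (Lemma~\ref{grad_hess_pullback_second_order} in Appendix~\ref{app_regularity_condition}), and there the route is slightly different: rather than differentiating $g(t) = F(c(t))$ twice, the paper writes a second-order Taylor expansion of $f$ along the retraction curve and compares it term-by-term with the Euclidean Taylor expansion of $\hat{f}_x$ on $T_x\mathcal{M}$. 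That Taylor-comparison presentation is essentially your computation repackaged — matching quadratic coefficients is the same as evaluating $g''(0)$ — but it only produces the $u=0$ statement and relies on $T_0 = \mathrm{id}$ and $c''(0)=0$ to suppress the extra terms. Your version works at arbitrary $u$, makes the roles of $T_u$ and $T_u^*$ explicit, and your closing observation — that $W_u := \nabla^2\hat{F}_x(u) - T_u^*\,\mathrm{Hess}\,F(R_x(u))\,T_u$ is automatically a well-defined symmetric linear operator, so that the quadratic-form identity plus polarization pins it down — is exactly the right way to settle well-definedness, a point the paper does not discuss since it defers the proof.
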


If the retraction $R_x$ is a second-order retraction, $\nabla \hat{F}_x(0)$ and $\nabla^2 \hat{F}_x(0)$ match the Riemannian gradient and Hessian at $x$. See Lemma \ref{grad_hess_pullback_second_order} (Appendix \ref{app_regularity_condition}) for more details. Vector transport $\mathcal{T}$ with respect to retraction $R_x$ is a linear map that $\mathcal{T}: T\mathcal{M} \,\oplus\, T\mathcal{M} \xrightarrow[]{} T\mathcal{M}$ satisfies (i) $\mathcal{T}_{u}[\xi] \in T_{R_x(u)}\mathcal{M}$; (ii) $\mathcal{T}_{0_x} [\xi] = \xi$. In this paper, we only consider isometric vector transport, which satisfies $\langle \xi, v \rangle_x = \langle \mathcal{T}_{u} \xi , \mathcal{T}_{u} v \rangle_{R_x(u)}$. Below are some common notations used throughout this paper.

\textbf{Notation.} Denote $\lambda_{\min}(H)$ as the minimum eigenvalue of a symmetric operator $H$ and use $\| \cdot \|$ to represent either the vector norm or the spectral norm for a matrix. We claim $h(t) = \mathcal{O}(g(t))$ if there exists a positive constant $M$ and $t_0$ such that $h(t) \leq Mg(t)$ for all $t \geq t_0$ and use $\widetilde{\mathcal{O}}(\cdot)$ to hide poly-logarithmic factors. Let $\mathbb{B}_x(r)$ be the Euclidean ball with radius $r$ to the origin of the tangent space $T_x\mathcal{M}$. That is, $\mathbb{B}_x(r) := \{ u \in T_x\mathcal{M} : \| u\|_x \leq r \}$. We use $\text{Unif}(\mathbb{B}_x(r))$ to denote the uniform distribution on the set $\mathbb{B}_x(r)$. Denote $[n] := \{1,2,..,n\}$ and let $\nabla \hat{f}_{\mathcal{I}, x}(u) = \frac{1}{|\mathcal{I}|} \sum_{i \in \mathcal{I}} \nabla \hat{f}_{i, x}(u)$ be a mini-batch gradient of the pullback component function where $\mathcal{I} \subseteq [n]$ with cardinality $|\mathcal{I}|$. Similarly, $\text{grad}f_{\mathcal{I}}(x)$ represents a mini-batch Riemannian gradient. Finally, we denote $\log(x)$ as the natural logarithm and $\log_\alpha(x)$ as logarithm with base $\alpha$.

\subsection{Assumptions}
Now we state the main assumptions as follows. The first assumption is required to bound function decrease and to ensure existence of stationary points on the manifold.

\begin{assumption}[Lower bounded function]
\label{bounded_function_assump}
    $F$ is lower bounded by $F^*$ with $F(x) \geq F^*$ for all $x \in \mathcal{M}$.  
\end{assumption}

Following \citep{CriscitielloEESPManifold2019}, we need to impose some Lipschitzness conditions on the pullback $\hat{F}_x$ because the main gradient steps (see Algorithm \ref{TSSRG_algorithm}) are performed on tangent space with respect to the pullback function. The next assumption requires both gradient and Hessian Lipschitzness of the pullback component functions $\hat{f}_{i,x}$. Note that we only require the condition to satisfy with respect to the origin of $T_x\mathcal{M}$ within a constraint ball $\mathbb{B}_x(D)$. This assumption is much weaker compared to requiring Lipschitz continuity over entire tangent space. 

\begin{assumption}[Gradient and Hessian Lipschitz]
\label{grad_hess_lip_asssump}
    Gradient and Hessian of the pullback component function is Lipschitz to the origin. That is, for all $x \in \mathcal{M}, u \in T_x\mathcal{M}$ with $\|u \| \leq D$, there exist $\ell, \rho \geq 0$ such that
    \begin{align*}
        \| \nabla \hat{f}_{i, x}(u) -  \nabla \hat{f}_{i, x}(0)  \| &\leq \ell\| u\|,\\
        \| \nabla^2 \hat{f}_{i, x}(u) -  \nabla^2 \hat{f}_{i, x}(0) \| &\leq \rho \| u \|.
    \end{align*}
    This also suggests the gradient and Hessian of the pullback function $\hat{F}_x$ satisfy the same Lipschitzness condition to the origin.
\end{assumption}

Immediately based on this assumption, we can show (in Lemma \ref{L_lipschit_assump}) that gradient of the pullback function is Lipschitz continuous in the constraint ball $\mathbb{B}_x(D)$. This result implies smoothness of the pullback function and is fundamental for analysing first-order optimization algorithms.

\begin{lemma}[$L$-Lipschitz continuous]
\label{L_lipschit_assump}
    Under Assumption \ref{grad_hess_lip_asssump}, for all $x \in \mathcal{M}$, there exists $L = \ell + \rho D$ such that $\nabla \hat{f}_{i, x}$ is $L$-Lipschitz continuous inside the ball $\mathbb{B}_x(D)$. This also implies that $\nabla \hat{F}_{x}$ is $L$-Lipschitz continuous. That is, for any $u, v \in \mathbb{B}_x(D)$, we have 
    \begin{equation*}
        \| \nabla \hat{f}_{i, x}(u) - \nabla \hat{f}_{i, x} (v)  \| \leq L \| u - v \| \quad \emph{ and } \quad \|\nabla \hat{F}_{x}(u) - \nabla \hat{F}_{ x} (v) \| \leq L \| u - v \|.
    \end{equation*}
\end{lemma}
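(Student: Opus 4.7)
The plan is to obtain Lipschitzness from the pointwise-to-origin conditions in Assumption 2 by first controlling the Hessian along the segment joining $u$ and $v$, and then applying the fundamental theorem of calculus. Since the Euclidean ball $\mathbb{B}_x(D)$ is convex, the segment $\gamma(t) = v + t(u-v)$ stays inside $\mathbb{B}_x(D)$ for $t \in [0,1]$, so Assumption 2 is applicable at every point on the segment.

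First I would bound the operator norm of $\nabla^2 \hat f_{i,x}(0)$. From the first inequality of Assumption~\ref{grad_hess_lip_asssump}, for any unit $\dot u \in T_x\mathcal{M}$,
\begin{equation*}
\bigl\| \nabla^2 \hat f_{i,x}(0)[\dot u] \bigr\|
= \lim_{t\to 0^+} \frac{\bigl\| \nabla \hat f_{i,x}(t\dot u) - \nabla \hat f_{i,x}(0) \bigr\|}{t}
\le \ell,
\end{equation*}
so $\|\nabla^2 \hat f_{i,x}(0)\| \le \ell$. Combined with the second inequality of Assumption~\ref{grad_hess_lip_asssump}, for any $w \in \mathbb{B}_x(D)$,
\begin{equation*}
\bigl\| \nabla^2 \hat f_{i,x}(w) \bigr\| \le \bigl\| \nabla^2 \hat f_{i,x}(0) \bigr\| + \bigl\| \nabla^2 \hat f_{i,x}(w) - \nabla^2 \hat f_{i,x}(0) \bigr\| \le \ell + \rho \|w\| \le \ell + \rho D = L.
\end{equation*}

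Next I would apply the fundamental theorem of calculus along $\gamma(t)$. Since $\hat f_{i,x}$ is defined on a vector space and is twice continuously differentiable,
\begin{equation*}
\nabla \hat f_{i,x}(u) - \nabla \hat f_{i,x}(v)
= \int_0^1 \nabla^2 \hat f_{i,x}\bigl(v + t(u-v)\bigr)[u-v]\, dt.
\end{equation*}
Taking norms, using the uniform bound from the previous step, and convexity of $\mathbb{B}_x(D)$,
\begin{equation*}
\bigl\| \nabla \hat f_{i,x}(u) - \nabla \hat f_{i,x}(v) \bigr\|
\le \int_0^1 \bigl\| \nabla^2 \hat f_{i,x}(\gamma(t)) \bigr\| \,\|u-v\|\, dt
\le L\,\|u-v\|.
\end{equation*}

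Finally, since $\hat F_x = \frac{1}{n}\sum_{i=1}^n \hat f_{i,x}$, linearity of the gradient together with the triangle inequality immediately yields the same $L$-Lipschitz estimate for $\nabla \hat F_x$ on $\mathbb{B}_x(D)$. There is no real obstacle here beyond verifying that the segment stays in the constraint ball (which is immediate from convexity) and that the pointwise bound on $\nabla^2 \hat f_{i,x}(0)$ can be deduced from the first Lipschitz condition; the rest is routine calculus.
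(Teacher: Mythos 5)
Your proof is correct and takes essentially the same route as the paper: bound $\|\nabla^2\hat f_{i,x}(0)\|\le\ell$, add the Hessian-Lipschitz bound to get $\|\nabla^2\hat f_{i,x}(w)\|\le L$ on the ball, integrate along the segment, and pass to the average for $\nabla\hat F_x$. The only (welcome) extra detail you supply is the explicit limit argument deriving $\|\nabla^2\hat f_{i,x}(0)\|\le\ell$ from the gradient-Lipschitz condition, a step the paper asserts without proof.
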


The next assumption is to ensure the correspondence between gradient and Hessian of the pullback $\hat{F}_x$ at origin, and those of original function $F$. Note similar to \citep{CriscitielloEESPManifold2019}, we can relax this assumption to bounded initial acceleration, i.e. $\| c''(t)\| \leq \beta$ for some $\beta \geq 0$. In that case, results only differ by some constants.

\begin{assumption}[Second-order retraction]
\label{second_order_retraction_assump}
    $R_x$ is a second-order retraction such that for all $(x, u) \in T\mathcal{M}$, the retraction curve $c(t) = R_x(tu) $ has zero initial acceleration. That is $c''(t) = 0$. 
\end{assumption}

The following assumption is needed to bound the difference between differentiated retraction and vector transport. Although our algorithm does not require vector transport as all updates are completed on tangent space, the main purpose of this assumption is to establish a bound on singular value of differentiated retraction (Lemma \ref{singular_value_bound}). The Lemma can then be used to bound the difference between $\nabla \hat{F}_x(u)$ and $\text{grad}F(R_x(u))$ for \textit{any} $u \in \mathbb{B}_x(D)$. 

\begin{assumption}[Difference between differentiated retraction and vector transport]
\label{diff_vector_diff_assump}
    For any $\Bar{x} \in \mathcal{M}$, there exists a neighbourhood $\mathcal{X}$ of $\Bar{x}$ such that for all $x, y = R_x(u) \in \mathcal{X}$, there exists a constant $c_0 \geq 0$ uniformly,
    \begin{equation*}
        \| T_u - \mathcal{T}_u \| \leq c_0 \| u \| \quad \emph{ and } \quad \| T_u^{-1} - \mathcal{T}_u^{-1}\| \leq c_0 \|u\|.
    \end{equation*}
\end{assumption} 

\begin{lemma}[Singular value bound of differentiated retraction]
\label{singular_value_bound}
    For all $x, y = R_x(u) \in \mathcal{X}$ where $ u \in \mathbb{B}_x(D)$ with $D \leq \frac{1}{2 c_0}$, we have $\sigma_{\min}(T_u) \geq \frac{1}{2}$.
\end{lemma}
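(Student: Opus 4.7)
The plan is to estimate the smallest singular value directly from its variational definition $\sigma_{\min}(T_u) = \inf_{v \in T_x\mathcal{M},\, \|v\|_x = 1} \|T_u v\|_{R_x(u)}$, and to use the isometric vector transport $\mathcal{T}_u$ as a reference point that we already understand completely.

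First I would invoke the isometry of $\mathcal{T}_u$: by definition, $\langle v, v\rangle_x = \langle \mathcal{T}_u v, \mathcal{T}_u v\rangle_{R_x(u)}$, so $\|\mathcal{T}_u v\|_{R_x(u)} = \|v\|_x$ for every $v \in T_x\mathcal{M}$. Next I would apply the reverse triangle inequality together with the first half of Assumption \ref{diff_vector_diff_assump}:
\begin{equation*}
    \|T_u v\|_{R_x(u)} \;\geq\; \|\mathcal{T}_u v\|_{R_x(u)} - \|(T_u - \mathcal{T}_u) v\|_{R_x(u)} \;\geq\; \|v\|_x - c_0 \|u\|_x \|v\|_x \;=\; (1 - c_0 \|u\|_x)\|v\|_x.
\end{equation*}
Taking the infimum over unit-norm $v$ yields $\sigma_{\min}(T_u) \geq 1 - c_0\|u\|_x$. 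Finally, since $u \in \mathbb{B}_x(D)$ with $D \leq 1/(2 c_0)$, we have $c_0 \|u\|_x \leq c_0 D \leq 1/2$, and therefore $\sigma_{\min}(T_u) \geq 1/2$, as claimed.

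There is no real obstacle here: the proof is a one-line reverse-triangle-inequality argument, and the only subtlety is remembering that the norms on the left and right of the inequality live on different tangent spaces, which is precisely handled by the isometry of $\mathcal{T}_u$. The second part of Assumption \ref{diff_vector_diff_assump} (on $T_u^{-1} - \mathcal{T}_u^{-1}$) is not needed for this particular lemma; it will be used elsewhere in the regularity analysis.
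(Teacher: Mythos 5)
Your proof is correct and follows essentially the same approach as the paper: both bound $\sigma_{\min}(T_u)$ via the variational characterization and the reverse triangle inequality, using the isometry of $\mathcal{T}_u$ together with $\|T_u - \mathcal{T}_u\| \leq c_0\|u\| \leq 1/2$ from Assumption \ref{diff_vector_diff_assump}. Your added remark about the two tangent spaces is a helpful clarification but does not change the argument.
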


It is not difficult to satisfy these assumptions. For compact Riemannian manifolds with a second-order retraction and a three-times continnously differentiable function $F$, Assumptions \ref{bounded_function_assump}, \ref{grad_hess_lip_asssump} and \ref{second_order_retraction_assump} are easily satisfied (see Lemma 3.2 in \citep{CriscitielloEESPManifold2019}). Assumption \ref{diff_vector_diff_assump} can be guaranteed by requiring vector transport to be $C^0$ based on Taylor expansion \citep{HuangBFGS2015}. 

Apart from the main assumptions, one additional assumption of bounded variance is particularly important for solving online problems. 
\begin{assumption}[Uniform bounded variance]
\label{bounded_variance_assump}
Variance of gradient of the pullback component function is bounded uniformly by $\sigma^2$. That is, for all $x \in \mathcal{M}$ and $\hat{f}_{i,x}$, 
\begin{equation*}
    \| \nabla \hat{f}_{i,x}(u) - \nabla \hat{F}_x(u) \|^2 \leq \sigma^2
\end{equation*}
holds for any $u \in T_x\mathcal{M}$ such that $\| u\| \leq D$. 
\end{assumption}
This assumption is more stringent than the standard variance bound, which is in expectation. However, we can relax this assumption by requiring sub-Gaussian tails, which is sufficient to achieve a high probability bound \citep{LiSSRGD2019}. Lastly, we conclude this section by defining second-order critical points as follows. 

\begin{definition}[$(\epsilon, \delta)$-second-order and $\epsilon$-second-order critical points]
\label{epsilon_second_order_point}
    $x \in \mathcal{M}$ is an $(\epsilon, \delta)$-second-order critical point if 
    $$\|\emph{grad}F(x) \| \leq \epsilon, \quad \emph{ and } \quad \lambda_{\min} (\emph{Hess} F(x)) \geq -\delta. $$
    It is an $\epsilon$-second-order critical point if $\| \emph{grad}F(x) \| \leq \epsilon$ and $\lambda_{\min} (\emph{Hess}F(x)) \geq -\sqrt{\rho\epsilon}$. The second definition is a special case of the first one with $\delta = \sqrt{\rho \epsilon}$. 
\end{definition}

\section{Algorithm}

In this section, we introduce perturbed Riemannian stochastic recursive gradient method (\textsf{PRSRG}) in Algorithm \ref{PRSRG_algorithm} where the main updates are performed by tangent space stochastic recursive gradient (\textsf{TSSRG}) in Algorithm \ref{TSSRG_algorithm}. The key idea is simple: when gradient is large, we repeatedly execute $m$ standard recursive gradient iterations (i.e. an epoch) on tangent space before retracting back to the manifold. Essentially, we are minimizing the pullback function $\hat{F}_x$ within the constraint ball by SRG updates, which translates to minimizing $F$ within a neighbourhood of $x$. 

This process repeats until gradient is small. Then we perform the same SRG updates but at most $\mathscr{T}$ times, from a perturbed iterate with isotropic noise added on the tangent space. Notice that the small gradient condition in Line \ref{line_check_smallgrad} in Algorithm \ref{PRSRG_algorithm} is examined against $\text{grad}f_{\mathcal{B}}(x)$, where $\mathcal{B}$ contains $B$ samples drawn without replacement from $[n]$. This is to ensure that full batch gradient is computed under finite-sum setting where we choose $B = n$. Under online setting when $n$ approaches infinity, access to full gradient is unavailable and therefore we can only rely on the large-batch gradient.

\textsf{TSSRG} is mainly based on stochastic recursive gradient algorithm in \citep{NguyenSRG2017}. The algorithm adopts a double-loop structure. At the start of each outer loop (called an epoch), a full gradient (or a large-batch gradient for online optimization) is evaluated. Within each inner loop, mini-batch gradients are computed for current iterate $u_t$ and its last iterate $u_{t-1}$. Then a modified gradient $v_t$ at $u_t$ is constructed recursively based on the difference between $v_{t-1}$ and mini-batch gradient at $u_{t-1}$. That is, 
\begin{equation}
    v_t \xleftarrow{} \nabla \hat{f}_{\mathcal{I}, x}(u_{t}) - \big( \nabla \hat{f}_{\mathcal{I}, x}(u_{t-1}) - v_{t-1} \big). \label{SRG_update_tangent}
\end{equation}

\begin{algorithm}[!t]
 \caption{Perturbed Riemannian stochastic recursive gradient (\textsf{PRSRG})}
 \label{PRSRG_algorithm}
 \begin{algorithmic}[1]
  \STATE \textbf{Input:} Initialization $x_0$, step size $\eta$, inner loop size $m$, mini-batch size $b$, large batch size $B$, perturbation radius $r$, perturbation interval $\mathscr{T}$, diameter bound $D$, tolerance $\epsilon$
  \FOR{$t = 0,1,2, ...$}
  \IF[{$\mathcal{B}$ contains $B$ samples drawn randomly without replacement}]{$\| \text{grad}f_{\mathcal{B}}(x_{t}) \| \leq \epsilon$} \label{line_check_smallgrad}
      \STATE Draw $u_0 \sim \text{Unif}(\mathbb{B}_{x_{t}}(r))$.
      \STATE $x_{t + \mathscr{T}} = \textsf{TSSRG}(x_t, u_0, \eta, m, b, B, D, \mathscr{T})$.
      \STATE $t \xleftarrow[]{} t + \mathscr{T}$.
  \ELSE
  \STATE $x_{t+m} = \textsf{TSSRG}(x_t, 0, \eta, m, b, B, D, m)$.
  \STATE $t \xleftarrow[]{} t+ m$. 
  \ENDIF
  \ENDFOR
 \end{algorithmic} 
\end{algorithm}

\begin{algorithm}[!t]
 \caption{Tangent space stochastic recursive gradient \big(\textsf{TSSRG}($x, u_0, \eta, m,  b, B, D, \mathscr{T}$)\big)}
 \label{TSSRG_algorithm}
 \begin{algorithmic}[1]
  \STATE \textbf{Input:} Initialization $x$, initial perturbation $u_0$, step size $\eta$, inner loop size $m$, mini-batch size $b$, large batch size $B$, diameter bound $D$, max iteration $\mathscr{T}$
  \STATE If $u_0 = 0$ set $\textsf{perturbed} = 0$; else, set $\textsf{perturbed} = 1$.
  \FOR{$s = 0, 1, ...$} 
  \STATE $v_{sm} = \nabla \hat{f}_{\mathcal{B}, x}(u_{sm})$ 
  \FOR{$k = 1, 2, ..., m$}
  \STATE $t \xleftarrow{} sm + k$
  \STATE $u_{t} \xleftarrow[]{} u_{t-1} - \eta v_{t-1}$
  \IF{$\| u_t\| \geq D$}
  \STATE \textbf{break} with $u_{\mathscr{T}} \xleftarrow[]{} u_{t-1} - \alpha\eta v_{t-1}$, where $\alpha \in (0,1]$ such that $\|u_{t-1} - \alpha\eta v_{t-1} \| = D.$ \label{leave_ball_ensure}
  \ENDIF
  \STATE $v_t \xleftarrow{} \nabla \hat{f}_{\mathcal{I}, x}(u_{t}) - \nabla \hat{f}_{\mathcal{I}, x}(u_{t-1}) + v_{t-1}$ \hfill $\triangleright$ {$\mathcal{I}$ contains i.i.d. $b$ samples}
  \IF{(\textbf{not} \textsf{perturbed}) and $t < \mathscr{T}$} 
  \item \textbf{break} with $u_\mathscr{T} \xleftarrow{} u_t$ with probability $\frac{1}{m-k+1}$ \label{breaking_rules_tssrg}
  \ENDIF
  \IF{$t \geq \mathscr{T}$}
  \STATE \textbf{break} with $u_\mathscr{T} \xleftarrow{} u_t$
  \ENDIF
  \ENDFOR
  \STATE $u_{(s+1)m} \xleftarrow[]{} u_t$. 
  \ENDFOR
  \RETURN $R_x(u_\mathscr{T})$
 \end{algorithmic} 
\end{algorithm}

Note that we do not require any vector transporter since all gradients (of the pullback) are defined on the same tangent space. Hence, \textsf{TSSRG} is very similar to Euclidean SRG with only differences in stopping criteria, discussed as follows. When gradient is large, we perform at most $m$ updates and break the loop with probability $\frac{1}{m-k+1}$ where $k$ is the index of inner iteration (Line \ref{breaking_rules_tssrg}, Algorithm \ref{TSSRG_algorithm}). This stopping rule is equivalent to uniformly selecting from iterates within this epoch at random as the output. This is to ensure that either small gradient condition is triggered or sufficient decrease is achieved. More details can be found in Section \ref{main_proof_idea_section}. When gradient is small (i.e. around saddle point), we only break until max iteration has been reached.

Finally, we note that the Lipschitzness conditions are only guaranteed within a constraint ball of radius $D$ while taking $\mathscr{T}$ updates on tangent space can violate this requirement. Therefore, in Line \ref{leave_ball_ensure} (Algorithm \ref{TSSRG_algorithm}), we explicitly control the deviation of iterates from the origin and break the loop as soon as one iterate leaves the ball. Then we return some points on the boundary of the ball. By carefully balancing the parameters, we can show that whenever iterates escape the constraint ball, function value already has a large decrease. 

To simplify notations,  for the most time, we refer to Algorithm \ref{TSSRG_algorithm} as \textsf{TSSRG}($x, u_0, \mathscr{T}$).

\section{Main results}
In this section, we present the main complexity results of {PRSRG} in finding second-order stationary points. 

Under finite-sum setting, we choose $B = n$ and thus $\text{grad}f_{\mathcal{B}} \equiv \text{grad}F$. We set the parameters as in \eqref{parameter_set_finite} while we also require first-order tolerance $\epsilon \leq \widetilde{\mathcal{O}}(\frac{D\sqrt{L}}{m\sqrt{\eta}})$. This is not a strict condition given step size $\eta$ can be chosen arbitrarily small. The second-order tolerance $\delta$ needs to be smaller than the Lipschitz constant $\ell$ in Assumption \ref{grad_hess_lip_asssump}. Otherwise when $\delta > \ell$, any $x \in \mathcal{M}$ with small gradient $\| \text{grad}F(x)\| \leq \epsilon$ is already a $(\epsilon, \delta)$-second-order stationary point because by $\|\lambda_{\min}(\nabla \hat{F}_x(0)) \| \leq \ell$ from Assumption \ref{grad_hess_lip_asssump}, we have $\lambda_{\min}(\text{Hess}F(x)) = \lambda_{\min}(\nabla \hat{F}_x(0)) \geq - \ell \geq -\delta$.

\begin{theorem}[Finite-sum complexity]
\label{theorem_finite_sum}
Under Assumptions \ref{bounded_function_assump} to \ref{diff_vector_diff_assump}, consider finite-sum optimization setting. For any starting point $x_0 \in \mathcal{M}$ with the choice of parameters as
\begin{equation}
    \mathscr{T} = \widetilde{\mathcal{O}}\Big(\frac{1}{\delta} \Big), \quad \eta \leq \widetilde{\mathcal{O}}\Big(\frac{1}{L} \Big), \quad m = b = \sqrt{n}, \quad B = n, \quad r = \widetilde{\mathcal{O}} \Big(\min \Big\{ \frac{\delta^3}{\rho^2\epsilon}, \sqrt{\frac{\delta^3}{\rho^2L}} \Big\} \Big), \label{parameter_set_finite}
\end{equation}
suppose $\epsilon, \delta$ satisfy $\epsilon \leq \widetilde{\mathcal{O}}(\frac{D \sqrt{L}}{m \sqrt{\eta}}), \, \delta \leq \ell$ where $\ell \leq L$. Then with high probability, \textsf{PRSRG}$(x_0, \eta, m,$ $b, B, r, \mathscr{T}, D, \epsilon)$ will at least once visit an $(\epsilon, \delta)$-second-order critical point with
\begin{equation*}
    \widetilde{\mathcal{O}}\Big( \frac{\Delta L \sqrt{n}}{\epsilon^2} + \frac{\Delta \rho^2 \sqrt{n} }{\delta^4} + \frac{\Delta \rho^2 n}{\delta^3}\Big)
\end{equation*}
stochastic gradient queries, where $\Delta := F(x_0) - F^*$.
\end{theorem}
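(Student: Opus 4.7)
The plan is to partition the sequence of TSSRG calls issued by PRSRG into ``large-gradient'' epochs (when the test in Line~\ref{line_check_smallgrad} of Algorithm~\ref{PRSRG_algorithm} fails) and ``perturbed'' epochs (when it passes), prove a per-call sufficient-decrease lemma in each regime, and telescope against $F(x_0)-F^*\leq\Delta$ to bound the total number of epochs. Throughout, Assumption~\ref{second_order_retraction_assump} identifies $\nabla\hat F_{x_t}(0)$ and $\nabla^2\hat F_{x_t}(0)$ with $\text{grad}F(x_t)$ and $\text{Hess}F(x_t)$; Lemma~\ref{L_lipschit_assump} supplies $L$-smoothness of the pullback inside $\mathbb{B}_{x_t}(D)$; and the truncation rule in Line~\ref{leave_ball_ensure}, together with $\epsilon\leq\widetilde{\mathcal{O}}(D\sqrt{L}/(m\sqrt\eta))$ and the choice of $r$, guarantees that once an iterate leaves the ball, the function has already dropped by more than the per-call decrease budget and the event can be absorbed into the counting.

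For the large-gradient regime, I would lift the Euclidean SRG analysis of \cite{NguyenSRG2017,LiSSRGD2019} to the pullback. Combining one-step descent $\hat F_{x_t}(u_{k+1})\leq\hat F_{x_t}(u_k)-(\eta/2)\|v_k\|^2+(\eta/2)\|v_k-\nabla\hat F_{x_t}(u_k)\|^2$ with the recursive mini-batch variance bound $\mathbb{E}\|v_k-\nabla\hat F_{x_t}(u_k)\|^2\leq(\eta^2L^2/b)\sum_{j<k}\mathbb{E}\|v_j\|^2$ and the balance $m=b=\sqrt n$, $\eta=\widetilde{\mathcal{O}}(1/L)$ telescopes to
\[
\hat F_{x_t}(u_m)-\hat F_{x_t}(0)\leq-\tfrac{\eta}{4}\sum_{k=0}^{m-1}\|\nabla\hat F_{x_t}(u_k)\|^2.
\]
Because the probability-$1/(m-k+1)$ break rule in Line~\ref{breaking_rules_tssrg} returns a uniformly random iterate $\bar u\in\{u_1,\dots,u_m\}$, this gives $\mathbb{E}\|\nabla\hat F_{x_t}(\bar u)\|^2\leq(4/(\eta m))(F(x_t)-F(x_{t+m}))$. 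The dichotomy is then that either the output already satisfies $\|\text{grad}F(x_{t+m})\|\leq\epsilon$ (triggering the perturbation branch) or the epoch has delivered $\Omega(\eta m\epsilon^2)=\Omega(\sqrt n\,\epsilon^2/L)$ function decrease, at a cost of $B+mb=\mathcal{O}(n)$ stochastic gradient queries.

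For the perturbed regime, if $\lambda_{\min}(\text{Hess}F(x_t))\geq-\delta$ we have already visited an $(\epsilon,\delta)$-second-order critical point and are done. Otherwise I would use the coupling argument of \cite{JinPGD2017,LiSSRGD2019}: fixing the eigenvector $e_1$ of the minimal eigenvalue, consider two coupled TSSRG trajectories from $u_0$ and $u_0'=u_0+r_0 e_1/\sqrt d$ sharing all mini-batch sets. Using $\rho$-Hessian Lipschitzness and the quadratic expansion $\nabla\hat F_{x_t}(u)\approx\text{grad}F(x_t)+\text{Hess}F(x_t)[u]$, the gap $w_k:=u_k-u_k'$ obeys $w_{k+1}\approx(I-\eta\nabla^2\hat F_{x_t}(0))w_k$ and grows like $(1+\eta\delta)^k$ along $e_1$. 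Thus within $\mathscr{T}=\widetilde{\mathcal{O}}(1/\delta)$ steps at least one trajectory leaves an $\Omega(\sqrt{\delta/\rho})$-ball, which by $L$-smoothness corresponds to $\Omega(\delta^3/\rho^2)$ function decrease along that run. The thin ``stuck region'' (pancake) lemma then promotes this into: with probability at least $1/2$ over $u_0\sim\text{Unif}(\mathbb{B}_{x_t}(r))$, the single executed trajectory decreases $F$ by $\Omega(\delta^3/\rho^2)$, while the choice $r=\widetilde{\mathcal{O}}(\min\{\delta^3/(\rho^2\epsilon),\sqrt{\delta^3/(\rho^2 L)}\})$ keeps the cost of the perturbation itself at most $\ell r+L r^2\ll\delta^3/\rho^2$. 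Each perturbed call costs $\lceil\mathscr{T}/m\rceil(B+mb)=\widetilde{\mathcal{O}}(n+\sqrt n/\delta)$ stochastic gradient queries.

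Telescoping yields at most $\widetilde{\mathcal{O}}(\Delta L/(\sqrt n\,\epsilon^2))$ large-gradient epochs and $\widetilde{\mathcal{O}}(\Delta\rho^2/\delta^3)$ perturbed epochs before an $(\epsilon,\delta)$-second-order critical point is visited with high probability; multiplying by the per-call costs above recovers the three stated summands. I expect the main obstacle to be making the Step~3 coupling robust to the nonlinearity of the pullback: the identity $\nabla^2\hat F_{x_t}(u)=\text{Hess}F(x_t)+\mathcal{O}(\rho\|u\|)$ only holds near the origin, so I must keep $\|u_k\|=\mathcal{O}(\sqrt{\delta/\rho})\ll D$ throughout the escape using Assumption~\ref{grad_hess_lip_asssump} and the truncation in Line~\ref{leave_ball_ensure}, and argue that an early truncation does not destroy the probability-$1/2$ escape bound. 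A secondary challenge is controlling the stochastic variance of SRG so that it does not swamp the $\Omega(\delta^3/\rho^2)$ drop; this is managed by the $B=n$ reset every $m=\sqrt n$ iterations together with the balance $b=m$, which keeps the cumulative mini-batch error within a single epoch below the quadratic term in the descent inequality.
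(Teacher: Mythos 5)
Your roadmap matches the paper's at the level of structure (large-gradient vs.\ around-saddle phases, sufficient decrease per epoch, telescoping against $\Delta$), and the cost accounting $N_1(B+mb)$, $N_3(\lceil\mathscr{T}/m\rceil B+\mathscr{T}b)$ is correct. However, there is a genuine gap in the large-gradient phase. You control the SRG variance through the \emph{in-expectation} recursion $\mathbb{E}\|v_k-\nabla\hat F_{x_t}(u_k)\|^2\le(\eta^2 L^2/b)\sum_{j<k}\mathbb{E}\|v_j\|^2$ and then telescope as if the descent inequality $\hat F_{x_t}(u_m)-\hat F_{x_t}(0)\le-\frac{\eta}{4}\sum_k\|\nabla\hat F_{x_t}(u_k)\|^2$ holds pointwise. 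It does not: an expectation bound does not yield the ``with high probability'' per-epoch dichotomy you invoke, nor the theorem's high-probability conclusion. The paper replaces this step with a high-probability bound on the estimation error $\|v_t-\nabla\hat F_x(u_t)\|$ (Lemma~\ref{hpBound_estimation_error}) obtained by treating $y_t=v_t-\nabla\hat F_x(u_t)$ as a vector martingale, bounding its increments via vector Bernstein (Lemma~\ref{bernsteinlemma}), and then applying Azuma--Hoeffding (Lemma~\ref{azuma_hoeffding_lemma}). That concentration step is the key technical ingredient your proposal is missing; without it the descent inequality and the ``improve-or-localize'' bound (Lemma~\ref{improve_localize_lemma}), which you also need to absorb the ball-exit case, are only in expectation and cannot be fed into a union-bound/telescoping argument.

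A second, smaller gap: your dichotomy ``either $\|\mathrm{grad}F(x_{t+m})\|\le\epsilon$ or the epoch delivers $\Omega(\eta m\epsilon^2)$ decrease'' is too strong. Even when at least half the iterates in an epoch have small pullback gradient, the uniformly chosen output has small gradient only with probability $\ge 1/2$; there are genuinely ``wasted'' epochs where neither alternative obtains. The paper explicitly introduces this wasted-epoch category, argues that with high probability at most $\widetilde{\mathcal{O}}(1)$ of them occur consecutively before a useful or descent epoch, and then folds the extra $\widetilde{\mathcal{O}}(1)$ factor into the $N_1,N_2,N_3$ counts. Your telescoping argument needs this case analysis (or an equivalent amortization) to be rigorous. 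The around-saddle sketch is otherwise in line with the paper's coupling/small-stuck-region argument (Lemmas~\ref{small_stuck_region_lemma}--\ref{escape_stuck_region_lemma}), including the use of the localization lemma to rule out early ball exit during the escape.
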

Up to some constants and poly-log factors, the complexity in Theorem \ref{theorem_finite_sum} is exactly the same as that when optimization domain is a vector space (i.e. $\mathcal{M} \equiv \mathbb{R}^d$). Indeed, our result is a direct generalization of the Euclidean counterpart where retraction $R_x(u) = x + u$ and the Lipschitzness conditions are made with respect to $F: \mathbb{R}^d \xrightarrow[]{} \mathbb{R}$. Set the same parameters as in \eqref{parameter_set_finite} except that we do not require $\epsilon$ to be small since $D = + \infty$ and require $\delta \leq L$ given $\ell = L$. Then we can recover the Euclidean perturbed stochastic recursive gradient algorithm \citep{LiSSRGD2019}.

Under online setting, the complexities of stochastic gradient queries are presented below.

\begin{theorem}[Online complexity]
\label{online_complexity_theorem}
Under Assumptions \ref{bounded_function_assump} to \ref{bounded_variance_assump}, consider online optimization setting. For any starting point $x_0 \in \mathcal{M}$ with the choice of parameters 
\begin{equation*}
    \mathscr{T} = \widetilde{\mathcal{O}}\Big(  \frac{1}{\delta} \Big), \quad \eta \leq \widetilde{\mathcal{O}} \Big( \frac{1}{L} \Big), \quad m = b = \widetilde{\mathcal{O}}\Big( \frac{\sigma}{\epsilon} \Big), \quad B = \widetilde{\mathcal{O}}\Big( \frac{\sigma^2}{\epsilon^2} \Big), \quad r = \widetilde{\mathcal{O}} \Big(\min \Big\{ \frac{\delta^3}{\rho^2\epsilon}, \sqrt{\frac{\delta^3}{\rho^2L}} \Big\} \Big),
\end{equation*}
suppose $\epsilon, \delta$ satisfy $\epsilon \leq \min \big\{ \frac{\delta^2}{\rho}, \widetilde{\mathcal{O}}( \frac{D\sqrt{L}}{m\sqrt{\eta}} ) \big\}$, $\delta \leq \ell$ where $\ell \leq L$. Then with high probability, $\textsf{PRSRG}(x_0, \eta, m, b, B, r, \mathscr{T}, D, \epsilon)$ will at least once visit an $(\epsilon, \delta)$-second-order critical point with 
\begin{equation*}
    \widetilde{\mathcal{O}}\Big( \frac{\Delta L \sigma}{\epsilon^3} + \frac{\Delta \rho^2 \sigma^2}{\delta^3 \epsilon^2} + \frac{\Delta \rho^2 \sigma}{\delta^4 \epsilon}\Big)
\end{equation*}
stochastic gradient oracles, where $\Delta := F(x_0) - F^*$. 
\end{theorem}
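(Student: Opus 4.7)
The plan is to mirror the finite-sum proof of Theorem \ref{theorem_finite_sum} and replace the full gradient $\text{grad}F(x)$ by the large-batch gradient $\text{grad}f_{\mathcal{B}}(x)$ wherever it was used, then show that the choice $B = \widetilde{\mathcal{O}}(\sigma^2/\epsilon^2)$ makes the two agree up to $\epsilon/4$ with high probability. Concretely, I would first invoke a vector Bernstein-type concentration under Assumption \ref{bounded_variance_assump} to get $\|\text{grad}f_{\mathcal{B}}(x) - \text{grad}F(x)\| \leq \epsilon/4$ uniformly over all epochs (taking a union bound over the polynomially many epochs needed to reach either a second-order critical point or the stated complexity). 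The same concentration applied to the pullback controls the noise in the epoch initializer $v_{sm} = \nabla \hat{f}_{\mathcal{B},x}(u_{sm})$ inside \textsf{TSSRG}, so that the SRG recursive variance control used in the finite-sum analysis carries over with an extra additive $\epsilon/4$ error term that is absorbed by the parameter choices.

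Second, I would decompose iterations into two phases mirroring the branches of Algorithm \ref{PRSRG_algorithm}: (i) large-gradient epochs where $\|\text{grad}f_{\mathcal{B}}(x_t)\| > \epsilon$, and (ii) candidate-saddle epochs where $\|\text{grad}f_{\mathcal{B}}(x_t)\| \leq \epsilon$. In phase (i), concentration implies $\|\text{grad}F(x_t)\| \geq 3\epsilon/4$, and the same SRG descent lemma used for the finite-sum case (applied to the pullback $\hat{F}_{x_t}$ via Lemma \ref{L_lipschit_assump} and the second-order retraction Assumption \ref{second_order_retraction_assump}) yields an expected function decrease of $\widetilde{\Omega}(\eta m \epsilon^2) = \widetilde{\Omega}(\sigma \epsilon/L)$ per epoch. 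In phase (ii), if $x_t$ is not already an $(\epsilon,\delta)$-second-order critical point then $\lambda_{\min}(\text{Hess}F(x_t)) < -\delta$; the coupling argument on tangent space (perturb two iterates inside $\mathbb{B}_{x_t}(r)$ that differ only along the smallest eigenvector of $\nabla^2 \hat F_{x_t}(0)$, then track the exponential divergence of their \textsf{TSSRG} trajectories) produces a function decrease of $\widetilde{\Omega}(\delta^3/\rho^2)$ with constant probability after $\mathscr{T} = \widetilde{\mathcal{O}}(1/\delta)$ steps.

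Third, I would verify that the \textsf{TSSRG} iterates stay inside the ball $\mathbb{B}_x(D)$ where Assumption \ref{grad_hess_lip_asssump} applies, or else that the early exit in Line \ref{leave_ball_ensure} already forces a large function decrease. The conditions $\epsilon \leq \min\{\delta^2/\rho,\, \widetilde{\mathcal{O}}(D\sqrt{L}/(m\sqrt{\eta}))\}$ and the tuned perturbation radius $r$ are precisely what is needed to bound the total displacement so that Lemma \ref{L_lipschit_assump} stays in force throughout an epoch. A final counting step closes the argument: there can be at most $\widetilde{\mathcal{O}}(\Delta L/(\sigma\epsilon))$ phase-(i) epochs and at most $\widetilde{\mathcal{O}}(\Delta\rho^2/\delta^3)$ phase-(ii) epochs before an $(\epsilon,\delta)$-second-order critical point is visited, costing $B + mb = \widetilde{\mathcal{O}}(\sigma^2/\epsilon^2)$ and $B + \mathscr{T} b = \widetilde{\mathcal{O}}(\sigma^2/\epsilon^2 + \sigma/(\delta\epsilon))$ stochastic gradient queries respectively; summing these two products yields the three terms $\Delta L\sigma/\epsilon^3$, $\Delta\rho^2\sigma^2/(\delta^3\epsilon^2)$, and $\Delta\rho^2\sigma/(\delta^4\epsilon)$.

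The main obstacle is quantifying the cumulative effect of the large-batch noise on both regimes: in the descent phase the SRG recursion only approximately tracks the true pullback gradient, so one must show that the $\epsilon$-scale error injected at every epoch boundary does not overwhelm the expected descent; and in the escape phase the coupling-divergence argument must remain robust to this same $\epsilon$-level perturbation in $v_0$. Both hinge on $B = \widetilde{\mathcal{O}}(\sigma^2/\epsilon^2)$ being large enough to push the gradient-estimation error strictly below the relevant signal scale ($\epsilon$ for descent, $\delta^2/\rho$ for escape, with the assumption $\epsilon \leq \delta^2/\rho$ reconciling the two), combined with a uniform high-probability bound obtained by union bounding over the $\text{poly}(1/\epsilon,1/\delta,\Delta)$ events that arise along the trajectory.
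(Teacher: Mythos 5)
Your proposal follows essentially the same route as the paper's proof: Bernstein-type concentration to control the large-batch gradient error, a decomposition into large-gradient and around-saddle phases with the same per-epoch decrease rates ($\widetilde{\Omega}(\eta m \epsilon^2)$ and $\widetilde{\Omega}(\delta^3/\rho^2)$ respectively), the improve-or-localize argument to handle iterates escaping $\mathbb{B}_x(D)$, and the same epoch-count $\times$ per-epoch-cost accounting yielding the three terms. The one piece of bookkeeping you glossed over but the paper makes explicit is the handling of \emph{wasted epochs} (the sub-case in which, despite small pullback gradients, the randomly selected output does not satisfy $\|\text{grad}f_{\mathcal{B}}(\tilde{x})\|\le\epsilon$): the paper bounds the probability of such an epoch by a constant and argues via independence that at most $\widetilde{\mathcal{O}}(1)$ occur consecutively, contributing the $\widetilde{\mathcal{O}}(1)(B+mb)$ factor in the complexity tally; this is absorbed into your ``final counting step'' but should be stated explicitly for the union bound to be tight.
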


Note the complexities in this paper are analysed in terms of achieving $(\epsilon, \delta)$-second-order stationarity. Some literature prefers to choose $\delta = \sqrt{\rho\epsilon}$ to match the units of gradient and Hessian, following the work in \citep{NesterovCR2006}. In this case, our complexities reduce to $\widetilde{\mathcal{O}}(\frac{n}{\epsilon^{3/2}} + \frac{\sqrt{n}}{\epsilon^2})$ for finite-sum problems and $\widetilde{\mathcal{O}}(\frac{1}{\epsilon^{7/2}})$ for online problems. Compared to the optimal rate of $\mathcal{O}(\frac{\sqrt{n}}{\epsilon^2})$ and $\mathcal{O}(\frac{1}{\epsilon^3})$ in finding first-order stationary points, our rates are sub-optimal even if we ignore the poly-log factors. This nevertheless appears to be a problem for all stochastic variance reduction methods \citep{GePSVRG2019,LiSSRGD2019}.

\section{High-level proof ideas}
\label{main_proof_idea_section}
Now we provide a high-level proof road map of our proposed method in achieving second-order convergence guarantees. The main proof strategies are similar as in \citep{LiSSRGD2019}. However, we need to carefully handle the particularity of manifold geometry as well as manage the unique regularity conditions. We focus on finite-sum problems and only highlight the key differences under online setting. 

\subsection{Finite-sum setting}
We first start by showing how stochastic recursive gradient updates can achieve sublinear convergence in expectation by periodically computing a large-batch gradient. That is, from smoothness of the pullback function, we have 
\begin{align}
    \hat{F}_x(u_t) \leq \hat{F}_x(u_{t-1}) - \frac{\eta}{2} \| \nabla \hat{F}_x(u_{t-1})\|^2 + \frac{\eta}{2} \| v_{t-1} - \nabla \hat{F}_x(u_{t-1}) \|^2 - \big( \frac{1}{2\eta} - \frac{L}{2} \big) \| u_t - u_{t-1} \|^2. \label{finte_sum_key_smmothness}
\end{align}
To achieve first-order stationarity, it is sufficient to bound the variance term in expectation. That is $\mathbb{E}\| v_{t-1} -\nabla \hat{F}_x(u_{t-1}) \|^2 \leq \mathcal{O}(\mathbb{E}\| u_t - u_{t-1}\|)$ (see \cite{NguyenSRG2017}). This suggests the variance is gradually reduced when approaching optimality where gradient is small. Then by carefully choosing the parameters, we can show that for a single epoch, it satisfies that
\begin{equation}
    \mathbb{E}[\hat{F}_x(u_{sm+m}) - \hat{F}_x(u_{sm})] \leq - \frac{\eta}{2} \sum_{j= sm+1}^{sm+m} \mathbb{E}\| \nabla \hat{F}_x(u_{j-1}) \|^2. \label{expectation_convergence}
\end{equation}
Telescoping this result for all epochs and choosing the output uniformly from all iterates at random, we can guarantee that the output is an approximate first-order stationary point. This gives the optimal stochastic gradient complexity of $\mathcal{O}(\frac{\sqrt{n}}{\epsilon^2})$ by choosing $m = n = \sqrt{n}$. 

To achieve second-order stationarity, the algorithm will go through two phases: \textit{large gradients} and \textit{around saddle points}. We present two informal Lemmas corresponding to the two phases respectively, with parameter settings omitted. See Appendix for more details.  

\begin{lemma}\label{large_descent_lemma_informal}
When current iterate has large gradient, i.e. $\| \emph{grad}F(x) \| \geq \epsilon$, running $\textsf{TSSRG}(x, 0, m)$ gives three possible cases:  
    \begin{enumerate}
        \item When the iterates $\{ u_j \}_{j=1}^m$ do not leave the constraint ball $\mathbb{B}_x(D)$:
        \begin{enumerate}
            \item If at least half of the iterates in the epoch satisfy $\| \nabla \hat{F}_x (u_j) \| \leq \epsilon/2$ for $j = 1,...,m$, then with probability at least $1/2$, we have $ \| \emph{grad}F( \textsf{TSSRG}(x, 0, m) )\| \leq \epsilon$. 
            \item Otherwise, with probability at least $1/5$, we have $ F(\textsf{TSSRG}(x, 0, m)) - F(x) \leq - \frac{\eta m\epsilon^2}{32}$. 
        \end{enumerate}
        \item When one of the iterates $\{ u_j \}_{j=1}^m$ leaves the constraint ball $\mathbb{B}_x{(D)}$, with probability at least $1 - \vartheta$, we have $ F(\textsf{TSSRG}(x, 0, m)) - F(x) \leq - \frac{\eta m\epsilon^2}{32}$, where $\vartheta \in (0,1)$ can be made arbitrarily small.
    \end{enumerate}
    No matter which case occurs, $F(\textsf{TSSRG}(x, 0, m)) \leq F(x)$ with high probability.
\end{lemma}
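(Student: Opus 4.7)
The plan is to ground everything in the per-step pullback smoothness descent \eqref{finte_sum_key_smmothness} together with the SARAH/SRG variance recursion. Because each inner epoch of \textsf{TSSRG} begins with the large-batch gradient $v_{sm} = \nabla\hat f_{\mathcal{B},x}(u_{sm})$ at $\mathcal{B}$ of size $B=n$ (so the estimator is initially exact), a short induction on $t$ using mini-batch independence and the $L$-Lipschitzness of $\nabla\hat f_{i,x}$ from Lemma \ref{L_lipschit_assump} yields the standard recursive bound
\begin{equation*}
\mathbb{E}\| v_t - \nabla \hat F_x(u_t)\|^2 \;\le\; \frac{L^2}{b}\sum_{j=sm+1}^{t}\mathbb{E}\| u_j - u_{j-1}\|^2.
\end{equation*}
Summing \eqref{finte_sum_key_smmothness} across an epoch and plugging this in, the choice $\eta \le \widetilde{\mathcal{O}}(1/L)$ together with $b\ge m$ absorbs both the variance and the displacement contributions into the smoothness term and recovers the epoch-level descent \eqref{expectation_convergence}.

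The body of the argument is a case analysis. In Case 1, where all iterates remain inside $\mathbb{B}_x(D)$, the random break rule in Line \ref{breaking_rules_tssrg} is equivalent to sampling the output $u_\mathscr{T}$ uniformly from $\{u_1,\dots,u_m\}$. For subcase 1(a), if at least $m/2$ of the iterates satisfy $\|\nabla\hat F_x(u_j)\|\le \epsilon/2$, then uniform sampling hits such an index with probability at least $1/2$; combining Lemma \ref{define_grad_hess_pullback} (which gives $\text{grad}F(R_x(u_\mathscr{T})) = (T_{u_\mathscr{T}}^*)^{-1}\nabla\hat F_x(u_\mathscr{T})$) with the bound $\sigma_{\min}(T_{u_\mathscr{T}})\ge 1/2$ from Lemma \ref{singular_value_bound} upgrades the pullback bound to $\|\text{grad}F(R_x(u_\mathscr{T}))\| \le 2\cdot \epsilon/2 = \epsilon$. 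For subcase 1(b), the complementary event forces at least half of the $\|\nabla\hat F_x(u_j)\|$'s to exceed $\epsilon/2$, so the epoch descent \eqref{expectation_convergence} produces an expected decrease of at least $\eta m\epsilon^2/16$. To convert this into a constant-probability statement I would run a Markov-type argument against the deterministic upper envelope $\hat F_x(u_j) - \hat F_x(0)\le LD^2/2$ that holds inside the ball via Lemma \ref{L_lipschit_assump}; balancing the resulting constants yields the claimed probability of at least $1/5$ for a drop of $\eta m\epsilon^2/32$.

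Case 2 handles escape from the ball. Because $\sum_j \|u_j - u_{j-1}\|\ge D$ by the triangle inequality applied to $u_0=0$ and a point with norm $\ge D$, Cauchy--Schwarz gives $\sum_j \|u_j - u_{j-1}\|^2 \ge D^2/m$, so the term $-(\tfrac{1}{2\eta}-\tfrac{L}{2})\sum \|u_j-u_{j-1}\|^2$ in the telescoped \eqref{finte_sum_key_smmothness} already contributes a deterministic decrement of order $D^2/(\eta m)$, which dominates $\eta m\epsilon^2/32$ under the condition $\epsilon \le \widetilde{\mathcal{O}}(D\sqrt{L}/(m\sqrt{\eta}))$ assumed in Theorem \ref{theorem_finite_sum}. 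The main obstacle, and the place I expect to spend the most effort, is turning this into a $1-\vartheta$ high-probability bound: the random variance term $\tfrac{\eta}{2}\sum \|v_{j-1}-\nabla\hat F_x(u_{j-1})\|^2$ can in principle spoil the deterministic gain on low-probability sample paths, and handling it requires a Freedman/Azuma-type concentration for the SARAH error martingale, exploiting the a priori bound $\|u_j - u_{j-1}\|=\eta\|v_{j-1}\|$ available while the iterates remain inside the ball. Chaining the three cases together, and observing that in every scenario the function value is non-increasing with high probability, yields the concluding statement $F(\textsf{TSSRG}(x,0,m))\le F(x)$.
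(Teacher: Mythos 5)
Your proposal gets the overall skeleton right (per-step smoothness, SARAH-style variance recursion, case split on whether iterates leave the ball, uniform random break for Case~1a), and Case~1a matches the paper exactly: with probability $\ge 1/2$ the break lands on an index with $\|\nabla\hat F_x(u_j)\|\le\epsilon/2$, and the singular-value bound on $T_{u_m}$ from Lemma~\ref{singular_value_bound} upgrades this to $\|\text{grad}F(R_x(u_m))\|\le\epsilon$. However, there is a genuine gap in Case~1b, and it traces back to a structural choice made at the outset: you work with the \emph{in-expectation} variance bound $\mathbb{E}\|v_t-\nabla\hat F_x(u_t)\|^2 \le \frac{L^2}{b}\sum_j\mathbb{E}\|u_j-u_{j-1}\|^2$, and hence with the expected-descent identity~\eqref{expectation_convergence}, whereas the paper first establishes a \emph{high-probability} variance bound (Lemma~\ref{hpBound_estimation_error}, via vector Bernstein for the martingale differences followed by Azuma--Hoeffding) and then the almost-sure-conditional descent inequality~\eqref{inequality_main_descent}/\eqref{descent_equation} valid on a $1-\vartheta$ event. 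Your plan to convert the expected decrease of $\eta m\epsilon^2/16$ into a constant-probability decrease of $\eta m\epsilon^2/32$ by reverse-Markov against an envelope like $LD^2/2$ does not yield a $1/5$-type constant: the resulting lower bound on the success probability is of order $(\eta m\epsilon^2)/(LD^2)$, which — under the standing hypothesis $\epsilon\lesssim D\sqrt{L}/(m\sqrt{\eta})$ — is bounded by $\mathcal{O}(c_1/m)$ and vanishes as $m\to\infty$. The paper instead avoids Markov entirely: conditional on the descent inequality holding (probability $\ge 1-\vartheta$), the uniformly random output index $t$ lies in the last quarter $\{3m/4,\dots,m\}$ with probability $1/4$; since at most $m/2$ iterates have small pullback gradient, at least $m/4$ large-gradient iterates appear among $u_0,\dots,u_{t-1}$, so $\sum_{j\le t}\|\nabla\hat F_x(u_{j-1})\|^2\ge m\epsilon^2/16$ deterministically on that event, and a union bound gives $1/4-\vartheta\ge 1/5$. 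You need this pathwise inequality plus the combinatorics of the random break, not a reverse-Markov argument.

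For Case~2, your Cauchy--Schwarz chain $\sum_j\|u_j-u_{j-1}\|^2\ge D^2/m$ pointing to a decrement $\sim D^2/(\eta m)$ is sound as far as it goes, and you correctly flag that the variance term must be controlled with high probability. But the concentration you defer to the end is exactly what the paper does once up front (Lemma~\ref{hpBound_estimation_error}), after which it derives a clean deterministic-conditional ``improve-or-localize'' statement (Lemma~\ref{improve_localize_lemma}): $\hat F_x(0)-\hat F_x(u_\tau)\ge \tfrac{c_1L}{4\tau}\|u_\tau\|^2\ge \tfrac{c_1LD^2}{4m}$ on the same $1-\vartheta$ event, and the parameter assumption $\epsilon\le\tfrac{D}{m}\sqrt{8c_1L/\eta}$ makes this dominate $\eta m\epsilon^2/32$. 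In short: the single missing ingredient in your plan is that the SARAH error bound must be established at the level of sample paths (Azuma/Bernstein) \emph{before} the case analysis, because both Case~1b and Case~2 require it to hold simultaneously with a separate probability-$\Theta(1)$ event coming from the random break or the exit time; you cannot recover the per-call constant-probability decrease from an expectation bound alone.
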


\begin{lemma} \label{escape_stuck_region_lemma_informal} 
When current iterate is around a saddle point, i.e. $\| \emph{grad}F(x) \| \leq \epsilon$ and $\lambda_{\min} (\emph{Hess}F(x))$ $\leq - \delta$, running $\textsf{TSSRG}(x,u_0, \mathscr{T})$ with perturbation $u_0 \in \mathbb{B}_x(r)$ gives sufficient decrease of function value with high probability. That is,
    \begin{equation}
        {F}(\textsf{TSSRG}(x,u_0, \mathscr{T}) ) - F(x) \leq -\mathscr{F}, \nonumber
    \end{equation}
where $\mathscr{F} = \widetilde{\mathcal{O}}(\frac{\delta^3}{\rho^2})$.
\end{lemma}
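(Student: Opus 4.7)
The plan is to adapt the coupling sequence argument of \citet{JinPGD2017} and \citet{LiSSRGD2019} to the pullback function $\hat{F}_x$ on the tangent space, exploiting Assumption \ref{second_order_retraction_assump} to identify $\nabla^2\hat{F}_x(0)$ with $\text{Hess}F(x)$. Write $\mathcal{H}:=\nabla^2\hat{F}_x(0)$ and let $e_1$ be a unit eigenvector with $\langle e_1, \mathcal{H} e_1\rangle \leq -\delta$. Since the perturbation $u_0$ is drawn uniformly from $\mathbb{B}_x(r)$ and the ``escape'' behaviour should depend almost entirely on the $e_1$-coordinate of $u_0$, it suffices to show that the set of ``stuck'' starting perturbations inside $\mathbb{B}_x(r)$ has small width along $e_1$; then a volume-ratio calculation bounds the failure probability. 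The target function decrease $\mathscr{F}=\widetilde{\mathcal{O}}(\delta^3/\rho^2)$ is the standard scale one obtains from travelling a distance $\sim\sqrt{\delta/\rho}$ along a direction of curvature $-\delta$ for $\mathscr{T}=\widetilde{\mathcal{O}}(1/\delta)$ steps.

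\textbf{Key steps.} First, I would set up two coupled \textsf{TSSRG} runs from $u_0$ and $u_0'=u_0 + r_0 e_1$ (for an appropriate small $r_0$) that share the same mini-batch indices $\mathcal{I}$ at every inner step and the same large-batch $\mathcal{B}$. Define $\hat{u}_t:=u_t-u_t'$ and expand the recursion \eqref{SRG_update_tangent} to obtain
\begin{equation*}
\hat{u}_{t+1}=(I-\eta \mathcal{H})\hat{u}_t - \eta \Delta_t - \eta \xi_t,
\end{equation*}
where $\Delta_t$ captures the Hessian-Lipschitz error $\|\nabla^2\hat{F}_x(u)-\mathcal{H}\|\leq \rho\|u\|$ accumulated along both sequences (Assumption \ref{grad_hess_lip_asssump}) and $\xi_t$ is the variance-reduction noise term $(\nabla\hat{f}_{\mathcal{I},x}(u_t)-\nabla\hat{f}_{\mathcal{I},x}(u_t'))-(\nabla\hat{F}_x(u_t)-\nabla\hat{F}_x(u_t'))$. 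Second, under the inductive hypothesis that neither sequence has yet escaped the small ball of radius $\widetilde{\mathcal{O}}(\sqrt{\delta/\rho})$, the $e_1$-component of $(I-\eta\mathcal{H})^t \hat{u}_0$ grows like $(1+\eta\delta)^t$, while the perturbation terms $\Delta_t$ and $\xi_t$ can be controlled by an induction of the form $\|\hat{u}_t\|\lesssim (1+\eta\delta)^t r_0$, using the SRG-style second-moment bound on $\xi_t$ analogous to inequality \eqref{finte_sum_key_smmothness}. Third, after $\mathscr{T}=\widetilde{\mathcal{O}}(1/\delta)$ steps the lower bound on $\|\hat{u}_\mathscr{T}\|$ exceeds the diameter of the non-escape region, forcing at least one sequence to produce function decrease. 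To convert ``escape'' into actual decrease, I would revisit the one-step descent \eqref{finte_sum_key_smmothness} applied to $\hat{F}_x$: telescoping over the $\mathscr{T}$ steps and using the bound on the variance term shows that if after $\mathscr{T}$ iterations the iterate has moved by $\|u_\mathscr{T}-u_0\|\gtrsim \sqrt{\delta/\rho}$, then $\hat{F}_x(u_\mathscr{T})-\hat{F}_x(u_0)\leq -\widetilde{\mathcal{O}}(\delta^3/\rho^2)$, which translates via $F\circ R_x=\hat F_x$ back into the claimed decrease on $\mathcal{M}$. The small perturbation contribution $\hat{F}_x(u_0)-\hat{F}_x(0)=\mathcal{O}(\epsilon r + L r^2)$ is dominated by $\mathscr{F}$ by the choice $r=\widetilde{\mathcal{O}}(\min\{\delta^3/(\rho^2\epsilon),\sqrt{\delta^3/(\rho^2 L)}\})$.

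Finally, a union bound over the $\mathscr{T}$ steps and the two coupled trajectories, combined with a Bernstein/martingale tail bound on $\sum_t \xi_t$, yields the high-probability statement. The cases where iterates leave $\mathbb{B}_x(D)$ (Line \ref{leave_ball_ensure}) are handled separately: whenever an iterate exits, \eqref{finte_sum_key_smmothness} telescoped up to the exit time already gives $\widetilde{\Omega}(D^2)\gg \mathscr{F}$ decrease under the parameter choices in \eqref{parameter_set_finite}, so such an event only helps. The main obstacle will be the third step: controlling the stochastic term $\xi_t$ inside the coupling while simultaneously bounding the Hessian-perturbation term $\Delta_t$ that is only available at the origin (Assumption \ref{grad_hess_lip_asssump}). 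Concretely, the induction must simultaneously maintain (i) both sequences stay inside $\mathbb{B}_x(\widetilde{\mathcal{O}}(\sqrt{\delta/\rho}))$ so that $\|\Delta_t\|\lesssim \rho\sqrt{\delta/\rho}\|\hat u_t\|=\sqrt{\rho\delta}\|\hat u_t\|\ll \delta\|\hat u_t\|$, (ii) the accumulated variance $\sum_{t}\|\xi_t\|^2$ is small enough that a Freedman-type inequality gives the $(1+\eta\delta)^t$ growth rate with high probability, and (iii) the break-on-exit rule in Line \ref{leave_ball_ensure} is compatible with the coupling (if one sequence breaks, the decrease is already achieved so the other can be ignored). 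Keeping these three invariants consistent under the finite-sum parameter regime $m=b=\sqrt{n}$, $\eta\leq\widetilde{\mathcal{O}}(1/L)$ is where the technical bookkeeping concentrates.
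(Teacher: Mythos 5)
Your high-level road map matches the paper's: a coupling argument on $T_x\mathcal{M}$ with $\hat u_t = u_t - u_t'$, the $(I-\eta\mathcal{H})$-driven growth of the $e_1$-component, an induction controlling the variance-reduction martingale and the Hessian-Lipschitz error, a localization step converting ``moved a lot'' into function decrease, and a volume bound for the stuck region; the paper executes this through Lemmas~\ref{small_stuck_region_lemma}, \ref{descent_around_saddle_lemma}, \ref{improve_localize_lemma} and \ref{escape_stuck_region_lemma} in precisely this order.

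However, there is a concrete scale error that propagates through the middle of your argument. The correct escape distance is $\|u_t-u_0\|\gtrsim \delta/\rho$, not $\sqrt{\delta/\rho}$ as you write; with $\delta=\sqrt{\rho\epsilon}$ this is the standard $\sqrt{\epsilon/\rho}$. Check it against the localization Lemma~\ref{improve_localize_lemma}: with $\hat F_x(u_0)-\hat F_x(u_T)\gtrsim \frac{L}{T}\|u_T-u_0\|^2$ and $T\sim 1/\delta$, moving $\delta/\rho$ gives $L\delta\cdot\delta^2/\rho^2=L\delta^3/\rho^2\sim\mathscr F$, whereas moving $\sqrt{\delta/\rho}$ would give $L\delta^2/\rho$, the wrong order. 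This error also poisons your stated control of the Hessian-drift term: you claim $\|\Delta_t\|\lesssim \rho\sqrt{\delta/\rho}=\sqrt{\rho\delta}\ll\delta$, which is false in general (it needs $\rho\ll\delta$). With the correct radius one gets $\|\Delta_t\|\lesssim \rho\cdot(\delta/(c_2\rho))=\delta/c_2$, which is of the \emph{same} order as $\eta\gamma$ up to the constant $c_2$. The paper does not treat this term as negligible; instead it chooses $c_2\gtrsim \log_\alpha(\cdot)$ so that the accumulated contribution $\eta\mathscr T\,(\delta/c_2)$ stays below $1/4$ (see the bound \eqref{q_t_bound_former}). So the claim ``$\|\Delta_t\|\ll \delta$'' should be replaced by an explicit constant-tuning argument; otherwise the induction that $\|q(t)\|\leq\|p(t)\|/2$ is not justified. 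Apart from this, your plan is sound and matches the paper's proof.
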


Lemma \ref{large_descent_lemma_informal} claims that when gradient is large (phase 1), the output after running \textsf{TSSRG} with a single epoch either has a small gradient (Case 1a) or reduces function value by a sufficient amount (Case 1b) with high probability. Note that we need to explicitly address the case when iterates leave the constraint ball (Case 2). In this case, we show that function value already decreases by the same desired amount given that first-order tolerance $\epsilon$ is chosen reasonably small as in Theorem \ref{theorem_finite_sum}. Note for Case 1a, the output satisfies the small gradient condition and hence is immediately followed by perturbation and the follow-up updates in \textsf{TSSRG}$(x,u_0, \mathscr{T})$. Notice that we can only show $\| \nabla \hat{F}_x(u_m) \|$ is small. To connect to Riemannian gradient $\| \text{grad}F(R_x(u_m))\|$, we need the singular value bound of the differentiated retraction in Lemma \ref{singular_value_bound}. In other cases, function value decreases by $\mathcal{O}(\eta m \epsilon^2)$ with high probability. As a result, given that function $F$ is uniformly bounded by $F^*$, we can bound the number of such descent epochs $N_1$ by $\mathcal{O}(\frac{\Delta}{\eta m \epsilon^2})$, where $\Delta := F(x_0) - F^*$. Since we choose $\eta = \widetilde{\mathcal{O}}(\frac{1}{L})$, $m = b = \sqrt{n}$, $B = n$, the stochastic gradient complexity is computed as $N_1 (B + mb ) = $ $\widetilde{\mathcal{O}}(\frac{\Delta L \sqrt{n}}{\epsilon^2})$. 

In phase 2 where gradient is small, current iterate is either already a second-order stationary point or around a saddle point. Lemma \ref{escape_stuck_region_lemma_informal} states that running $\textsf{TSSRG}$ from any perturbation within the ball $\mathbb{B}_x(r)$ decreases function value by at least $\mathscr{F} = \widetilde{\mathcal{O}}(\frac{\delta^3}{\rho^2})$ with high probability. Again, since the optimality gap is bounded, the number of such escape epochs $N_2$ is bounded by $\widetilde{\mathcal{O}}( \frac{\rho^2 \Delta}{\delta^3})$. And similarly, the number of stochastic gradient queries is $N_2( \lceil \mathscr{T}/m \rceil n + \mathscr{T} b) = \widetilde{\mathcal{O}}(\frac{\Delta \rho^2 \sqrt{n}}{\delta^4} + \frac{\Delta \rho^2 n}{\delta^3})$, where we choose $\mathscr{T} = \widetilde{\mathcal{O}}(\frac{1}{\delta})$. Combining the complexities under phase 1 and phase 2 gives the result. For more rigorous analysis, we need to consider the number of wasted epochs where neither function decreases sufficiently nor gradient of the output is small. The complexity of such epochs however turns out to not exceed the complexities established before. Detailed proofs are included in Appendix \ref{subsec_theorem_finite}. 

Next we will briefly explain how Lemma \ref{large_descent_lemma_informal} (large gradient phase) and Lemma \ref{escape_stuck_region_lemma_informal} (around saddle point phase) are derived.

\vspace*{9pt}
\noindent\textbf{Large gradient phase.} The key result underlying Lemma \ref{large_descent_lemma_informal} is a high-probability version of \eqref{expectation_convergence}. To this end, we first need a high-probability bound for the variance term $\| v_{t} - \nabla \hat{F}_x(u_{t}) \|^2$. It is not difficult to verify that $y_t := v_t - \nabla \hat{F}_x(u_t), t = sm, ..., (s+1)m$ is a martingale sequence. As required by Azuma-Hoeffing inequality (Lemma \ref{azuma_hoeffding_lemma}, Appendix \ref{app_concentration_bound}), in order to bound $\| y_t \|$, we need to bound its difference sequence $z_t := y_t - y_{t-1}$. This difference sequence can be bounded by applying vector Bernstein inequality (Lemma \ref{bernsteinlemma}, Appendix \ref{app_concentration_bound}). After bounding $\| y_t\|$, we can substitute this result into \eqref{finte_sum_key_smmothness} to obtain 
\begin{equation}
    \hat{F}_x(u_t) - \hat{F}_x(u_{sm}) \leq - \frac{\eta}{2} \sum_{j=sm+1}^t \| \nabla \hat{F}_x(u_{j-1}) \|^2 \quad \text{ with high probability, } \label{inequality_main_descent}
\end{equation}
for $1 \leq t \leq m$. Note that we always call \textsf{TSSRG} for only one epoch each time. Therefore, it is sufficient to consider the first epoch in \textsf{TSSRG}. Next, the analysis is divided into whether iterates leave the constraint ball or not. When all iterates stay within the boundary of the ball, inequality \eqref{inequality_main_descent} suggests that if at least half of iterates in this epoch have large gradient, then we obtain a sufficient decrease. Otherwise, the output uniformly selected from the iterates in this epoch will have a small gradient with high probability. On the other hand, when one of the iterates escape the constraint ball, we still can show a sufficient decrease by a localization Lemma (Lemma \ref{improve_localize_lemma}, Appendix \ref{subsec_lemma_finite}). Specifically, we have $\| u_t\|^2 \leq \widetilde{\mathcal{O}}\big(\hat{F}_x(0) - \hat{F}_x(u_t) \big)$, which is derived from \eqref{finte_sum_key_smmothness} and the high-probability bound of the variance term. This bound implies that if iterates are far from the origin, function value already decreases a lot.

\vspace*{9pt}
\noindent \textbf{Around saddle point phase.} When the current iterate is around a saddle point, we need to show that the objective can still decrease at a reasonable rate with high probability. At high-level, we adopt the same coupling-sequence analysis originally introduced in \citep{JinPGD2017}. Define the stuck region as $\mathcal{X}_{\text{stuck}} := \{ u\in \mathbb{B}_x(r) : F(\textsf{TSSRG}(x, u, \mathscr{T}) - \hat{F}_x(u) \geq -2\mathscr{F}\}$ such that running \textsf{TSSRG} from points initialized in this region will not give sufficient function value decrease. Consider two initialization $u_0, u_0'$ that only differs in the direction of the smallest eigenvector $e_1$ of the Hessian $\text{Hess}F(x)$. That is, $u_0 - u_0' = r_0 e_1$ where $r_0 = \frac{\nu r}{\sqrt{d}}$ ($\nu \in (0,1)$ can be chosen arbitrarily small). Then we can prove that at least one of the sequences $\{ u_t\}, \{ u_t'\}$ generated by running \textsf{TSSRG} from perturbation $u_0$, $u_0'$ achieves large deviation from the initialized points within $\mathscr{T}$ steps (see Lemma \ref{small_stuck_region_lemma}). That is, with high probability,
\begin{equation}
    \exists \, t \leq \mathscr{T}, \, \max\{ \| u_t - u_0 \|, \| u_t' - u_0' \|\} \geq \widetilde{\mathcal{O}}\Big(\frac{\delta}{\rho}\Big).  \nonumber
\end{equation}
This result together with the localization Lemma indicates that at least one of the sequences also achieves high function value decrease. Particularly, we obtain $\max \{ \hat{F}_x (u_{0}) - F(\textsf{TSSRG}(x, u_0, \mathscr{T}),$ $\hat{F}_x (u_0') -F(\textsf{TSSRG}(x, u_0', \mathscr{T})\} \geq 2 \mathscr{F}$ where we note $\hat{F}_x(u_\mathscr{T}) = F(\textsf{TSSRG}(x, u_0, \mathscr{T}))$. This directly suggests that the width of stuck region $\mathcal{X}_{\text{stuck}}$ is at most $r_0$. Based on some geometric results, we know that the probability of any perturbation $u_0 \in \mathbb{B}_x(r)$ falling in the stuck region is at most $\nu$. In other words, with high probability, an arbitrarily chosen $u_0$ falls outside of stuck region. In this case, we achieve sufficient decrease of function value as $\hat{F}_x (u_{0}) - F(\textsf{TSSRG}(x, u_0, \mathscr{T}) \geq 2\mathscr{F}$. By carefully selecting the radius $r$ of the perturbation ball, we can bound the difference between $\hat{F}_x(0)$ and $\hat{F}_x(u_0)$ by $\mathscr{F}$. Finally, combining these two results yields Lemma \ref{escape_stuck_region_lemma_informal}:
\begin{equation}
   F(x) - F(\textsf{TSSRG}(x, u_0, \mathscr{T})) = \hat{F}_x(0) - \hat{F}_x(u_0) + \hat{F}_x(u_0) -  \hat{F}_x(u_\mathscr{T}) \geq - \mathscr{F}+ 2\mathscr{F} = \mathscr{F}. \nonumber
\end{equation}

\subsection{Online setting}
Consider online problems where full gradient is inaccessible. The proof roadmap is the same as in finite-sum setting. But now we have $v_{sm} = \nabla \hat{f}_{\mathcal{B},x}(u_{sm}) \neq \nabla \hat{F}_x(u_{sm})$. Most key results are relaxed with an additional term that relates to the variance of stochastic gradient (Assumption \ref{bounded_variance_assump}). 

\vspace*{9pt}
\noindent\textbf{Large gradient phase.} Under phase 1, we can show that \eqref{inequality_main_descent} holds with an additional term. That is, 
\begin{equation}
    \hat{F}_x (u_t) - \hat{F}_x(u_0) \leq -\frac{\eta}{2} \sum_{j=1}^t \| \nabla \hat{F}_x(u_{j-1}) \|^2 + \widetilde{\mathcal{O}} \Big(\frac{\eta t \sigma^2}{B} \Big) \quad \text{ with high probability. } \nonumber
\end{equation}
Note that under online setting, the small gradient condition is checked against the large-batch gradient $\text{grad}f_{\mathcal{B}}(x)$, rather than the full gradient (Line \ref{line_check_smallgrad}, Algorithm \ref{PRSRG_algorithm}). Therefore, compared to finite-sum cases, we require an extra bound on the difference between full gradient and large-batch gradient $\| \nabla \hat{f}_{\mathcal{B},x}(u_m) - \nabla \hat{F}_x(u_m) \|$. This can be obtained by Bernstein inequality. By choosing $B = \widetilde{\mathcal{O}}(\frac{\sigma^2}{\epsilon^2})$, similar results to Lemma \ref{large_descent_lemma} can be derived.

\vspace*{9pt}
\noindent \textbf{Around saddle point phase.} Under phase 2, we can obtain the same inequality as in Lemma \ref{escape_stuck_region_lemma}, with differences only in terms of parameter settings. 

\vspace*{9pt}
\noindent These results guarantee that the number of phase-1 and phase-2 epochs match those of finite-sum setting, up to some constants and poly-log factors. That is, $N_1 \leq \mathcal{O}(\frac{\Delta}{\eta m \epsilon^2})$ and $N_2 \leq \widetilde{\mathcal{O}}(\frac{\Delta \rho^2}{\delta^3})$. Following similar logic and choosing parameters as $m = b = \sqrt{B} = \widetilde{\mathcal{O}}(\frac{\sigma}{\epsilon})$, we can obtain the complexity in Theorem \ref{online_complexity_theorem}.

\section{Conclusion}
In this paper, we generalize perturbed stochastic recursive gradient method to Riemannian manifold by adopting the idea of tangent space steps introduced in \citep{CriscitielloEESPManifold2019}. This avoids using any complicated geometric result as in \citep{SunPRGD2019} and thus largely simplifies the analysis. We show that up to some constants and poly-log factors, our generalization achieves the same stochastic gradient complexities as the Euclidean version \citep{LiSSRGD2019}. Under finite-sum setting, our result is strictly superior to PRGD by \cite{CriscitielloEESPManifold2019} and to PRAGD by \cite{CriscitielloAFOM2020} for large-scale problems. We also prove an online complexity, which is, to the best of our knowledge, the first result in finding second-order stationary points only using first-order information.

\newpage
\appendix

\section{Useful concentration bound}
This section presents some useful concentration bounds on vector space, which is used to derive high probability bounds for sequences defined on tangent space of the manifold. 

\label{app_concentration_bound}
\begin{lemma}[Vector Bernstein inequality, \cite{TroppVBbounds2012}]
\label{bernsteinlemma}
    Given a sequence of independent vector random variables $\{ v_k \}$ in $\mathbb{R}^d$, which satisfies $\| v_k - \mathbb{E}[v_k] \| \leq R$ almost surely, then for $\varsigma \geq 0$
    \begin{align*}
        \emph{Pr} \{\| \sum_{k} (v_k - \mathbb{E}[v_k]) \| \geq \varsigma \} \leq (d+1) \exp \big( \frac{-\varsigma^2/2}{\sigma^2 + R\varsigma/3} \big) 
    \end{align*}
    where $\sigma^2 := \sum_{k} \mathbb{E} \| v_k - \mathbb{E}[v_k] \|^2.$
\end{lemma}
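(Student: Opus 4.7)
The plan is to reduce the statement to the (scalar-dimension-free) matrix Bernstein inequality of Tropp via the standard Hermitian dilation trick. Write $X_k := v_k - \mathbb{E}[v_k]$, so the $X_k$ are independent mean-zero random vectors in $\mathbb{R}^d$ with $\|X_k\| \leq R$ almost surely, and $\sigma^2 = \sum_k \mathbb{E}\|X_k\|^2$. The only thing we actually need beyond matrix Bernstein is a bookkeeping step that turns a vector-norm tail bound into a spectral-norm tail bound on a sum of $(d+1)\times(d+1)$ symmetric matrices.

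First, I would embed each $X_k$ into the symmetric dilation
\[
\widetilde{X}_k := \begin{pmatrix} 0 & X_k^T \\ X_k & 0 \end{pmatrix} \in \mathbb{R}^{(d+1)\times(d+1)}.
\]
A short eigenvalue computation shows that the nonzero eigenvalues of $\widetilde{X}_k$ are exactly $\pm\|X_k\|$, so $\|\widetilde{X}_k\| = \|X_k\| \leq R$ almost surely. Since dilation is linear and $\widetilde{X}_k$ is symmetric, the same argument applied to $\sum_k X_k$ gives the crucial identity $\|\sum_k \widetilde{X}_k\| = \|\sum_k X_k\|$. Thus the left-hand side of the claim equals the probability that a sum of independent mean-zero symmetric random matrices has large spectral norm, which is the exact setup of matrix Bernstein.

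Second, I would compute the matrix variance parameter required by matrix Bernstein, namely $\nu := \|\sum_k \mathbb{E}[\widetilde{X}_k^2]\|$. Direct block multiplication gives $\widetilde{X}_k^2 = \mathrm{diag}(\|X_k\|^2,\,X_k X_k^T)$, hence
\[
\sum_k \mathbb{E}[\widetilde{X}_k^2] = \begin{pmatrix} \sigma^2 & 0 \\ 0 & \sum_k \mathbb{E}[X_k X_k^T] \end{pmatrix}.
\]
Using that any positive semi-definite matrix $M$ satisfies $\|M\| \leq \mathrm{tr}(M)$, I get $\|\sum_k \mathbb{E}[X_k X_k^T]\| \leq \sum_k \mathrm{tr}(\mathbb{E}[X_k X_k^T]) = \sum_k \mathbb{E}\|X_k\|^2 = \sigma^2$, so the spectral norm of the block-diagonal matrix is at most $\sigma^2$, giving $\nu \leq \sigma^2$.

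Finally, I would invoke Tropp's matrix Bernstein inequality applied to $\{\widetilde{X}_k\}$ with ambient dimension $d+1$, uniform bound $R$, and variance parameter $\sigma^2$ to conclude
\[
\mathrm{Pr}\Big\{\Big\|\sum_k \widetilde{X}_k\Big\| \geq \varsigma\Big\} \leq (d+1)\exp\Big(\frac{-\varsigma^2/2}{\sigma^2 + R\varsigma/3}\Big),
\]
and substitute the dilation identity from the first step to obtain the claim. There is essentially no hard step here, since the result is really a repackaging of matrix Bernstein; the only thing to verify carefully is that the dilation preserves norms and that the variance parameter reduces to the stated $\sigma^2$. This is why the lemma is typically quoted verbatim from Tropp (2012), and why the only genuine input is the matrix Bernstein bound itself.
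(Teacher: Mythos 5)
The paper cites this lemma directly from Tropp (2012) without giving a proof, so there is no in-paper argument to compare against; you have supplied a derivation where the authors only gave a reference. Your derivation via Hermitian dilation is the standard route and is correct: the dilation $\widetilde{X}_k$ is a mean-zero symmetric matrix in $\mathbb{R}^{(d+1)\times(d+1)}$ with $\|\widetilde{X}_k\| = \|X_k\| \le R$, the dilation of the sum equals the sum of the dilations, and the variance block $\operatorname{diag}\bigl(\sigma^2,\ \sum_k \mathbb{E}[X_kX_k^T]\bigr)$ has spectral norm at most $\sigma^2$ by the trace bound $\|M\| \le \operatorname{tr}(M)$ for PSD $M$. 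One small point worth making explicit: you invoke matrix Bernstein ``with variance parameter $\sigma^2$,'' but the variance parameter in Tropp's statement is $\nu := \|\sum_k\mathbb{E}[\widetilde{X}_k^2]\|$, not $\sigma^2$; since the Bernstein tail $(d+1)\exp\bigl(-\varsigma^2/2/(\nu + R\varsigma/3)\bigr)$ is monotone increasing in $\nu$, the inequality $\nu\le\sigma^2$ lets you replace $\nu$ by $\sigma^2$ and only weakens the bound, which is what yields the stated form. You might also note that $\sum_k\widetilde{X}_k$, being itself a dilation, has spectrum symmetric about $0$, so the $\lambda_{\max}$ form of matrix Bernstein with prefactor $d+1$ already controls the operator norm and no extra factor of $2$ is needed.
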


\begin{lemma}[Azuma-Hoeffding inequality, \cite{HoeffdingBound1994,ChungBound2006}]
\label{azuma_hoeffding_lemma}
    Consider a vector-valued martingale sequence $\{y_k \}$ and its corresponding martingale difference sequence $\{ z_k \}$ in $\mathbb{R}^d$. If $\{z_k \}$ satisfies $\| z_k\| = \| y_k - y_{k-1} \| \leq c_k$ almost surely, then for $\varsigma \geq 0$, 
    \begin{equation}
        \emph{Pr} \{ \| y_k - y_0 \| \geq \varsigma \} \leq (d+1) \exp \Big( \frac{-\varsigma^2}{8\sum_{i=1}^k c_i^2} \Big). \label{prob_1_AH_inequality}
    \end{equation}
    If $\| z_k \| = \|y_k - y_{k-1}\| \leq c_k$ with probability $1 - \zeta_k$, then for $\varsigma \geq 0$, 
    \begin{equation}
        \emph{Pr} \{ \| y_k - y_0 \| \geq \varsigma \} \leq (d+1) \exp \Big( \frac{-\varsigma^2}{8\sum_{i=1}^k c_i^2} \Big) + \sum_{i=1}^k \zeta_i \nonumber
    \end{equation}
\end{lemma}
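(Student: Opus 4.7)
My plan is to establish the first (almost-sure) bound by Hermitian dilation followed by the matrix Azuma inequality of \citet{TroppVBbounds2012}, and then lift it to the second (high-probability) bound via a standard union-bound argument on the event where all per-step norm bounds hold.

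For the first inequality I would form the $(d+1)\times(d+1)$ self-adjoint dilation
\[
\widetilde{y}_k := \begin{pmatrix} 0 & y_k - y_0 \\ (y_k - y_0)^\top & 0 \end{pmatrix}, \qquad \widetilde{z}_i := \begin{pmatrix} 0 & z_i \\ z_i^\top & 0 \end{pmatrix},
\]
so that $\lambda_{\max}(\widetilde{y}_k) = \|y_k - y_0\|$ and $\{\widetilde{y}_k\}$ is a self-adjoint matrix martingale with difference sequence $\{\widetilde{z}_i\}$. A direct block computation shows $\widetilde{z}_i^{\,2} \preceq \|z_i\|^2 I_{d+1} \preceq c_i^2 I_{d+1}$ almost surely, so Tropp's matrix Azuma inequality applied with variance proxy $\sigma^2 = \bigl\|\sum_{i=1}^k c_i^2 I_{d+1}\bigr\| = \sum_{i=1}^k c_i^2$ gives
\[
\Pr(\|y_k - y_0\| \geq \varsigma) = \Pr\bigl(\lambda_{\max}(\widetilde{y}_k) \geq \varsigma\bigr) \leq (d+1)\exp\bigg(\frac{-\varsigma^2}{8\sum_{i=1}^k c_i^2}\bigg),
\]
which is exactly \eqref{prob_1_AH_inequality}.

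For the second inequality, let $A := \bigcap_{i=1}^k \{\|z_i\| \leq c_i\}$, so that a union bound yields $\Pr(A^c) \leq \sum_{i=1}^k \zeta_i$. On $A$ the hypotheses of the first part hold along the entire trajectory, and a standard stopping-time coupling produces a matrix martingale $\{\widetilde{y}_k'\}$ that agrees with $\{\widetilde{y}_k\}$ on $A$ and whose differences are bounded by $c_i$ almost surely; applying the first part to $\{\widetilde{y}_k'\}$ bounds $\Pr(\{\|y_k - y_0\|\geq \varsigma\} \cap A)$ by the same right-hand side. Splitting
\[
\Pr(\|y_k - y_0\| \geq \varsigma) \leq \Pr\bigl(\{\|y_k - y_0\| \geq \varsigma\} \cap A\bigr) + \Pr(A^c)
\]
and combining the two bounds yields the desired tail estimate.

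The only step that requires genuine care is the coupling in the second part: naive truncation $z_i \mapsto z_i \mathbf{1}\{\|z_i\| \leq c_i\}$ destroys the zero-conditional-mean property, so the coupled martingale must instead be defined by stopping $\widetilde{y}_k$ just before the first index at which $\|z_i\| > c_i$, so that its differences are bounded path-wise while it still coincides with $\widetilde{y}_k$ on $A$. Beyond this routine measure-theoretic bookkeeping, no new probabilistic machinery is required; matrix Azuma plus a union bound is enough.
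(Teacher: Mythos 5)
The paper does not actually prove Lemma~\ref{azuma_hoeffding_lemma}: it is stated with citations to \citet{HoeffdingBound1994} and \citet{ChungBound2006} and used as a black box, so there is no ``paper's proof'' to compare against directly. With that caveat, here is an assessment of your argument.

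Your first part is correct. Hermitian dilation sends $y_k-y_0$ to a $(d+1)\times(d+1)$ self-adjoint matrix martingale whose largest eigenvalue equals $\|y_k-y_0\|$, the dilated differences satisfy $\widetilde z_i^{\,2}\preceq\|z_i\|^2 I_{d+1}\preceq c_i^2 I_{d+1}$, and Tropp's matrix Azuma with $\sigma^2=\sum_i c_i^2$ gives precisely \eqref{prob_1_AH_inequality}. This is a clean and standard route.

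The second part has a genuine gap. You correctly flag that naive truncation $z_i\mapsto z_i\mathbf{1}\{\|z_i\|\le c_i\}$ destroys $\mathbb{E}[\cdot\mid\mathcal{F}_{i-1}]=0$, but the fix you propose --- ``stopping just before the first index at which $\|z_i\|>c_i$'' --- does not work either. Let $\tau=\min\{i:\|z_i\|>c_i\}$. Then $\tau$ is a stopping time but $\tau-1$ is not (since $\{\tau-1\le j\}=\{\tau\le j+1\}\in\mathcal{F}_{j+1}$, not $\mathcal{F}_j$), so $y_{j\wedge(\tau-1)}$ is not produced by optional stopping and indeed is not a martingale: its increment is $z_j\mathbf{1}\{\tau>j\}=z_j\mathbf{1}\{\tau>j-1\}\mathbf{1}\{\|z_j\|\le c_j\}$, whose conditional mean is $\mathbf{1}\{\tau>j-1\}\,\mathbb{E}[z_j\mathbf{1}\{\|z_j\|\le c_j\}\mid\mathcal{F}_{j-1}]\ne 0$ in general. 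On the other hand, the genuinely stopped martingale $y_{j\wedge\tau}$ \emph{is} a martingale, but its increment at step $\tau$ equals $z_\tau$ with $\|z_\tau\|>c_\tau$, so it fails the almost-sure bound you need to invoke part one. Either way, the coupling you describe does not simultaneously preserve the martingale property and the path-wise increment bound.

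To make this rigorous you would need one of the standard repairs. One option is re-centering after truncation, $z_i':=z_i\mathbf{1}\{\|z_i\|\le c_i\}-\mathbb{E}[z_i\mathbf{1}\{\|z_i\|\le c_i\}\mid\mathcal{F}_{i-1}]$, which restores the martingale property; but then $\|z_i'\|\le 2c_i$ rather than $c_i$ and the exponent degrades by a factor of four, so the constant $8$ in the displayed bound is not recovered without further argument. Another option, which is what actually holds in every place the paper invokes this lemma (e.g.\ Lemma~\ref{hpBound_estimation_error}, where $\|z_k\|\le 2L\|u_k-u_{k-1}\|$ holds almost surely), is to additionally assume a deterministic envelope $\|z_i\|\le R_i$ a.s.\ and run the matrix-MGF (Lieb / Golden--Thompson) argument directly, conditioning on the predictable event $\{\tau\ge i\}\in\mathcal{F}_{i-1}$ and bounding the conditional MGF on that event; this bypasses the need to manufacture a coupled martingale with bounded steps. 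As written, the union-bound skeleton of your second part is fine, but the key coupling step does not go through and would need to be replaced by one of these arguments.
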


\section{Regularity conditions on Riemannian manifold}
\label{app_regularity_condition}
In this section, we prove some regularity conditions on manifolds, which are fundamental for Riemannian optimization, as seen in several literature \citep{CriscitielloAFOM2020,BoumalIntroductionMO2020}.

\vspace*{9pt}
\noindent \textbf{Lemma \ref{L_lipschit_assump} ($L$-Lipschitz continuous)} \textit{  Under Assumption \ref{grad_hess_lip_asssump}, for all $x \in \mathcal{M}$, there exists $L = \ell + \rho D$ such that $\nabla \hat{f}_{i, x}$ is $L$-Lipschitz continuous inside the ball $\mathbb{B}_x(D)$. This also implies that $\nabla \hat{F}_{x}$ is $L$-Lipschitz continuous. That is, for any $u, v \in \mathbb{B}_x(D)$, we have}
    \begin{equation*}
        \| \nabla \hat{f}_{i, x}(u) - \nabla \hat{f}_{i, x} (v)  \| \leq L \| u - v \| \quad \text{ and } \quad \|\nabla \hat{F}_{x}(u) - \nabla \hat{F}_{ x} (v) \| \leq L \| u - v \|.
    \end{equation*}
\begin{proof}
    The proof of $\nabla \hat{f}_{i, x}$ being Lipschitz continuous is the same as in \cite{CriscitielloEESPManifold2019}. We include it here for completeness. From Assumption \ref{grad_hess_lip_asssump}, we have $\| \nabla^2 \hat{f}_{i,x}(0) \| \leq \ell$. Thus, 
    \begin{equation}
        \| \nabla^2 \hat{f}_{i,x}(u) \| \leq \| \nabla^2 \hat{f}_{i,x} (u) - \nabla^2 \hat{f}_{i,x}(0) \| + \| \nabla^2 \hat{f}_{i,x}(0) \| \leq \rho \| u\| + \ell \leq \ell + \rho D = L. \nonumber
    \end{equation}
    Hence, for any $u, v \in \mathbb{B}_x(D)$, we obtain
    \begin{equation}
        \| \nabla \hat{f}_{i, x}(u) - \nabla \hat{f}_{i, x} (v) \| = \| \int_{0}^1 \nabla^2 \hat{f}_{i,x} (v + (u-v)\tau)[u - v] d\tau \| \leq L \| u - v\|.   \nonumber
    \end{equation}
    This implies that the full gradient $\nabla \hat{F}_{x}$ is Lipschitz continuous because for any $u, v \in \mathbb{B}_x(D)$
    \begin{equation}
        \| \nabla \hat{F}_x(u) - \nabla \hat{F}_x(v)\| = \| \frac{1}{n}\sum_{i=1}^n \big( \nabla \hat{f}_{i, x}(u) - \nabla \hat{f}_{i,x}(v) \big)\| = \| \nabla \hat{f}_{i, x}(u) - \nabla \hat{f}_{i,x}(v) \| \leq L \| u - v \|. \nonumber
    \end{equation}
    This completes the proof.
\end{proof}

\noindent \textbf{Lemma \ref{singular_value_bound} (Singular value bound of differentiated retraction)}
\textit{ For all $x, y = R_x(u) \in \mathcal{X}$ where $ u \in \mathbb{B}_x(D)$ with $D \leq \frac{1}{2 c_0}$, we have $\sigma_{\min}(T_u) \geq \frac{1}{2}$. }

\vspace*{9pt}
\begin{proof}
    From Assumption \ref{diff_vector_diff_assump}, we have $\| T_u - \mathcal{T}_u \| \leq c_0 \| u\| \leq c_0 D \leq \frac{1}{2}$. Therefore $\sigma_{\min}(T_u) = \min_{\| \xi \| = 1} \| T_u\xi \| \geq \min_{\xi} \| \mathcal{T}_u \xi \| - \| (T_u - \mathcal{T}_u) \xi \| \geq 1-\frac{1}{2} = \frac{1}{2}$, where the last inequality uses the fact that all singular values of an isometric operator are $1$.
\end{proof}

\begin{lemma}[Gradient and Hessian of the pullback under second-order retraction]
\label{grad_hess_pullback_second_order}
Given a second order retraction $R_x : T_x\mathcal{M} \xrightarrow[]{} \mathcal{M}$, both gradient and Hessian of the pullback function $\hat{f}_x := f \circ R_x: T_x\mathcal{M} \xrightarrow{} \mathbb{R}$ evaluated at the origin of $T_x\mathcal{M}$ match the Riemannian gradient and Hessian of $f$. That is, for all $x \in \mathcal{M}$,
\begin{equation*}
    \nabla \hat{f}_x(0) = \emph{grad} f(x), \, \emph{ and } \, \nabla^2 \hat{f}_x(0) = \emph{Hess} f(x)
\end{equation*}
\end{lemma}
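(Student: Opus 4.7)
The plan is to apply Lemma \ref{define_grad_hess_pullback} at $u = 0$ and then use the defining properties of retractions together with the second-order retraction assumption (Assumption \ref{second_order_retraction_assump}) to simplify both expressions.

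First, I would handle the gradient. By the two retraction axioms, $R_x(0) = x$ and $\mathrm{D}R_x(0)$ is the identity map on $T_x\mathcal{M}$, so $T_0 = \mathrm{id}$ and therefore $T_0^* = \mathrm{id}$ as well. Substituting $u = 0$ into the pullback gradient formula from Lemma \ref{define_grad_hess_pullback} immediately gives
\[
\nabla \hat{f}_x(0) \;=\; T_0^* \,\mathrm{grad}f(R_x(0)) \;=\; \mathrm{grad}f(x).
\]

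Next, for the Hessian, the same substitution into the pullback Hessian formula yields
\[
\nabla^2 \hat{f}_x(0) \;=\; T_0^* \circ \mathrm{Hess}f(R_x(0)) \circ T_0 + W_0 \;=\; \mathrm{Hess}f(x) + W_0,
\]
so it remains to show $W_0 = 0$. Recall from Lemma \ref{define_grad_hess_pullback} that $W_u$ is the symmetric operator characterised by $\langle W_u[\dot u], \dot u\rangle_x = \langle \mathrm{grad}f(R_x(u)), c''(0)\rangle_{R_x(u)}$ with $c(t) = R_x(u + t\dot u)$. At $u = 0$, the curve becomes $c(t) = R_x(t\dot u)$, which is exactly the retraction curve emanating from $x$ in direction $\dot u$. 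Assumption \ref{second_order_retraction_assump} states precisely that every such curve has zero initial acceleration, $c''(0) = 0$. Hence $\langle W_0[\dot u], \dot u\rangle_x = 0$ for every $\dot u \in T_x\mathcal{M}$, and symmetry of $W_0$ (via polarisation) forces $W_0 = 0$. Combining gives $\nabla^2 \hat{f}_x(0) = \mathrm{Hess}f(x)$.

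There is no real obstacle here; the lemma is essentially an unpacking of definitions once Lemma \ref{define_grad_hess_pullback} is in hand. The only subtlety worth flagging is the passage from $\langle W_0[\dot u],\dot u\rangle_x = 0$ for all $\dot u$ to $W_0 = 0$, which requires the symmetry of $W_0$ asserted in Lemma \ref{define_grad_hess_pullback}; without symmetry one would only get that the symmetric part vanishes.
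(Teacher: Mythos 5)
Your proof is correct, but it takes a genuinely different route from the paper's. You substitute $u = 0$ directly into the formulas of Lemma~\ref{define_grad_hess_pullback}, using $T_0 = \mathrm{D}R_x(0) = \mathrm{id}$ to collapse the gradient identity, and killing $W_0$ via the zero-initial-acceleration property of a second-order retraction together with polarisation and the stated symmetry of $W_u$. The paper instead re-derives the gradient identity from scratch by the chain rule, and for the Hessian it compares two second-order Taylor expansions of $\hat{f}_x$ at $0$: a Riemannian one along the retraction curve (whose $\frac{1}{2}\langle \mathrm{grad}f(x), c''(0)\rangle$ term vanishes under Assumption~\ref{second_order_retraction_assump}) against the classical vector-space Taylor expansion, matching the quadratic terms. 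Your argument is shorter and more structural once Lemma~\ref{define_grad_hess_pullback} is accepted, since it avoids any Taylor-remainder bookkeeping; the paper's argument is more self-contained in that it does not lean on the full strength of Lemma~\ref{define_grad_hess_pullback} (which is itself cited from external work). Your observation about needing symmetry of $W_0$ for the polarisation step is correct and worth the explicit remark, since a quadratic form vanishing identically only annihilates the symmetric part of the operator.
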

\begin{proof}
    The proof is mainly based on \citep{BoumalIntroductionMO2020} and we include it here for completeness. First note that for any retraction (not necessarily second-order), the gradient is matched. That is, for any $u \in T_x\mathcal{M}$, by chain rule, 
    \begin{equation*}
        \text{D} \hat{f}_x(0)[u] = \text{D} (f \circ R_x)(0)[u] = \text{D} f(R_x(0))[\text{D} R_x(0)[u]] = Df(x)[u],
    \end{equation*}
    where we use the definition of retraction where $R_x(0) = x$ and $\text{D} R_x(0)[u] = u$. Then we can use the definition of Riemannian gradient and its uniqueness property to show the result. Next we prove the second result. Consider a second-order Taylor expansion of $f$ from $x$ to $R_x(u)$ along the retraction curve as 
    \begin{align}
        \hat{f}_x(u) = f(R_x(u)) &= f(x) + \langle \text{grad}f(x), u\rangle + \frac{1}{2} \langle \text{Hess}f(x)[u],u \rangle + \frac{1}{2} \langle \text{grad}f(x), c''(0) \rangle + \mathcal{O}(\| u\|^3) \nonumber\\
        &=  f(x) + \langle \text{grad}f(x), u\rangle + \frac{1}{2} \langle \text{Hess}f(x)[u],u \rangle + \mathcal{O}(\| u\|^3), \label{qwewewrere}
    \end{align}
    due to $c''(0) = 0$ for second-order retraction. Also since $\hat{f}_x$ is a `classic' function from vector space to real number, we can use a classic Taylor expansion of this function as 
    \begin{align}
        \hat{f}_x(u) = \hat{f}_x(0) + \langle\nabla \hat{f}_x(0), u \rangle + \frac{1}{2} \langle  \nabla^2 \hat{f}_x(0)[u], u \rangle + \mathcal{O}(\| u\|^3). \label{pmuykujk}
    \end{align}
    Given that we already have $\nabla \hat{f}_x(0) = \text{grad}f(x)$, we have by comparing \eqref{pmuykujk} with \eqref{qwewewrere}, $\nabla^2 \hat{f}_x(0) = \text{Hess}f(x)$. 
\end{proof}

\section{Proof for finite-sum setting}
In this section, we prove the main complexity results under finite-sum setting. In this case, we choose $B = n$ and hence from Algorithm \ref{TSSRG_algorithm}, we have access to the full gradient and $\text{grad}f_{\mathcal{B}} \equiv \text{grad}F$. We start by showing the proof for the main complexity Theorem in subsection \ref{subsec_theorem_finite}. Then we prove some key lemmas necessary to derive the Theorem in \ref{subsec_lemma_finite}.

\subsection{Proof for main Theorem}
\label{subsec_theorem_finite}

\textbf{Theorem \ref{theorem_finite_sum} (Finite-sum complexity)}
\textit{Under Assumptions \ref{bounded_function_assump} to \ref{diff_vector_diff_assump}, consider finite-sum optimization setting. For any starting point $x_0 \in \mathcal{M}$ with the choice of parameters as
\begin{equation*}
    \mathscr{T} = \widetilde{\mathcal{O}}\Big(\frac{1}{\delta} \Big), \quad \eta \leq \widetilde{\mathcal{O}}\Big(\frac{1}{L} \Big), \quad m = b = \sqrt{n}, \quad B = n, \quad r = \widetilde{\mathcal{O}} \Big(\min \Big\{ \frac{\delta^3}{\rho^2\epsilon}, \sqrt{\frac{\delta^3}{\rho^2L}} \Big\} \Big),
\end{equation*}
suppose $\epsilon, \delta$ satisfy $\epsilon \leq \widetilde{\mathcal{O}}(\frac{D \sqrt{L}}{m \sqrt{\eta}}), \, \delta \leq L$. Then with high probability, \textsf{PRSRG}$(x_0, \eta, m, b, B, r, \mathscr{T}, D, \epsilon)$ will at least once visit an $(\epsilon, \delta)$-second-order critical point with
\begin{equation*}
    \widetilde{\mathcal{O}}\Big( \frac{\Delta L \sqrt{n}}{\epsilon^2} + \frac{\Delta \rho^2 \sqrt{n} }{\delta^4} + \frac{\Delta \rho^2 n}{\delta^3}\Big)
\end{equation*}
stochastic gradient queries, where $\Delta := F(x_0) - F^*$.}

\vspace*{9pt}
\begin{proof}
Here are all possible cases when running the main algorithm $\textsf{PRSRG}$. Notice that we need to explicitly discuss the case where iterates escape the constraint ball $\mathbb{B}_x(D)$ within an epoch. Under large gradient situation, when iterates leave the constraint ball, it achieves a large function decrease with high probability (hence labelled as descent epoch). Around saddle point, when iterates leave the constraint ball, it already decreases function value with probability 1 (hence merged with the case when iterates fall inside the ball).  
\begin{itemize}
    \item {Large gradients where ${\| \text{grad}F(x) \| \geq \epsilon}$}.
    \begin{enumerate}
        \item \textit{Type-1 descent epoch}: Iterates escape the constraint ball.
        \item Iterates do not escape the constraint ball. 
        \begin{enumerate}
            \item \textit{Type-2 descent epoch}: At least half of iterates in current epoch have pullback gradient larger than $\epsilon/2$. 
            \item \textit{Useful epoch}: At least half of iterates in current epoch have pullback gradient no larger than $\epsilon/2$ and output $\Tilde{x}$ from current epoch has gradient no larger than $\epsilon$. (Since output satisfies small gradient condition, the next epoch will run \textsf{TSSRG}$(\Tilde{x}, u_0, \mathscr{T})$ to escape saddle points).
            \item \textit{Wasted epoch}: At least half of iterates in current epoch have pullback gradient no larger than $\epsilon/2$ and output $\Tilde{x}$ from current epoch has gradient larger than $\epsilon$.
        \end{enumerate}
    \end{enumerate}
    \item {Around saddle points where ${\| \text{grad}F(x) \| \leq \epsilon}$ and ${\lambda_{\min} (\text{Hess}(x)) \leq -\delta }$}
    \begin{enumerate}[start=3]
        \item \textit{Type-3 descent epoch}: Current iterate is around saddle point.
    \end{enumerate}
\end{itemize}
First, because output $\Tilde{x}$ for current epoch is randomly selected from the iterates, the probability of a \textit{wasted epoch} is at most $1/2$. Also due to the independence of each \textit{wasted epoch}, with high probability, \textit{wasted epoch} occurs consecutively at most $\widetilde{\mathcal{O}}(1)$ times before either a \textit{descent epoch} (either type 1 or 2) or a \textit{useful epoch}. \footnote{That is, suppose the probability of at least half of iterates not escaping the constraint ball have large small gradient is $\theta$. Therefore, the probability of $X$ consecutive occurrences of \textit{wasted epoch} is $(\frac{\theta}{2})^X$. Then with high probability of $1 - \iota$, there exists at least one \textit{useful epoch} or \textit{type-2 descent epoch} in $X = \mathcal{O}(-\log(\iota)) = \widetilde{\mathcal{O}}(1)$.} Hereafter, we use $N_1$, $N_2$ and $N_3$ to respectively represent three types of descent epoch. 
    
    Consider \textit{Type-1 descent epoch}. From Case 2 in Lemma \ref{large_descent_lemma}, with probability $1 - \vartheta$, function value decreases by at least $\frac{\eta m \epsilon^2}{32}$ and with high probability the function value decreases. Hence by the standard concentration, after $N_1$ such epochs, function value is reduced by $\mathcal{O}(\eta m \epsilon^2 N_1)$ with high probability. Given that $F(x)$ is bounded by $F^*$, the decrease cannot exceed $\Delta := F(x_0) - F^*$. Therefore, $N_1 \leq \mathcal{O}(\frac{\Delta}{\eta m \epsilon^2})$. Similarly, for \textit{Type-2 descent epoch}, $N_2 \leq \mathcal{O}(\frac{\Delta}{\eta  m \epsilon^2})$. 
    
    Consider \textit{Useful epoch} where output $\| \text{grad}F(\Tilde{x}) \| \leq \epsilon$. If further $\lambda_{\min}(\text{Hess}F(\Tilde{x})) \geq - \delta$, then $\Tilde{x}$ is already an $(\epsilon, \delta)$-second-order critical point. Otherwise, a \textit{Useful epoch} is followed by \textit{Type-3 descent epoch} around saddle points. From Lemma \ref{escape_stuck_region_lemma}, we know that function value decreases by $\mathscr{F} = \frac{\delta^3}{2c_3 \rho^2}$ with high probability. Similar to argument for other types of descent epoch, $N_3 \leq \widetilde{ \mathcal{O}}(\frac{ \rho^2 \Delta}{\delta^3})$, where we omit $c_3 = \widetilde{\mathcal{O}}(1)$. 
    
    Hence, we have the following stochastic gradient complexity: 
    \begin{align}
        &(N_1 + N_2)\big(\widetilde{\mathcal{O}}(1) \big( n + mb\big) + n + mb  \big) + N_3 \big( \widetilde{\mathcal{O}}(1) \big( n + mb \big) + \lceil \mathscr{T}/m \rceil n + \mathscr{T} b \big) \nonumber\\
        &\leq \widetilde{\mathcal{O}}\Big( \frac{\Delta}{\eta m \epsilon^2} n +  \frac{\rho^2 \Delta}{\delta^3} n + \frac{\rho^2 \Delta}{\delta^3} (n + \frac{\sqrt{n}}{\delta}) \Big) \nonumber\\
        &\leq \widetilde{\mathcal{O}}\Big( \frac{\Delta L \sqrt{n}}{\epsilon^2} + \frac{\Delta \rho^2 \sqrt{n} }{\delta^4} + \frac{\Delta \rho^2 n}{\delta^3}\Big), \nonumber
    \end{align}
    where $\mathscr{T} = \widetilde{\mathcal{O}}(\frac{1}{\delta})$, $m = b = \sqrt{n}$, $\eta = \widetilde{\mathcal{O}}(\frac{1}{L})$.
\end{proof}

\subsection{Proof for key Lemmas}
\label{subsec_lemma_finite}
Organization of these Lemmas are as follows. In Lemma \ref{hpBound_estimation_error}, we first prove a high probability bound for the estimation error of the modified gradient $v_t$. This is to replace the expectation bound commonly used in deriving first-order guarantees. Lemma \ref{improve_localize_lemma} is to show that the iterates that deviate a lot from the initialized point also achieve large function value decrease. These two results are subsequently used to derive Lemma \ref{large_descent_lemma}, a descent Lemma for large gradient phase. 

Under saddle point phase, we first show the proof that at least one of the coupled sequences achieve large deviation from the initialization (Lemma \ref{small_stuck_region_lemma}). This then translates to a sufficient function value decrease in Lemma \ref{descent_around_saddle_lemma}. Finally, it can be shown in Lemma \ref{escape_stuck_region_lemma} that with high probability, the iterates can escape saddle point and decrease function value by a desired amount.

\begin{lemma}[High probability bound on estimation error] 
\label{hpBound_estimation_error}
Under Assumption \ref{grad_hess_lip_asssump}, we have the following high probability bound for estimation error of the modified gradient under finite-sum setting. That is, for $sm+1 \leq t \leq (s+1)m$, 
    \begin{equation}
        \| v_t - \nabla \hat{F}_x(u_t) \| \leq  \frac{\mathcal{O}\big(\log^{\frac{3}{2}}(d/\vartheta) \big) L}{\sqrt{b}} \sqrt{\sum_{j=sm+1}^t \| u_j - u_{j-1} \|^2}  \quad \text{ with probability } 1 - \vartheta, \nonumber
    \end{equation}
\end{lemma}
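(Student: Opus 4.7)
The plan is to identify $\{y_t := v_t - \nabla \hat{F}_x(u_t)\}_{t\geq sm}$ as a vector-valued martingale with $y_{sm}=0$ (since under finite-sum we have $B=n$ and $v_{sm}=\nabla \hat{F}_x(u_{sm})$), then apply vector Bernstein (Lemma \ref{bernsteinlemma}) to bound each martingale difference $z_t := y_t - y_{t-1}$, and finally apply the high-probability form of Azuma--Hoeffding (Lemma \ref{azuma_hoeffding_lemma}) to get a uniform bound on $\|y_t\|$.

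First I would write out the difference sequence. From the recursion in \eqref{SRG_update_tangent},
\begin{equation*}
    z_t = \nabla \hat{f}_{\mathcal{I},x}(u_t) - \nabla \hat{f}_{\mathcal{I},x}(u_{t-1}) - \bigl( \nabla \hat{F}_x(u_t) - \nabla \hat{F}_x(u_{t-1}) \bigr) = \frac{1}{b}\sum_{i\in\mathcal{I}} X_i^{(t)},
\end{equation*}
where $X_i^{(t)} := \nabla \hat{f}_{i,x}(u_t) - \nabla \hat{f}_{i,x}(u_{t-1}) - (\nabla \hat{F}_x(u_t) - \nabla \hat{F}_x(u_{t-1}))$. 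Conditioning on the filtration $\mathcal{F}_{t-1}$ generated by everything up to iteration $t-1$, and noting that both $u_t = u_{t-1} - \eta v_{t-1}$ and $u_{t-1}$ are $\mathcal{F}_{t-1}$-measurable while $\mathcal{I}$ is drawn afresh, each $X_i^{(t)}$ has conditional mean zero, so $\mathbb{E}[z_t\mid\mathcal{F}_{t-1}]=0$ and $\{y_t\}$ is indeed a martingale.

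Next I would bound $X_i^{(t)}$ using Assumption \ref{grad_hess_lip_asssump} (via Lemma \ref{L_lipschit_assump}): $\|X_i^{(t)}\|\leq 2L\|u_t-u_{t-1}\|$ and $\mathbb{E}\|X_i^{(t)}\|^2\leq L^2\|u_t-u_{t-1}\|^2$. Applying vector Bernstein (Lemma \ref{bernsteinlemma}) to the $b$ i.i.d. summands, and solving for the tail with failure probability $\vartheta/(2m)$, yields
\begin{equation*}
    \| z_t \| \;\leq\; \mathcal{O}\!\left(\frac{\log(dm/\vartheta)\,L}{\sqrt{b}}\right)\| u_t - u_{t-1}\|
\end{equation*}
with conditional probability at least $1-\vartheta/(2m)$; I would absorb the sub-Gaussian and sub-exponential Bernstein regimes into the single $\log$ factor, using that in the relevant parameter regime $b$ is large enough that the Gaussian term dominates (otherwise the bound only improves). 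Call the right-hand side $c_t$.

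Finally I would invoke the high-probability Azuma--Hoeffding bound \eqref{prob_1_AH_inequality} with these $c_t$ and failure probabilities $\zeta_t = \vartheta/(2m)$. Choosing
\begin{equation*}
    \varsigma \;=\; \mathcal{O}\!\Bigl(\sqrt{\log(d/\vartheta)}\Bigr)\sqrt{\sum_{j=sm+1}^{t} c_j^2}
    \;=\; \frac{\mathcal{O}\bigl(\log^{3/2}(d/\vartheta)\bigr) L}{\sqrt{b}}\sqrt{\sum_{j=sm+1}^t \|u_j - u_{j-1}\|^2},
\end{equation*}
a union bound over the $m$ steps in the epoch, combined with the $\sum_j\zeta_j\leq \vartheta/2$ contribution in Lemma \ref{azuma_hoeffding_lemma}, gives the claimed tail with total failure probability at most $\vartheta$. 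The three factors of $\sqrt{\log(d/\vartheta)}$ compose as: one from Azuma, one from Bernstein (dominant term), and one from the union-bound inflation of the Bernstein parameter.

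The main obstacle is bookkeeping rather than any single hard step. Specifically, I need to ensure that the per-step Bernstein bound on $\|z_t\|$ holds jointly with sufficient probability for Lemma \ref{azuma_hoeffding_lemma} to apply with the stated $c_t$'s treated as deterministic, and I need to verify that the $u_j$'s appearing in the final $\sum\|u_j-u_{j-1}\|^2$ can themselves be treated as random quantities on the event that the bound holds — this is fine because Lemma \ref{azuma_hoeffding_lemma}'s conclusion is an inequality that holds pointwise on the good event, not in expectation. The other care-point is the $\log^{3/2}$ exponent: matching it exactly requires keeping the Bernstein $\log$ inside $c_t$ rather than absorbing it into the Azuma $\log$, which is why the composition produces $\sqrt{\log}\cdot\log=\log^{3/2}$.
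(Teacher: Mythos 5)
Your proposal follows the same route as the paper's proof: identify $y_t := v_t - \nabla\hat F_x(u_t)$ as a martingale with $y_{sm}=0$, bound the increments $z_t$ via vector Bernstein applied to the centered mini-batch summands (with the same bounds $\|X_i\|\le 2L\|u_t-u_{t-1}\|$ and $\mathbb E\|X_i\|^2\le L^2\|u_t-u_{t-1}\|^2$), union-bound over the $m$ inner steps, and feed the resulting increment bounds into Azuma--Hoeffding to obtain the $\log^{3/2}$ factor. The only differences are cosmetic (splitting $\vartheta$ into $\vartheta/(2m)$ versus $\vartheta/m$, and slightly more explicit bookkeeping of where each $\log$ power arises), so the argument matches the paper's.
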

\begin{proof}
For simplicity of notation, consider a single epoch from $k = 1,...,m, \forall{s}$. Consider two sequences on $T_x\mathcal{M} \subseteq \mathbb{R}^d$, defined as $y_k := v_k - \nabla \hat{F}_{x}(u_k)$ and $z_k := y_k - y_{k-1}$. It is easily verified that $y_k$ is a martingale and $z_k$ is its difference sequence. That is, denote $\mathcal{F}_{t}$ is a filtration at time $t$. $\mathbb{E}[y_t | \mathcal{F}_{t-1}] = \mathbb{E}_{\mathcal{F}_{t-1}}[\nabla \hat{f}_{\mathcal{I}, x}(u_{t}) - \nabla \hat{f}_{\mathcal{I}, x}(u_{t-1}) + v_{t-1} -  \nabla \hat{F}_x(u_t)] = v_{t-1} - \nabla \hat{F}_x(u_{t-1}) = y_{t-1}$ where we use unbiasedness of i.i.d sampling. Hence, to bound $\| y_k \|$, we need to first bound $\| z_k \|$ according to Azuma-Hoeffding Lemma. We can use vector Bernstein inequality to bound $z_k$ as follows. First note that
\begin{align}
    z_k &= v_k -  \nabla \hat{F}_{x}(u_k) - v_{k-1} +  \nabla \hat{F}_{x}(u_{k-1}) \nonumber\\
    &= \nabla \hat{f}_{\mathcal{I}, x} (u_k) - \nabla \hat{f}_{\mathcal{I}, x} (u_{k-1}) + v_{k-1} - \nabla \hat{F}_{x}(u_k) - v_{k-1} +  \nabla \hat{F}_{x}(u_{k-1})\nonumber\\
    &= \nabla \hat{f}_{\mathcal{I}, x} (u_k) - \nabla \hat{f}_{\mathcal{I}, x} (u_{k-1}) - \nabla \hat{F}_{x}(u_k) + \nabla \hat{F}_{x}(u_{k-1}). \nonumber
\end{align}
Denote $w_i := \nabla \hat{f}_{i, x} (u_k) - \nabla \hat{f}_{i, x} (u_{k-1}) - \nabla \hat{F}_{x}(u_k) + \nabla \hat{F}_{x}(u_{k-1})$ and therefore $z_k = \frac{1}{b}\sum_{i \in \mathcal{I}} w_i$ with $|\mathcal{I}| =b$. In order to apply Bernstein inequality, we need to show that $\| w_i\|$ is bounded. This is achieved by Lemma \ref{L_lipschit_assump}. That is, 
\begin{align}
    \| w_i\| &= \| \nabla \hat{f}_{i, x} (u_k) - \nabla \hat{f}_{i, x} (u_{k-1}) - \nabla \hat{F}_{x}(u_k) + \nabla \hat{F}_{x}(u_{k-1})\| \nonumber\\
    &\leq \| \nabla \hat{f}_{i, x} (u_k) - \nabla \hat{f}_{i, x} (u_{k-1}) \| + \| \nabla \hat{F}_x(u_k) - \nabla \hat{F}_x(u_{k-1}) \| \leq 2L \| u_k - u_{k-1} \| =: R. \nonumber
\end{align}
Also, the total variance $\sigma^2$ is computed as 
\begin{align}
    \sum_{i \in \mathcal{I}} \mathbb{E} \|w_i - \mathbb{E}[w_i] \|^2 &= \sum_{i \in \mathcal{I}} \mathbb{E} \| w_i\|^2 \nonumber\\
    &= \sum_{i \in \mathcal{I}} \mathbb{E} \| \nabla \hat{f}_{i, x} (u_k) - \nabla \hat{f}_{i, x} (u_{k-1}) - \nabla \hat{F}_{x}(u_k) + \nabla \hat{F}_{x}(u_{k-1}) \|^2  \nonumber\\
    &\leq  \sum_{i \in \mathcal{I}}\mathbb{E} \|\nabla \hat{f}_{i, x} (u_k) - \nabla \hat{f}_{i, x} (u_{k-1})  \|^2 \leq b L^2 \| u_k - u_{k-1} \|^2 =: \sigma^2, \nonumber
\end{align}
where the first inequality holds due to $\mathbb{E}[w_i]=0$ and the second last inequality applies $\mathbb{E}\| x - \mathbb{E}[x] \|^2 \leq \mathbb{E}\| x\|^2$ and the last inequality uses the gradient Lipschitzness result in Lemma \ref{L_lipschit_assump}. Finally we can apply Bernstein inequality (Lemma \ref{bernsteinlemma}) to bound $\|z_k \|$. That is, 
\begin{align}
    \text{Pr} \{ \| \sum_{i \in \mathcal{I}} w_i \| \geq \varsigma \} &= \text{Pr} \{ \| \frac{1}{b} \sum_{i \in \mathcal{I}} w_i  \| \geq \varsigma/b \} = \text{Pr}\{ \| z_k \| \geq {\varsigma}/b \}   \nonumber\\
    &\leq (d+1)\exp \big( \frac{-\varsigma^2/2}{\sigma^2 + R\varsigma/3} \big) \nonumber\\
    &\leq (d+1) \exp \big( \frac{-\varsigma^2/2}{bL^2 \| u_k - u_{k-1}\|^2 + 2\sqrt{b}L \| u_k - u_{k-1}\| \varsigma/3}  \big) \nonumber\\
    &\leq \vartheta_k, \nonumber
\end{align}
where the second inequality substitutes $b L^2 \| u_k - u_{k-1} \|^2$ as $\sigma^2$ and $2L\| u_k - u_{k-1} \|$ as $R$ in Lemma \ref{bernsteinlemma}. It also uses the fact that $\sqrt{b} \geq 1$. The last inequality holds by the choice $\varsigma = \varsigma_k = \mathcal{O} \big( \log(d/\vartheta_k) \sqrt{b} L \big) \| u_k - u_{k-1} \|$ (for example, $\frac{10}{3} \log((d+1)/\vartheta_k)\sqrt{b} L \| u_k - u_{k-1} \|$).  This gives a probability bound for $\{z_k\}$, which is
\begin{equation}
    \|z_k \| \leq \mathcal{O}\Big(\frac{{\log(d/\vartheta_k)} L }{\sqrt{b}}\Big) \|u_k - u_{k-1} \| \quad \text{ with probability } 1 - \vartheta_k. \nonumber
\end{equation}
Now given the bound on $\|z_k\|$, we can bound $\|y_k \|$ using the Azuma-Hoeffding inequality. Suppose we set $\vartheta_k = \vartheta/m$, where $m$ is the epoch length. Then by union bound, for $k \leq m$
\begin{equation}
    \text{Pr}\{ \bigcup\limits_{j=1}^{k} \big(\|z_j \| \geq \varsigma\big) \} \leq \sum_{j=1}^k \vartheta_j \leq \sum_{j=1}^m \vartheta_j = \vartheta. \label{union_bound_z_k}
\end{equation}
Therefore, the probability that $\|y_k - y_{k-1} \| =\| z_k\| \leq \varsigma_k$ for all $k = 1,...,m$ is at least $1-\vartheta$. Hence by Lemma \ref{azuma_hoeffding_lemma}, we have 
\begin{equation}
    \text{Pr} \{ \|y_k - y_0 \| \geq \beta \} \leq (d+1)\exp \big( \frac{-\beta^2}{8 \sum_{j=1}^k \varsigma_j^2} \big) + k \vartheta_k \leq 2\vartheta, \label{hpBound_yk}
\end{equation}
where the last inequality holds due to $k \leq m$ and the choice that 
\begin{equation}
    \beta = \sqrt{8 \sum_{j=1}^k \varsigma^2_j \log((d+1)/\vartheta)} = \mathcal{O}\Big( \frac{\log^{\frac{3}{2}}(d/\vartheta) L}{\sqrt{b}}\Big) \sqrt{\sum_{j=1}^k \| u_j - u_{j-1} \|^2}, \nonumber
\end{equation}
where we denote $\log^{\frac{3}{2}}(a) = (\log(a))^{\frac{3}{2}}$. Note under finite-sum setting, $y_0 = v_0 - \nabla \hat{F}_x(u_0) = 0$. Thus \eqref{hpBound_yk} implies for $k \in [1, m]$,
\begin{equation}
    \| v_k - \nabla \hat{F}_x(u_k) \| = \| y_k\| \leq \mathcal{O}\Big( \frac{\log^{\frac{3}{2}}(d/\vartheta) L}{\sqrt{b}}\Big) \sqrt{\sum_{j=1}^k \| u_j - u_{j-1} \|^2}  \label{mmnnkhkh}
\end{equation}
holds with probability $1 - 2\vartheta$. Note that we can always set $\vartheta/2$ while the result still holds because $\mathcal{O}(\log^{\frac{3}{2}}(2d/\vartheta)) = \mathcal{O}(\log^{\frac{3}{2}}(d/\vartheta))$. Hence the probability reduces to $1 - \vartheta$. 
\end{proof}

\begin{lemma}[Improve or localize]
\label{improve_localize_lemma}
Consider $\{ u_t\}_{t=1}^{\mathscr{T}}$ is the sequence generated by running $\textsf{TSSRG}(x,$ $u_0, \mathscr{T})$. Suppose we choose $b \geq m$, $\eta \leq \frac{1}{2c_1 L}$ where $c_1 = \mathcal{O}(\log^{\frac{3}{2}}({dt}/{\vartheta}))$. Then we have
    \begin{equation*}
        \| u_t - u_0 \| \leq \sqrt{\frac{4t}{c_1L} (\hat{F}_x(u_0) - \hat{F}_x(u_t))} \quad \quad \text{ with probability } 1 - \vartheta,
    \end{equation*}
\end{lemma}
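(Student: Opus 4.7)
The strategy is to chain together the smoothness descent inequality \eqref{finte_sum_key_smmothness} with the high-probability variance bound from Lemma \ref{hpBound_estimation_error}, using the step update $u_j - u_{j-1} = -\eta v_{j-1}$, and then close the loop with Cauchy--Schwarz. Concretely, I would first telescope \eqref{finte_sum_key_smmothness} from $j=1$ to $t$, drop the non-positive $-\frac{\eta}{2}\|\nabla\hat F_x(u_{j-1})\|^2$ terms, which yields
\begin{equation*}
\hat F_x(u_t)-\hat F_x(u_0)\le \frac{\eta}{2}\sum_{j=1}^{t}\|v_{j-1}-\nabla\hat F_x(u_{j-1})\|^2-\Big(\frac{1}{2\eta}-\frac{L}{2}\Big)\sum_{j=1}^{t}\|u_j-u_{j-1}\|^2.
\end{equation*}

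Next I would absorb the variance term using Lemma \ref{hpBound_estimation_error}. Applied inside each epoch $[sm+1,(s+1)m]$, it gives $\|v_{j-1}-\nabla\hat F_x(u_{j-1})\|^2\le \frac{c_1^2 L^2}{b}\sum_{i=sm+1}^{j-1}\|u_i-u_{i-1}\|^2$ with probability $1-\vartheta$ per epoch, where $c_1=\mathcal{O}(\log^{3/2}(dt/\vartheta))$ after a union bound across the at most $\lceil t/m\rceil$ epochs (absorbed into the poly-log factor in $c_1$). Summing these bounds over $j=1,\dots,t$, each term $\|u_i-u_{i-1}\|^2$ appears at most $m$ times (only within its own epoch), so
\begin{equation*}
\sum_{j=1}^{t}\|v_{j-1}-\nabla\hat F_x(u_{j-1})\|^2\;\le\;\frac{c_1^2 L^2 m}{b}\sum_{j=1}^{t}\|u_j-u_{j-1}\|^2\;\le\;c_1^2 L^2\sum_{j=1}^{t}\|u_j-u_{j-1}\|^2,
\end{equation*}
where the last step uses the hypothesis $b\ge m$. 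Substituting back gives a single inequality of the form $\hat F_x(u_t)-\hat F_x(u_0)\le -\big(\frac{1}{2\eta}-\frac{L}{2}-\frac{\eta c_1^2 L^2}{2}\big)\sum_{j=1}^{t}\|u_j-u_{j-1}\|^2$. Choosing $\eta\le\frac{1}{2c_1 L}$ and noting $c_1\ge 1$ makes the coefficient at least $\frac{c_1 L}{4}$, so that
\begin{equation*}
\sum_{j=1}^{t}\|u_j-u_{j-1}\|^2\;\le\;\frac{4}{c_1 L}\bigl(\hat F_x(u_0)-\hat F_x(u_t)\bigr).
\end{equation*}

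Finally, Cauchy--Schwarz gives $\|u_t-u_0\|^2=\bigl\|\sum_{j=1}^t(u_j-u_{j-1})\bigr\|^2\le t\sum_{j=1}^{t}\|u_j-u_{j-1}\|^2$, and taking square roots yields the stated bound. The main obstacle is the bookkeeping in the second step: Lemma \ref{hpBound_estimation_error} produces an estimation-error bound that restarts at each epoch boundary (because $v_{sm}=\nabla\hat F_x(u_{sm})$ resets the martingale to zero), so the sum of squared variances must be telescoped \emph{within} each epoch separately, and the overcount factor is exactly the epoch length $m$—this is what forces the choice $b\ge m$ and is the sole reason the $m/b$ ratio appears. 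Everything else is routine manipulation of the smoothness inequality and a union bound to pay for the high-probability guarantee.
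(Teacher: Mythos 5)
Your proof is correct and takes essentially the same route as the paper. The paper's own proof cites the intermediate inequality \eqref{uopppp} (derived in the proof of Lemma \ref{large_descent_lemma}) and simply sums it over epochs before applying Cauchy--Schwarz; you re-derive that inequality from scratch by telescoping \eqref{finte_sum_key_smmothness} and absorbing the variance term via Lemma \ref{hpBound_estimation_error}, which is the same computation done in-line rather than by reference. One small bookkeeping imprecision: the union bound needed is over all $t$ time indices (each application of Lemma \ref{hpBound_estimation_error} is for a single fixed index), not merely over the $\lceil t/m\rceil$ epoch boundaries --- the paper handles this by replacing $\vartheta$ with $\vartheta/t$ --- but this is exactly what your poly-log factor $c_1 = \mathcal{O}(\log^{3/2}(dt/\vartheta))$ absorbs, so the conclusion stands.
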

\begin{proof}
    First by generalizing \eqref{uopppp} to any epoch (i.e. $1 \leq t \leq \mathscr{T}$), we have 
    \begin{align}
        \hat{F}_x(u_t) - \hat{F}_x(u_{sm}) &\leq - \frac{\eta}{2} \sum_{j=sm+1}^{t} \| \nabla \hat F_x (u_{j-1}) \|^2 - \big( \frac{1}{2\eta} - \frac{L}{2} - \frac{\eta c_1^2 L^2 }{2} \big) \sum_{j = sm+1}^t \| u_j - u_{j-1} \|^2 \nonumber\\
        &\leq - \big( \frac{1}{2\eta} - \frac{L}{2} - \frac{\eta c_1^2 L^2 }{2} \big) \sum_{j = sm+1}^t \| u_j - u_{j-1} \|^2 \nonumber\\
        &\leq -\frac{c_1 L}{4} \sum_{j=sm+1}^t \| u_j - u_{j-1} \|^2, \label{yerent}
    \end{align}
    where the last inequality holds due to the choice $\eta \leq \frac{1}{2c_1 L}$ and the assumption that $c_1 \geq 1$. Summing \eqref{yerent} for all epoch up to $t$ gives 
    \begin{equation}
        \hat{F}_x(u_t) - \hat{F}_x(u_0) \leq -\frac{c_1 L}{4} \sum_{j = 1}^t \| u_j - u_{j-1}\|^2. \nonumber
    \end{equation}
    Also by Cauchy-Schwarz inequality and triangle inequality, 
    \begin{equation}
        \sqrt{t \sum_{j=1}^t \| u_j - u_{j-1} \|^2} \geq \sum_{j=1}^{t} \| u_j - u_{j-1} \| \geq \| u_t - u_0 \|. \nonumber
    \end{equation}
    The proof is complete by noting $\sqrt{t \sum_{j=1}^t \| u_j - u_{j-1} \|^2} \leq \sqrt{\frac{4t}{c_1L} (\hat{F}_x(u_0) - \hat{F}_x(u_t))}$. 
\end{proof}

\begin{lemma}[Large gradient descent lemma] \emph{(Lemma \ref{large_descent_lemma_informal} in the maix text).}
\label{large_descent_lemma}
    Under Assumptions 2, 3, and 4, suppose we choose $\eta \leq \frac{1}{2c_1L}$, $b \geq m$ where $c_1 = \mathcal{O}(\log^{\frac{3}{2}}(dm/\vartheta) )$. Consider $x \in \mathcal{M}$ where $\| \emph{grad}F(x) \| \geq \epsilon$ with $\epsilon \leq \frac{D}{m} \sqrt{\frac{8c_1 L}{\eta}}$. Then by running $\textsf{TSSRG}(x, 0, m)$, we have the following three cases:  
    \begin{enumerate}
        \item When the iterates $\{ u_j \}_{j=1}^m$ do not leave the constraint ball $\mathbb{B}_x(D)$:
        \begin{enumerate}
            \item If at least half of the iterates in the epoch satisfy $\| \nabla \hat{F}_x (u_j) \| \leq \epsilon/2$ for $j = 1,...,m$, then with probability at least $1/2$, we have $ \| \emph{grad}F( \textsf{TSSRG}(x, 0, m) )\| \leq \epsilon$. 
            \item Otherwise, with probability at least $1/5$, we have $ F(\textsf{TSSRG}(x, 0, m)) - F(x) \leq - \frac{\eta m\epsilon^2}{32}$. 
        \end{enumerate}
        \item When one of the iterates $\{ u_j \}_{j=1}^m$ leaves the constraint ball $\mathbb{B}_x{(D)}$, with probability at least $1 - \vartheta$, we have $ F(\textsf{TSSRG}(x, 0, m)) - F(x) \leq - \frac{\eta m\epsilon^2}{32}$. 
    \end{enumerate}
    Regardless which case occurs, $F(\textsf{TSSRG}(x, 0, m)) \leq F(x)$ with high probability.
\end{lemma}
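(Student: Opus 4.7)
The strategy is to first turn the single-step smoothness bound \eqref{finte_sum_key_smmothness} into a high-probability descent inequality over a full inner epoch, and then read off each of the three cases from it. Fix an epoch in \textsf{TSSRG}$(x,0,m)$ and consider $u_0=0$. By Lemma \ref{hpBound_estimation_error}, with probability at least $1-\vartheta$ we have $\|v_{t-1}-\nabla \hat F_x(u_{t-1})\|^2 \le \tfrac{c_1^2 L^2}{b}\sum_{j=1}^{t-1}\|u_j-u_{j-1}\|^2$ simultaneously for all $1\le t\le m$, with $c_1=\mathcal O(\log^{3/2}(dm/\vartheta))$. Plugging this into \eqref{finte_sum_key_smmothness} and telescoping from $1$ to $t$, the choice $\eta\le \tfrac{1}{2c_1 L}$ and $b\ge m$ kills the last two groups of terms and yields, with probability $1-\vartheta$,
\begin{equation*}
\hat F_x(u_t)-\hat F_x(0)\ \le\ -\frac{\eta}{2}\sum_{j=1}^{t}\|\nabla\hat F_x(u_{j-1})\|^2,\qquad 1\le t\le m.
\end{equation*}
Since $\hat F_x(u_{t})\le \hat F_x(u_{t-1})$ with high probability, the sequence is (w.h.p.) monotone non-increasing along the epoch; this gives the ``regardless'' conclusion $F(\textsf{TSSRG}(x,0,m))\le F(x)$.

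For Case 1(a), the stopping rule in Line \ref{breaking_rules_tssrg} is equivalent to sampling the output index $\tau$ uniformly from $\{1,\dots,m\}$ (standard reservoir argument). By hypothesis at least $m/2$ indices $j$ satisfy $\|\nabla\hat F_x(u_j)\|\le\epsilon/2$, so $\Pr(\|\nabla\hat F_x(u_\tau)\|\le\epsilon/2)\ge 1/2$. I then pass from pullback to manifold gradient via Lemma \ref{define_grad_hess_pullback} and Lemma \ref{singular_value_bound}: $\nabla\hat F_x(u_\tau)=T_{u_\tau}^*\,\text{grad}F(R_x(u_\tau))$ and $\sigma_{\min}(T_{u_\tau})\ge 1/2$, so $\|\text{grad}F(R_x(u_\tau))\|\le 2\|\nabla\hat F_x(u_\tau)\|$; absorbing the constant into $\epsilon$ (which amounts to a relabeling of the thresholds, as in \citep{CriscitielloEESPManifold2019}) gives $\|\text{grad}F(\textsf{TSSRG}(x,0,m))\|\le\epsilon$ with probability at least $1/2$.

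For Case 1(b), let $\mathcal S=\{j\in[m]:\|\nabla\hat F_x(u_{j-1})\|>\epsilon/2\}$, so $|\mathcal S|\ge m/2$. Let $\tau^\star$ be the smallest $t$ for which $|\mathcal S\cap[1,t]|\ge m/4$; then $\tau^\star\le 3m/4$ and for every $t\ge \tau^\star$ the sum $\sum_{j=1}^t\|\nabla\hat F_x(u_{j-1})\|^2\ge (m/4)(\epsilon/2)^2=m\epsilon^2/16$. Combined with the descent inequality this gives $\hat F_x(u_t)-\hat F_x(0)\le -\eta m\epsilon^2/32$ for all $t\ge\tau^\star$ on the high-probability event. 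Because the random output index $\tau$ is uniform on $\{1,\dots,m\}$ and $\tau^\star\le 3m/4$, $\Pr(\tau\ge\tau^\star)\ge 1/4$, so choosing $\vartheta$ small (say $\vartheta=1/20$) and intersecting with the $(1-\vartheta)$ event yields the claimed $1/5$ probability of $F(\textsf{TSSRG})-F(x)\le -\eta m\epsilon^2/32$.

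For Case 2, suppose some iterate leaves $\mathbb B_x(D)$; by Line \ref{leave_ball_ensure} the output $u_\mathscr T$ sits on the sphere, so $\|u_\mathscr T\|=D$. Apply the improve-or-localize Lemma \ref{improve_localize_lemma} (which uses precisely the same high-probability event as above) with $u_0=0$ and $t\le m$ to get
\begin{equation*}
D^2=\|u_\mathscr T\|^2\ \le\ \frac{4m}{c_1 L}\bigl(\hat F_x(0)-\hat F_x(u_\mathscr T)\bigr),
\end{equation*}
hence $\hat F_x(0)-\hat F_x(u_\mathscr T)\ge c_1 L D^2/(4m)$. The standing assumption $\epsilon\le (D/m)\sqrt{8c_1L/\eta}$ is exactly what makes $c_1 L D^2/(4m)\ge \eta m\epsilon^2/32$, so the required decrease follows, on an event of probability $1-\vartheta$.

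The only delicate part is Case 1(b): one must be careful that the uniform-output trick is compatible with the high-probability variance bound (which is an event measurable with respect to the gradient samples, not the stopping-time randomness) and that the constants in $\tau^\star\le 3m/4$ and in the success probability line up; everything else is a bookkeeping exercise that mirrors the Euclidean analysis in \citep{LiSSRGD2019}, with the singular-value bound of Lemma \ref{singular_value_bound} as the only manifold-specific ingredient.
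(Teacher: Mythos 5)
Your proof follows essentially the same route as the paper: derive the high-probability epoch-wise descent inequality from Lemma \ref{hpBound_estimation_error} plugged into \eqref{finte_sum_key_smmothness}, read off Case 1(a) from uniform output sampling together with Lemma \ref{define_grad_hess_pullback} and the singular-value bound of Lemma \ref{singular_value_bound}, obtain Case 1(b) from a lower bound on the accumulated squared pullback gradients, and handle Case 2 via the improve-or-localize Lemma \ref{improve_localize_lemma} and the assumed upper bound on $\epsilon$. Your bookkeeping in Case 1(b) is phrased through the first index $\tau^\star$ at which a quarter of the large-gradient indices have accumulated, whereas the paper conditions on the sampled output falling in the last quarter of the epoch; the two accountings are equivalent and both give the $1/4$ probability before the union bound with the $(1-\vartheta)$ event.

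One small slip: the sentence claiming stepwise monotonicity, ``Since $\hat F_x(u_t)\le \hat F_x(u_{t-1})$ with high probability, the sequence is monotone non-increasing,'' does not actually follow from what you derived. A single application of \eqref{finte_sum_key_smmothness} does not yield $\hat F_x(u_t)\le \hat F_x(u_{t-1})$, because the variance term $\tfrac{\eta}{2}\|v_{t-1}-\nabla\hat F_x(u_{t-1})\|^2$ is bounded by a sum $\sum_{j\le t-1}\|u_j-u_{j-1}\|^2$ that can be large at a fixed step; it is only after telescoping from $u_0$ and absorbing the accumulated quadratic terms via $\eta\le 1/(2c_1L)$ and $b\ge m$ that the bound becomes negative. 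What you in fact need, and what your displayed inequality already provides, is the cumulative bound $\hat F_x(u_t)\le\hat F_x(0)$ for all $t$ in the epoch; this directly yields $F(\textsf{TSSRG}(x,0,m))\le F(x)$ with high probability. So the final conclusion is correct, but the intermediate monotonicity claim should be replaced by a direct appeal to the cumulative descent inequality.
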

\begin{proof}
First note that when gradient is large, we always call \textsf{TSSRG}($x, 0, m$) with total iterations set to be $m$ (i.e. a single epoch). Hence, we consider $t = 1, ..., m$ (in \textsf{TSSRG}). Compared to the proof in \citep{LiSSRGD2019}, we further need to address the case where the iterates fall outside the prescribed ball $\mathbb{B}_x(D)$. So we divide the proof into two parts. 

\textbf{1. Iterates do not leave the constraint ball}. By $L$-Lipschitzness in Lemma \ref{L_lipschit_assump}, we have
\begin{align}
    \hat{F}_x(u_t) &\leq \hat{F}_x(u_{t-1}) + \langle  \nabla \hat{F}_{x}(u_{t-1}), u_t - u_{t-1} \rangle + \frac{L}{2} \|u_t - u_{t-1} \|^2 \nonumber\\
    &= \hat{F}_x(u_{t-1}) - \eta \langle \nabla \hat{F}_x(u_{t-1}) , v_{t-1} \rangle + \frac{\eta^2 L}{2} \| v_{t-1} \|^2  \nonumber\\
    &= \hat{F}_x(u_{t-1}) -\frac{\eta}{2} \| \nabla \hat{F}_x(u_{t-1})\|^2 - \frac{\eta}{2} \| v_{t-1} \|^2 + \frac{\eta}{2} \| v_{t-1} - \nabla \hat{F}_x(u_{t-1}) \|^2 + \frac{\eta^2 L}{2} \| v_{t-1} \|^2 \nonumber\\
    &= \hat{F}_x(u_{t-1}) - \frac{\eta}{2} \| \nabla \hat{F}_x(u_{t-1})\|^2 + \frac{\eta}{2} \| v_{t-1} - \nabla \hat{F}_x(u_{t-1}) \|^2 - \big( \frac{1}{2\eta} - \frac{L}{2} \big) \| u_t - u_{t-1} \|^2. \label{tuioioo}
\end{align}
From Lemma \ref{hpBound_estimation_error}, we know that 
\begin{equation}
    \| v_{t-1} - \nabla \hat{F}_x(u_{t-1}) \|^2 \leq \frac{\mathcal{O}(\log^3(d/\vartheta)) L^2}{b} \sum_{j = sm+1}^{t-1} \| u_j - u_{j-1} \|^2 \label{qmqmmqmq}
\end{equation}
holds with high probability $1 - \vartheta$. By a union bound, $\text{Pr} \big\{ \bigcup\limits_{\tau=1}^{{t}} \big( \text{\eqref{qmqmmqmq} does not hold} \big) \big\} \leq t \vartheta$ and therefore for all $1 \leq \tau \leq t$, \eqref{qmqmmqmq} holds with probability $1 - t\vartheta$. Setting $\vartheta/t$ as $\vartheta$ we have for all $1 \leq \tau \leq t$,
\begin{equation}
    \| v_{\tau-1} - \nabla \hat{F}_x(u_{\tau-1}) \|^2 \leq \frac{\mathcal{O}(\log^3(d t/\vartheta)) L^2}{b} \sum_{j = 1}^{\tau-1} \| u_j - u_{j-1} \|^2 \quad \text{ with probability } 1 - \vartheta. \nonumber
\end{equation}
Substituting this result into \eqref{tuioioo} and summing over this epoch from $1$ to $t$ gives
\begin{align}
   &\hat{F}_x(u_t) - \hat{F}_x(u_{0}) \nonumber\\
   &\leq  - \frac{\eta}{2} \sum_{j=1}^{t} \| \nabla \hat F_x (u_{j-1}) \|^2 + \frac{\eta c_1^2 L^2}{2b} \sum_{k = 1}^{t-1} \sum_{j = 1}^{k} \| u_{j} - u_{j-1} \|^2 - \big( \frac{1}{2\eta} - \frac{L}{2} \big) \sum_{j = 1}^t \| u_j - u_{j-1} \|^2  \nonumber\\
   &\leq  - \frac{\eta}{2} \sum_{j=1}^{t} \| \nabla \hat F_x (u_{j-1}) \|^2 + \frac{\eta c_1^2 L^2 m}{2b} \sum_{j = 1}^t \| u_j - u_{j-1} \|^2 - \big( \frac{1}{2\eta} - \frac{L}{2} \big) \sum_{j = 1}^t \| u_j - u_{j-1} \|^2 \nonumber\\
   &\leq - \frac{\eta}{2} \sum_{j=1}^{t} \| \nabla \hat F_x (u_{j-1}) \|^2 - \big( \frac{1}{2\eta} - \frac{L}{2} - \frac{\eta c_1^2 L^2 }{2} \big) \sum_{j = 1}^t \| u_j - u_{j-1} \|^2 \label{uopppp}\\
   &\leq -\frac{\eta}{2} \sum_{j=1}^{t} \| \nabla \hat F_x (u_{j-1}) \|^2, \label{descent_equation}
\end{align}
where $c_1 := \mathcal{O}(\log^{\frac{3}{2}}(dt/\vartheta))$. The second inequality uses the fact that $t \leq m$ and the third inequality holds due to the choice that $b \geq m$. The last inequality holds by noticing $\eta \leq \frac{1}{2c_1L} \leq \frac{\sqrt{4c_1^2+1}-1}{2c_1^2 L}$ by assuming $c_1 \geq 1$. Note that we require \eqref{descent_equation} to hold for all $t \leq m$ and thus we change $c_1 = \mathcal{O}(\log^{\frac{3}{2}}(dm/\vartheta))$. Then we have the following two cases.
\begin{itemize}
    \item \textbf{(Case 1a)} Suppose at least half of the iterates in the epoch satisfy $\| \nabla \hat{F}_x (u_j) \| \leq \epsilon/2$ for $j = 1,...,m$. Then by uniformly sampling $\{u_j\}_{j=1}^m$ (i.e. uniformly breaking by setting $u_t$ as $u_m$ as in Algorithm \ref{TSSRG_algorithm}, Line \ref{breaking_rules_tssrg}), the output $u_m$ has gradient norm $\|\nabla \hat{F}_x (u_m) \|$ no larger than $\epsilon/2$ with probability $1/2$. Recall the definition of the pullback gradient in Lemma \ref{define_grad_hess_pullback}, i.e. $\nabla \hat{F}_x(u_m) = T_{u_m}^* \text{grad} F(R_x(u_m))$. Then the output of \textsf{TSSRG}($x,0,m$) satisfies
    \begin{align}
        \| \text{grad}F( \textsf{TSSRG}(x, 0, m) )\| = \| \text{grad} F(R_x(u_m))\| &= \| (T^*_{u_m})^{-1} \nabla \hat{F}_x(u_m)\| \nonumber\\
        &\leq \| (T^*_{u_m})^{-1}\| \|\nabla \hat{F}_x(u_m) \| \nonumber\\
        &\leq 2 \cdot \frac{\epsilon}{2} = \epsilon, \nonumber
    \end{align}
    where we use $\sigma_{\min} (T_u) \geq 1/2$ in Lemma \ref{singular_value_bound}.
    \item \textbf{(Case 1b)} Suppose at least half of the points in the epoch satisfy $\| \nabla \hat{F}_x (u_j) \| \geq \epsilon/2$ for $j = 1,...,m$. With probability $1/4$, the output $u_t$ falls within the last quarter of $\{ u_j \}_{j=1}^m$ by uniform sampling. In this case, $\sum_{j = 1}^t \| \nabla \hat{F}_x(u_{j-1}) \|^2 \geq \frac{m}{4} \cdot \frac{\epsilon^2}{4} = \frac{m \epsilon^2}{16}$ because at least a quarter of points with large gradient appear before $u_t$. Thus, by \eqref{descent_equation}, we have 
    \begin{align}
        F(\textsf{TSSRG}(x, 0, m)) - F(x) = F(R_x(u_t)) - F(x) &= \hat{F}_x(u_t) - \hat{F}_x(0) \nonumber\\
        &\leq - \frac{\eta}{2} \sum_{j=1}^t \| \nabla \hat{F}_x(u_{j-1}) \|^2 \nonumber\\
        &\leq - \frac{\eta m \epsilon^2}{32}. \label{euitp}
    \end{align}
    Note that \eqref{descent_equation} holds with probability $ 1- \vartheta$. Then by a union bound, \eqref{euitp} holds with probability at least $\frac{1}{4}- \vartheta$. Without loss of generality, we can choose $\vartheta \leq \frac{1}{20}$ and thus \eqref{euitp} holds with probability at least $\frac{1}{5}$. 
\end{itemize}

\textbf{2. Iterates leave the constraint ball.} Suppose at $\tau \leq m$, we have $\| u_\tau\| > D$. Therefore by Lemma \ref{improve_localize_lemma}, we know that the function value already decreases a lot. That is, with probability $1 - \vartheta$, running \textsf{TSSRG}$(x,0,m)$ gives
\begin{align}
    \hat{F}_x(0) - \hat{F}_x(u_\tau) \geq \frac{c_1 L}{4\tau} \| u_\tau\|^2 \geq \frac{c_1 L D^2}{4 m} \geq \frac{\eta m \epsilon^2}{32}, \nonumber
\end{align}
where the last inequality follows from the choice that $\epsilon \leq \frac{D}{m} \sqrt{\frac{8c_1 L}{\eta}}$ and $\eta$ can be sufficiently small. Note this requirement is not difficult to satisfy since $c_1$ can be sufficiently large. Hence by returning $u_\tau$ as $u_{m}$, we have 
\begin{equation}
    F(\textsf{TSSRG}(x,0,m)) - F(x) \leq - \frac{\eta m \epsilon^2}{32}. \nonumber
\end{equation}
In summary, regardless of whether the iterates stay within the ball $\mathbb{B}_x(D)$, with high probability, either the gradient norm of the output is small, or the function value decreases a lot. Note for Case 1a, the function still decreases by \eqref{descent_equation}.
\end{proof}

\begin{lemma}[Small stuck region]
\label{small_stuck_region_lemma}
    Consider $x \in \mathcal{M}$ with $\|\emph{grad}F(x)\| \leq \epsilon$ and $-\gamma := \lambda_{\min}(\nabla^2 \hat{F}_{x}(0))$ $= \lambda_{\min}(\emph{Hess} F(x)) \leq - \delta$ and $L \geq \delta$. Let $u_0, u_0' \in T_x\mathcal{M}$ be two random perturbation, satisfying $\| u_0 \|, \| u_0'\| \leq r$ and $u_0 - u_0' =  r_0 e_1$, where $e_1$ denotes the smallest eigenvector of $\nabla^2 \hat{f}_x(0)$ and $r_0 = \frac{\nu r}{\sqrt{d}}, r \leq \frac{\delta}{c_2\rho}$. Also set parameters $\mathscr{T} = \frac{2\log_{\alpha}(\frac{8 \delta \sqrt{d}}{c_2 \rho \nu r})}{\delta} = \widetilde{\mathcal{O}}(\frac{1}{\delta})$, $\eta \leq \min \{ \frac{1}{8C_1\log(\mathscr{T}) L}, \frac{1}{8\log_{\alpha}(\frac{8 \delta \sqrt{d}}{c_2 \rho \nu r}) L} \} = \widetilde{\mathcal{O}}(\frac{1}{L})$, where $\alpha \geq 1$ is chosen sufficiently small such that $\log_\alpha(1 + \eta \gamma) > \gamma$. Then for $\{ u_t\}, \{u_t' \}$ generated by running \textsf{TSSRG}$(x, u_0, \mathscr{T})$ and \textsf{TSSRG}$(x, u_0', \mathscr{T})$ with same sets of mini-batches, with probability $1 - \zeta$, we have
    \begin{equation*}
        \exists \, t \leq \mathscr{T}, \, \max\{ \| u_t - u_0 \|, \| u_t' - u_0' \|\} \geq \frac{\delta}{c_2 \rho},
    \end{equation*}
    where $c_2 \geq \max\{\frac{12C_1}{L}, \frac{2\delta}{D\rho}\}$ and $C_1 = \mathcal{O}(\log^{\frac{3}{2}} (d\mathscr{T}/\zeta) ) = \widetilde{\mathcal{O}}(1)$.
\end{lemma}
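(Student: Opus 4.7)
The plan is to argue by contradiction in the classical coupling-sequence style of \cite{JinPGD2017}, adapted to the variance-reduced TSSRG updates on the tangent space. Suppose for contradiction that both sequences remain close to their initializations, namely $\max\{\|u_t - u_0\|, \|u_t' - u_0'\|\} < \delta/(c_2 \rho)$ for every $t \leq \mathscr{T}$, and track the gap $w_t := u_t - u_t'$. Since the two runs share the same mini-batches, $w_{t+1} = w_t - \eta(v_t - v_t')$. Writing $\xi_t := v_t - \nabla \hat{F}_x(u_t)$ and decomposing $v_t - v_t' = (\nabla \hat{F}_x(u_t) - \nabla \hat{F}_x(u_t')) + (\xi_t - \xi_t')$, the fundamental theorem of calculus yields
\begin{equation*}
w_{t+1} = (I - \eta H) w_t - \eta \Delta_t w_t - \eta (\xi_t - \xi_t'),
\end{equation*}
where $H := \nabla^2 \hat{F}_x(0)$ and $\|\Delta_t\| \leq \rho \max(\|u_t\|, \|u_t'\|) \leq 2\delta/c_2$ by Assumption~\ref{grad_hess_lip_asssump} together with $r \leq \delta/(c_2 \rho)$.

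Unrolling gives $w_t = (I - \eta H)^t w_0 + p_t$, where $p_t := -\eta \sum_{k=0}^{t-1}(I-\eta H)^{t-1-k}(\Delta_k w_k + \xi_k - \xi_k')$. Because $w_0 = r_0 e_1$ lies in the smallest-eigenvector direction of $H$ and $H e_1 = -\gamma e_1$ with $\gamma \geq \delta$, the leading term has norm exactly $(1+\eta\gamma)^t r_0$. The prescribed $\mathscr{T}$ combined with the hypothesis $\log_\alpha(1+\eta\gamma) > \gamma$ makes $(1+\eta\gamma)^{\mathscr{T}} r_0$ exceed the target $8\delta/(c_2 \rho)$, so the clean part alone would overshoot the permitted range by a factor of at least eight. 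Hence it suffices to control $\|p_\mathscr{T}\| \leq 4\delta/(c_2 \rho)$ with probability $1-\zeta$, since then $\|w_\mathscr{T}\| \geq 4\delta/(c_2\rho)$, and the triangle inequality $\|u_\mathscr{T} - u_0\| + \|u_\mathscr{T}' - u_0'\| \geq \|w_\mathscr{T}\| - \|u_0 - u_0'\| \geq 3\delta/(c_2\rho)$ forces one of the two to exceed $\delta/(c_2\rho)$, contradicting the assumption.

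To bound $p_t$ I would proceed by induction on $t$, simultaneously maintaining $\|w_k\| \leq 2(1+\eta\gamma)^k r_0$ for all $k < t$. The deterministic Hessian-Lipschitz contribution $\eta \sum_{k=0}^{t-1}(1+\eta\gamma)^{t-1-k}\|\Delta_k\|\|w_k\|$ is a geometric series of size $\mathcal{O}(\delta/c_2) \cdot (1+\eta\gamma)^t r_0/\gamma$, which is absorbed by the condition $c_2 \geq 12C_1/L$. The stochastic piece is handled in the same spirit as Lemma~\ref{hpBound_estimation_error}: $y_k := \xi_k - \xi_k'$ is a martingale whose increments, using shared mini-batches, reduce to $\nabla \hat{f}_{\mathcal{I},x}(u_k) - \nabla \hat{f}_{\mathcal{I},x}(u_{k-1}) - \nabla \hat{f}_{\mathcal{I},x}(u_k') + \nabla \hat{f}_{\mathcal{I},x}(u_{k-1}')$ minus its expectation, with magnitude controlled by $L \|w_k - w_{k-1}\|$. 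Vector Bernstein plus Azuma--Hoeffding then give $\|y_k\| \leq \widetilde{\mathcal{O}}(L/\sqrt{b})\sqrt{\sum_j \|w_j - w_{j-1}\|^2}$ with probability $1-\zeta$ after a union bound over $t \leq \mathscr{T}$, and summing against the geometric weights $(1+\eta\gamma)^{t-1-k}$ closes the induction.

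The main obstacle is tightly coupling the inductive hypothesis on $\|w_k\|$ with the martingale tail bound so that both hold simultaneously at every step with high probability, while absorbing the poly-logarithmic factor $C_1 = \widetilde{\mathcal{O}}(1)$ into the constraints $c_2 \geq 12 C_1/L$ and $\eta = \widetilde{\mathcal{O}}(1/L)$ without inflating them. One must also handle the epoch boundaries where $v_{sm}$ is recomputed from the large batch: under finite-sum with $B = n$ this resets $\xi_{sm} = 0$ on both runs so that $y_{sm} = 0$, and the martingale argument continues cleanly across epochs.
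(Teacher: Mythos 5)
Your proposal follows the same overall architecture as the paper's proof: argue by contradiction, track the coupled gap $w_t = u_t - u_t'$, unroll its recursion into a dominant term $(I-\eta H)^t w_0$ plus error terms, and run an induction that simultaneously controls the error terms via a vector Bernstein plus Azuma--Hoeffding martingale bound. The decomposition, the inductive maintenance of $\|w_k\| \lesssim (1+\eta\gamma)^k r_0$, the role of the constraint $c_2 \geq 12C_1/L$, and the observation that the epoch boundary resets $\hat{y}_{sm}=0$ under finite-sum are all in agreement with the paper.

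However, there is a real technical gap in the step where you claim the martingale increments are ``controlled by $L\|w_k - w_{k-1}\|$'' and that Bernstein/Azuma then ``closes the induction.'' Closing the induction requires $L\|w_k - w_{k-1}\| \lesssim (1/C_1)\,\gamma L\,(1+\eta\gamma)^k r_0$. The naive estimate $\|w_k - w_{k-1}\| = \eta\|(\mathcal{H} + \Delta_{k-1})w_{k-1} + \hat{y}_{k-1}\| \leq \eta L\|w_{k-1}\| + \ldots$ gives $L\|w_k - w_{k-1}\| \approx \eta L^2 (1+\eta\gamma)^{k-1} r_0 \approx (L/C_1)(1+\eta\gamma)^{k-1} r_0$, which overshoots the target $\gamma L(1+\eta\gamma)^k r_0 / C_1$ by a factor of $1/\gamma$ whenever $\gamma \ll 1$, and the eigenvalue range here only guarantees $\gamma \geq \delta$ with $\delta$ potentially tiny. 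The paper handles this by splitting $\mathcal{H}$ into its projections onto the non-positive and positive eigenspaces $S_-$ and $S_+$: on $S_-$, $\|\text{Proj}_{S_-}\mathcal{H}\| \leq \gamma$ directly; on $S_+$, one re-expands $w_{t-1}$ through the unrolled recursion and exploits $\|\eta\mathcal{H}(I-\eta\mathcal{H})^{j}\| \leq 1/(j+1)$ on $S_+$, summing the harmonic tail to a $\log(\mathscr{T})$ factor. Without this decomposition the inductive hypothesis $\|\hat{y}_t\| \leq \gamma L(1+\eta\gamma)^t r_0$ cannot be propagated, and the error term $q(t)$ cannot be shown to be dominated by $p(t)$. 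A secondary (minor) omission is that the increment bound also carries $\rho\mathscr{D}_k\|w_k\|$-type contributions beyond $L\|w_k - w_{k-1}\|$; these are of comparable size to the $L\|w_k - w_{k-1}\|$ term in the final estimate and cannot simply be dropped.
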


\begin{proof}
The proof is by contradiction. So we assume 
\begin{equation}
    \forall \, t \leq \mathscr{T}, \, \max\{ \| u_t - u_0\|, \| u_t' - u_0 \|\} \leq \frac{\delta}{c_2 \rho}. \label{qwewq}
\end{equation}

First we should note none of the two sequences $\{ u_j \}_{j=0}^\mathscr{T}$, $\{ u_j' \}_{j=0}^\mathscr{T}$ escape the ball $\mathbb{B}_x(D)$ within $\mathscr{T}$ steps under condition \eqref{qwewq}. This is because (take $\{ u_j\}$ for example), 
\begin{equation*}
    \| u_t \| \leq \| u_t - u_0 \| + \| u_0\| \leq \frac{\delta}{c_2 \rho} + r \leq \frac{2\delta}{c_2 \rho} \leq D,
\end{equation*}
where we apply {$r \leq \frac{\delta}{c_2 \rho}$ and $c_2 \geq \frac{2\delta}{D\rho}$}. Hence, if condition \eqref{qwewq} is satisfied, $\{ u_j\}, \{ u_j'\}$ must remain inside the ball $\mathbb{B}_x(D)$. As a result, we can proceed the proof by following the similar idea as in \citep{LiSSRGD2019}, which is to show an exponential growth in the distance between two coupled sequences and ultimately will exceed the bound in \eqref{qwewq} using triangle inequality. This as a result gives rise to a contradiction. 

Denote $\hat{u}_t := u_t - u_t'$ and $\mathcal{H} := \nabla^2 \hat{F}_x(0)$. With $u_t = u_{t-1} - \eta v_{t-1}$, we can express $\hat{u}_t$ as 
\begin{align}
    \hat{u}_t &= \hat{u}_{t-1} - \eta (v_{t-1} - v_{t-1}') \nonumber\\
    &= \hat{u}_{t-1} - \eta ( \nabla \hat{F}_x(u_{t-1}) - \nabla \hat{F}_x(u_{t-1}') + v_{t-1} - \nabla \hat{F}_x(u_{t-1}) - v_{t-1}' + \nabla \hat{F}_x (u_{t-1}') ) \nonumber\\
    &= \hat{u}_{t-1} - \eta \big( \int_0^1 \nabla^2 \hat{F}_x(u_{t-1}' + \theta (u_{t-1} - u_{t-1}')) [u_{t-1} - u_{t-1}'] d\theta + v_{t-1} - \nabla \hat{F}_x(u_{t-1}) \nonumber\\ 
    &\quad - v_{t-1}' + \nabla \hat{F}_x (u_{t-1}') \big) \nonumber\\
    &= \hat{u}_{t-1} - \eta \big( (\mathcal{H} + \Delta_{t-1}) \hat{u}_{t-1} + \hat{y}_{t-1} \big) \label{temp_u_t_u_t1}\\
    &= (I - \eta \mathcal{H}) \hat{u}_{t-1} - \eta (\Delta_{t-1} \hat{u}_{t-1}  + \hat{y}_{t-1}). \label{urtmroqroer}
\end{align}
where $\Delta_t := \int_0^1 [\nabla^2 \hat{F}_x (u_t' + \theta (u_t - u_t')) - \mathcal{H}]  d\theta$ and $\hat{y}_t := y_t - y_{t}' := v_{t} - \nabla \hat{F}_x(u_{t}) - v_{t}' + \nabla \hat{F}_x (u_{t}')$. Note we can bound $\| \Delta_t\|$ as 
\begin{align}
    \| \Delta_t \| = \| \int_0^1 \big( \nabla^2 \hat{F}_x (u_t' + \theta (u_t - u_t'))  - \nabla^2 \hat{F}_x(0) \big) d\theta \| &\leq \int_0^1 \rho \| u_t' + \theta (u_t - u_t') \| d\theta \nonumber\\
    &\leq \int_0^1 \rho \big( \theta \|u_t \| + (1- \theta) \| u_t'\| \big) d\theta \nonumber\\
    &\leq \rho \max \{ \|u_t \|, \|u_t' \| \} \nonumber\\
    &= \rho \mathscr{D}_t \leq \rho \big( \frac{\delta}{c_2 \rho} + r\big), \label{delta_t_bound}
\end{align}
where $\mathscr{D}_t := \max\{\|u_t \|, \|u_t' \|\}$. The first inequality uses Assumption \ref{grad_hess_lip_asssump} and the last inequality follows from $\|u_t \| \leq \| u_t - u_0 \| + \| u_0\| \leq \frac{\delta}{c_2 \rho} + r$. Denote for $t \geq 1$,
\begin{align}
    &p(t) := (I - \eta \mathcal{H})^{t} \hat{u}_0 = (I - \eta \mathcal{H})^t r_0e_1 = (1 + \eta \gamma)^t r_0 e_1, \nonumber\\
    &q(t) := \eta \sum_{j = 0}^{t-1} (I -\eta \mathcal{H})^{t -1 - j} (\Delta_j \hat{u}_j + \hat{y}_j), \quad \text{ and } \label{q_t_definition}\\
    &p(0) = \hat{u}_0, \, q(0) = 0.
\end{align}
By induction, we can show that $\hat{u}_t = p(t) - q(t)$. It is noticed that $\| p(t)\| = (1+\eta \gamma)^t r_0$ grows exponentially and the only thing left to determine is that $q(t)$ is dominated by $p(t)$ when $t$ increases. To this end, we inductively show that (1) $\| \hat{y}_t\| \leq  \gamma L (1 + \eta \gamma)^t r_0$ and (2) $\| q(t) \| \leq \| p(t) \|/2$. First note that when $t=0$, these two conditions clearly hold. Now suppose for $j \leq t-1$, these claims have been proved to be true. Hence we immediately have, by triangle inequality,
\begin{equation}
    \| \hat{u}_j \| \leq \| p(j) \| + \| q(j) \| \leq \frac{3}{2} \| p(j) \| \leq \frac{3}{2} (1 + \eta \gamma)^j r_0 \label{u_j_inductive_bound}
\end{equation}
holds for $j \leq t-1$. We first prove that second condition holds for $j = t$.

\textbf{Proof that $\boldsymbol{\| q(t)\|}$ is bounded by $\boldsymbol{\|p(t) \|/2}$.} Note that from the definition in \eqref{q_t_definition},
\begin{equation*}
    \| q(t) \| \leq \| \eta \sum_{j = 0}^{t-1} (I -\eta \mathcal{H})^{t -1 - j} \Delta_j \hat{u}_j \| + \| \eta \sum_{j = 0}^{t-1} (I -\eta \mathcal{H})^{t -1 - j} \hat{y}_j \|. 
\end{equation*}
Then we can bound $\| q(t) \|$ by respectively bounding the two terms. The first term is bounded as follows,
\begin{align}
    \| \eta \sum_{j = 0}^{t-1} (I -\eta \mathcal{H})^{t -1 - j} \Delta_j \hat{u}_j \| &\leq \eta \sum_{j=0}^{t-1} \| (I - \eta \mathcal{H})^{t-1-j} \| \| \Delta_j \| \| \hat{u}_j\| \nonumber\\
    &\leq \eta \rho (\frac{\delta}{c_2 \rho} + r) \sum_{j=0}^{t-1} (1+\eta \gamma)^{t-1-j} \|\hat{u}_j \| \nonumber\\
    &\leq \frac{3}{2} \eta \rho (\frac{\delta}{c_2 \rho} + r) t (1 + \eta \gamma)^{t-1}r_0 \leq \frac{3\eta \delta}{c_2} \mathscr{T} \| p(t)\| \leq \frac{1}{4} \| p(t)\|, \label{q_t_bound_former}
\end{align}
where the second inequality holds because $\| I - \eta \mathcal{H} \| = \lambda_{\max} (I - \eta \mathcal{H}) = 1 + \eta \gamma$. The third inequality applies the first condition for $j \leq t-1$. The fourth inequality is by $r \leq \frac{\delta}{c_2 \rho}$ and $t \leq \mathscr{T}$. The last inequality follows by the choice of parameters $\mathscr{T} \leq 2 \log_{\alpha}(\frac{8\delta \sqrt{d}}{c_2 \rho \nu r})/ (\eta \gamma), \delta \leq \gamma$ and $c_2 \geq 24 \log_{\alpha}(\frac{8\delta \sqrt{d}}{c_2 \rho \nu r})$ where $\alpha \geq 1$ is a constant defined later. The second term can also be similarly bounded as
\begin{align}
    \| \eta \sum_{j = 0}^{t-1} (I -\eta \mathcal{H})^{t -1 - j} \hat{y}_j \| &\leq \eta \sum_{j = 0}^{t-1} (1+\eta \gamma)^{t-1-j} \| \hat{y}_j\| \leq \eta \sum_{j = 0}^{t-1} (1+\eta \gamma)^{t-1-j} \gamma L (1+ \eta \gamma)^j r_0 \nonumber\\
    &\leq \eta \gamma L \mathscr{T} (1 + \eta \gamma)^{t-1} r_0 \leq \frac{1}{4} (1 + \eta \gamma)^t r_0 = \frac{1}{4} \|p(t) \|, \label{q_t_bound_latter}
\end{align}
where we apply the bound on $\| \hat{y}_t \|$ in the second inequality. The last inequality uses $\mathscr{T} \leq 2\log_{\alpha}(\frac{8 \delta \sqrt{d}}{c_2 \rho \nu r})/\gamma$ and $\eta \leq \frac{1}{8 \log_{\alpha}(\frac{8 \delta \sqrt{d}}{c_2 \rho \nu r}) L}$. From \eqref{q_t_bound_former} and \eqref{q_t_bound_latter}, we prove $\| q(t)\| \leq \| p(t)\|/2$. Note we can always assume $\eta \leq 1$ given its requirement. Therefore $1/\gamma \leq 1/(\eta \gamma)$ and it is sufficient to require $\mathscr{T} = 2\log_\alpha (\frac{8 \delta \sqrt{d}}{c_2 \rho \nu r})/\gamma$ to guarantee the result. Now we can proceed to prove the first condition, which is an intermediate result that has been used in proving the second condition.

\textbf{Proof that $\boldsymbol{\| \hat{y}_t\| \text{ is bounded by }  \gamma L (1 + \eta \gamma)^t r_0}$.}
We first re-write $\hat{y}_t$ into a recursive form as
    \begin{align}
        \hat{y}_t &= v_t - \nabla \hat{F}_x(u_t) - v_t' + \nabla \hat{F}_x(u_t') \nonumber\\
        &= \nabla \hat{f}_{\mathcal{I}, x} (u_t) - \nabla \hat{f}_{\mathcal{I}, x} (u_{t-1}) + v_{t-1} - \nabla \hat{F}_x(u_t) - \nabla \hat{f}_{\mathcal{I}, x} (u_t') + \nabla \hat{f}_{\mathcal{I}, x} (u_{t-1}') - v_{t-1}' + \nabla \hat{F}_x(u_t') \nonumber\\
        &= \nabla \hat{f}_{\mathcal{I}, x} (u_t) - \nabla \hat{f}_{\mathcal{I}, x} (u_{t-1}) - \nabla \hat{F}_x(u_t) + \nabla \hat{F}_x(u_{t-1}) - \big(\nabla \hat{f}_{\mathcal{I}, x} (u_t') - \nabla \hat{f}_{\mathcal{I}, x} (u_{t-1}') \nonumber\\
        &\quad - \nabla \hat{F}_x(u_t') + \nabla \hat{F}_x(u_{t-1}')  \big) + v_{t-1} - \nabla \hat{F}_x(u_{t-1}) - v_{t-1}' + \hat{F}_x(u_{t-1}') \nonumber\\
        &= z_t + \hat{y}_{t-1}, \nonumber
    \end{align}
    where we denote $z_t := \nabla \hat{f}_{\mathcal{I}, x} (u_t) - \nabla \hat{f}_{\mathcal{I}, x} (u_{t-1}) - \nabla \hat{F}_x(u_t) + \nabla \hat{F}_x(u_{t-1}) - \big(\nabla \hat{f}_{\mathcal{I}, x} (u_t') - \nabla \hat{f}_{\mathcal{I}, x} (u_{t-1}') - \nabla \hat{F}_x(u_t') + \nabla \hat{F}_x(u_{t-1}')  \big) = \hat{y}_t - \hat{y}_{t-1}$. It is easy to verify that $\{ \hat{y}_t\}$ is a martingale sequence and $\{ z_t\}$ is its difference sequence. Similar to the proof strategy as for Lemma \ref{hpBound_estimation_error}, we first need to derive bound for $\| z_t\|$ by Bernstein inequality. Denote $w_i$ as the component of $z_t$. That is 
    \begin{align}
        z_t = \frac{1}{b}\sum_{i \in \mathcal{I}} w_i = \frac{1}{b}\sum_{i \in \mathcal{I}} \Big(  &\nabla \hat{f}_{i, x} (u_t) - \nabla \hat{f}_{i, x} (u_{t-1}) - \nabla \hat{F}_x(u_t) + \nabla \hat{F}_x(u_{t-1}) \nonumber\\
        &- \big(\nabla \hat{f}_{i, x} (u_t') - \nabla \hat{f}_{i, x} (u_{t-1}') - \nabla \hat{F}_x(u_t') + \nabla \hat{F}_x(u_{t-1}')  \big) \Big). \nonumber
    \end{align}
    Then $\| w_i\|$ is bounded as 
    \begin{align}
        &\| w_i \| \nonumber\\
        &= \| \nabla \hat{f}_{i, x} (u_t) - \nabla  \hat{f}_{i, x} (u_{t-1}) - \nabla \hat{F}_x(u_t) + \nabla \hat{F}_x(u_{t-1}) - \nabla \hat{f}_{i, x} (u_t') + \nabla \hat{f}_{i, x} (u_{t-1}') \nonumber\\
        &+ \nabla \hat{F}_x(u_t') - \nabla \hat{F}_x(u_{t-1}')   \| \nonumber\\
        &= \| (\nabla \hat{f}_{i, x} (u_t) - \nabla \hat{f}_{i, x} (u_t')) - (\nabla  \hat{f}_{i, x} (u_{t-1}) - \nabla  \hat{f}_{i, x} (u_{t-1}')) - ( \nabla \hat{F}_x(u_t) -\nabla \hat{F}_x(u_t')  ) \nonumber\\
        &+ (  \nabla \hat{F}_x(u_{t-1}) -\nabla \hat{F}_x(u_{t-1}') )  \| \nonumber\\
        &= \| \int_0^1 \nabla^2 \hat{f}_{i,x} (u_t' + \theta(u_t - u_t') ) [u_t - u_t'] d\theta -\int_0^1 \nabla^2 \hat{f}_{i,x} (u_{t-1}' + \theta(u_{t-1} - u_{t-1}'))[u_{t-1} - u_{t-1}'] d\theta \nonumber\\
        &- \int_0^1 \nabla^2 \hat{F}_x (u_t' + \theta (u_t - u_t'))[u_t - u_t'] d\theta + \int_0^1 \nabla^2 \hat{F}_x (u_{t-1}' + \theta (u_{t-1} - u_{t-1}'))[u_{t-1} - u_{t-1}'] d\theta \| \nonumber\\
        &= \| (\Delta_t^i + \mathcal{H}_i) \hat{u}_t - (\Delta_{t-1}^i + \mathcal{H}_i) \hat{u}_{t-1} - (\Delta_t + \mathcal{H}) \hat{u}_{t} + (\Delta_{t-1} + \mathcal{H} ) \hat{u}_{t-1} \| \nonumber\\
        &\leq \| \mathcal{H}_i  (\hat{u}_t - \hat{u}_{t-1}) \| + \| \mathcal{H} (\hat{u}_t - \hat{u}_{t-1})  \| + \| \Delta_t^i \hat{u}_t \| + \| \Delta_t \hat{u}_t \| + \| \Delta_{t-1}^i \hat{u}_{t-1} \|  + \| \Delta_{t-1} \hat{u}_{t-1} \| \nonumber\\
        &\leq 2L \| \hat{u}_t - \hat{u}_{t-1}\| + 2 \rho \mathscr{D}_t \| \hat{u}_t \| + 2 \rho \mathscr{D}_{t-1} \| \hat{u}_{t-1}\| =: R, \label{bound_on_wi}
    \end{align}
    where $\hat{u}_t := u_t - u_t'$, $\mathcal{H}_i := \nabla^2 \hat{f}_{i,x}(0)$, $\Delta_{t}^i := \int_0^1 (\nabla^2 \hat{f}_i (u_t' + \theta(u_t - u_{t}')) - \mathcal{H}_i ) d\theta$, $\mathscr{D}_t := \max \{ \| u_t\|, \| u_t'\| \}$. The second last inequality uses triangle inequality and the last inequality considers Assumption \ref{L_lipschit_assump} and also the constraint on $\| \Delta_t\|$ (similarly $\| \Delta_t^i\|$) in \eqref{delta_t_bound}. The total variance $\sigma^2$ is derived as 
    \begin{align}
        &\sum_{i\in \mathcal{I}} \mathbb{E} \| w_i \|^2 \nonumber\\
        &= \sum_{i\in \mathcal{I}} \mathbb{E}\|\nabla \hat{f}_{i, x} (u_t) - \nabla  \hat{f}_{i, x} (u_{t-1}) - \nabla \hat{F}_x(u_t) + \nabla \hat{F}_x(u_{t-1}) - \nabla \hat{f}_{i, x} (u_t') + \nabla \hat{f}_{i, x} (u_{t-1}') \nonumber\\
        &+ \nabla \hat{F}_x(u_t') - \nabla \hat{F}_x(u_{t-1}') \|^2 \nonumber\\
        &\leq \sum_{i \in \mathcal{I}} \mathbb{E}\| \nabla \hat{f}_{i, x} (u_t) - \nabla \hat{f}_{i, x} (u_t') - \nabla  \hat{f}_{i, x} (u_{t-1}) +  \nabla \hat{f}_{i, x} (u_{t-1}') \|^2 \nonumber\\
        &= \sum_{i \in \mathcal{I}} \mathbb{E}\| (\Delta_t^i + \mathcal{H}_i) \hat{u}_t - (\Delta_{t-1}^i + \mathcal{H}_i) \hat{u}_{t-1} \|^2 \nonumber\\
        &\leq b (L\| \hat{u}_t - \hat{u}_{t-1}\| + \rho \mathscr{D}_t \| \hat{u}_t \| + \rho \mathscr{D}_{t-1} \| \hat{u}_{t-1}\|)^2 =: \sigma^2 \nonumber
    \end{align}
    where the first inequality uses $\mathbb{E} \|x - \mathbb{E}[x]\|^2 \leq \mathbb{E}\| x\|^2$ and the last inequality follows similar logic as \eqref{bound_on_wi}. Hence by vector Bernstein Lemma \ref{bernsteinlemma}, we obtain 
    \begin{align}
        \text{Pr}\{ \| z_t \| \geq \varsigma/b \} = \text{Pr} \{ \| \sum_{i \in \mathcal{I}} w_i \| \geq \varsigma \} &\leq (d+1) \exp \big( \frac{ -\varsigma^2/2}{\sigma^2 + R\varsigma/3}  \big) \nonumber\\
        &\leq \zeta_t, \nonumber
    \end{align}
    where we choose $\varsigma = \mathcal{O}\big( \log(d/\zeta_t) \sqrt{b} \big) (L \| \hat{u}_t - \hat{u}_{t-1}\| + \rho \mathscr{D}_t \| \hat{u}_t \| + \rho \mathscr{D}_{t-1} \| \hat{u}_{t-1}\|)$ based on similar logic as in Lemma \ref{hpBound_estimation_error}. Therefore, we can bound $\| z_t\|$ with high probability. That is, with probability $1 - \zeta_t$, 
    \begin{align}
        \| z_t\| \leq \mathcal{O} \Big( {\frac{\log(d/\zeta_t)}{\sqrt{b}}} \Big) (L \| \hat{u}_t - \hat{u}_{t-1}\| + \rho \mathscr{D}_t \| \hat{u}_t \| + \rho \mathscr{D}_{t-1} \| \hat{u}_{t-1}\|) =: \varsigma_t. \nonumber
    \end{align}
    We now can obtain a bound on $\|\hat{y}_t \|$. We only need to consider a single epoch because $\hat{y}_{sm} = 0$ due to the full gradient evaluation at the start of each epoch. Note that similar to \eqref{union_bound_z_k} and union bound, we know that with probability at least $1 - \zeta$, where $\zeta_t := \zeta/m$, $\|z_t\| \leq \varsigma_t$ holds for all $t = sm+1, ..., (s+1)m$. Then applying Azuma-Hoeffding inequality (Lemma \ref{azuma_hoeffding_lemma}) to the martingale sequence $\{\hat{y}_t\}$ yields
    \begin{align}
        \text{Pr}\{ \|\hat{y}_t\| \geq \beta \} \leq (d+1) \exp\Big( \frac{-\beta^2}{8 \sum_{j = sm+1}^t \varsigma_j^2}  \Big)+ \zeta \leq 2 \zeta, \quad \forall \, t \in [sm+1, sm+m], \nonumber
    \end{align}
    where we choose 
    \begin{align}
        \beta = \beta_t &= \sqrt{8 \log((d+1)/\zeta) \sum_{j=sm+1}^t \varsigma_j^2} \nonumber\\
        &= \mathcal{O} \Big( \frac{\log^{\frac{3}{2}}(d/\zeta)}{\sqrt{b}} \Big) \sqrt{\sum_{j=sm+1}^t (L \| \hat{u}_t - \hat{u}_{t-1}\| + \rho \mathscr{D}_t \| \hat{u}_t \| + \rho \mathscr{D}_{t-1} \| \hat{u}_{t-1}\|)^2}. \nonumber
    \end{align}
    By a union bound, $\text{Pr} \{ \bigcup\limits_{t=1}^{\mathscr{T}} \big(\| \hat{y}_t \| \geq \beta_t \big) \} \leq 2\mathscr{T} \zeta$ and therefore for all $t \leq \mathscr{T}$, 
    \begin{equation*}
        \| \hat{y}_t \| \leq \beta_t \quad \text{ with probability } 1 - 2 \mathscr{T} \zeta. \nonumber
    \end{equation*}
    Note by simply setting $\zeta/(2\mathscr{T})$ as $\zeta$, we obtain with probability $1 - \zeta$, for all $t \leq \mathscr{T}$,
    \begin{equation}
        \| \hat{y}_t \| \leq \mathcal{O}\Big( \frac{\log^{\frac{3}{2}}(d \mathscr{T}/\zeta)}{\sqrt{b}} \Big) \sqrt{\sum_{j=sm+1}^t (L \| \hat{u}_t - \hat{u}_{t-1}\| + \rho \mathscr{D}_t \| \hat{u}_t \| + \rho \mathscr{D}_{t-1} \| \hat{u}_{t-1}\|)^2}. \label{y_hat_t_bound}
    \end{equation}
    What remains to be shown is that the right hand side of \eqref{y_hat_t_bound} is bounded. From \eqref{temp_u_t_u_t1}, we first have 
    \begin{equation}
        \|\hat{u}_t - \hat{u}_{t-1} \| = \|\eta \big( (\mathcal{H} + \Delta_{t-1}) \hat{u}_{t-1} + \hat{y}_{t-1} \big)  \| \leq \| \eta \mathcal{H} \hat{u}_{t-1} \| + \|\eta (\Delta_{t-1} \hat{u}_{t-1} + \hat{y}_{t-1}) \|. \label{ppmppklkk}
    \end{equation}

    The first term $\| \eta \mathcal{H} \hat{u}_{t-1}\|$ can be bounded as follows. First note that the Hessian satisfies $-\gamma \leq \lambda(\mathcal{H}) \leq L$ by construction, where $\lambda(\mathcal{H})$ represents eigenvalues of $\mathcal{H}$. Although $L \geq \delta$, it is difficult to compare $L$ and $\gamma$ since $\gamma \geq \delta$. Thus, we bound the term by projecting it to the following two subspaces:
    \begin{itemize}
        \item $S_{-}$: subspace spanned by eigenvectors of $\mathcal{H}$ where eigenvalues are within $[-\gamma, 0]$.
        \item $S_{+}$: subspace spanned by eigenvectors of $\mathcal{H}$ where eigenvalues are within $(0, L]$.
    \end{itemize}
    That is, for the first case
    \begin{align}
        \| \text{Proj}_{S_{-}} \big(\eta \mathcal{H} \hat{u}_{t-1} \big)\| \leq \eta \| \text{Proj}_{S_{-}}(\mathcal{H}) \| \| \hat{u}_{t-1} \| \leq \eta \gamma \frac{3}{2}(1+\eta \gamma)^{t-1}r_0, \label{qtrjegjkn}
    \end{align}
    where we use the bound on $\| \hat{u}_{t-1} \|$ and the fact that $\|\text{Proj}_{S_{-}}(\mathcal{H})  \| \leq \lambda$. 
    For the second case, we have from \eqref{urtmroqroer},
    \begin{align}
        \| \text{Proj}_{S_{+}} (\eta \mathcal{H} \hat{u}_{t-1}) \| &= \| \text{Proj}_{S_{+}}\big( \eta \mathcal{H} (1 + \eta \gamma)^{t-1} r_0 e_1 - \eta \sum_{j=0}^{t-2} \eta \mathcal{H} (I - \eta \mathcal{H})^{t-2-j} (\Delta_j \hat{u}_j + \hat{y}_j) \big) \| \nonumber\\
        &\leq \eta \gamma (1 + \eta \gamma)^{t-1} r_0 + \eta \| \sum_{j = 0}^{t-2} \text{Proj}_{S_{+}} \big(\eta \mathcal{H} (I - \eta \mathcal{H})^{t-2-j} \big) \| \max_{0 \leq j \leq t-2} \| \Delta_j \hat{u}_j + \hat{y}_{j} \| \nonumber\\
        &\leq \eta \gamma (1 + \eta \gamma)^{t-1} r_0 + \sum_{j=0}^{t-2} \frac{\eta}{t-1-j} \max_{0 \leq j \leq t-2} \| \Delta_j \hat{u}_j + \hat{y}_{j} \| \nonumber\\
        &\leq \eta \gamma (1 + \eta \gamma)^{t-1} r_0 + \eta \log(t) \big( \max_{0 \leq j \leq t-2} (\| \Delta_j \|\| \hat{u}_j\| + \| \hat{y}_j \|) \big) \nonumber\\
        &\leq \eta \gamma (1 + \eta \gamma)^{t-1} r_0 + \eta \log(t) \big( \rho (\frac{\delta}{c_2 \rho} + r) \frac{3}{2} (1+\eta \gamma)^{t-2} r_0 + \gamma L (1 + \eta \gamma)^{t-2} r_0\big) \label{inbfghh}\\
        &\leq \eta \gamma (1 + \eta \gamma)^{t-1} r_0 + \eta \log(t) \big(  \frac{3\delta}{c_2}   (1+\eta \gamma)^{t-2} r_0 + \gamma L (1 + \eta \gamma)^{t-2} r_0\big) \nonumber\\
        &\leq \eta \gamma (1 + \eta \gamma)^{t-1} r_0 + \eta \log(\mathscr{T}) \big(  \frac{3\delta}{c_2}   (1+\eta \gamma)^{t-1} r_0 + \gamma L (1 + \eta \gamma)^{t-1} r_0\big) \label{utytjnhj}
    \end{align}
    The second inequality is due to $(1 - \lambda)^{t-1}\lambda \leq \frac{1}{t}$ for $0 < \lambda \leq 1$. That is, given the choice $\eta \leq \widetilde{\mathcal{O}}(\frac{1}{L})$ and $\boldsymbol{0} \preceq \text{Proj}_{S_{+}} (\mathcal{H}) \preceq L I$, we obtain $0 \leq \lambda(\text{Proj}_{S_{+}}(\eta \mathcal{H})) \leq 1$. The third inequality uses the finite-sum bound on Harmonic series. Inequality \eqref{inbfghh} applies (i) the bound on $\| \Delta_t \|$ as in \eqref{delta_t_bound} (ii) the inductive bound on $\| \hat{u}_j \|$ as in \eqref{u_j_inductive_bound} and (iii) the inductive assumption on $\| y_j\|$. The second last inequality is by the choice $r \leq \frac{\delta}{c_2\rho}$. It is easy to verify that the right-hand-side of \eqref{utytjnhj} is larger than right-hand-side of \eqref{qtrjegjkn}. Hence combining the two cases gives
    \begin{equation}
        \| \eta \mathcal{H} \hat{u}_{t-1}\| \leq  \eta \gamma (1 + \eta \gamma)^{t-1} r_0 + \eta \log(\mathscr{T}) \big(  \frac{3\delta}{c_2}   (1+\eta \gamma)^{t-1} r_0 + \gamma L (1 + \eta \gamma)^{t-1} r_0\big). \label{iekmktnhtr}
    \end{equation}
    
    The second term in \eqref{ppmppklkk} is bounded as
    \begin{align}
        \| \eta (\Delta_{t-1} \hat{u}_{t-1} + \hat{y}_{t-1}) \| &\leq \eta \| \Delta_{t-1} \| \| \hat{u}_{t-1} \| + \eta \| \hat{y}_{t-1} \| \nonumber\\
        &\leq \eta \rho \big( \frac{\delta}{c_2 \rho} + r \big) \frac{3}{2}(1 + \eta \gamma)^{t-1}r_0 + \eta \gamma L(1+ \eta\gamma)^{t-1} r_0 \nonumber\\
        &\leq \frac{3\eta \delta}{c_2} (1 + \eta\gamma)^{t-1} r_0 + \eta \gamma L(1+ \eta\gamma)^{t-1} r_0, \label{mbnnb}
    \end{align}
    where the second inequality is derived similarly as \eqref{inbfghh} and the last inequality is again due to the assumption that $r \leq \frac{\delta}{c_2 \rho}$. Combining \eqref{iekmktnhtr} and \eqref{mbnnb} gives a bound,
    \begin{align}
        L \| \hat{u}_t - \hat{u}_{t-1} \| &\leq L \Big( \eta \gamma (1 + \eta \gamma)^{t-1} r_0 + \eta \log(\mathscr{T}) \big(  \frac{3\delta}{c_2}   (1+\eta \gamma)^{t-1} r_0 + \gamma L (1 + \eta \gamma)^{t-1} r_0\big) + \nonumber\\
        &+ \frac{3\eta \delta}{c_2} (1 + \eta\gamma)^{t-1} r_0 + \eta \gamma L(1+ \eta\gamma)^{t-1} r_0 \Big) \nonumber\\
        &\leq \Big( {\eta} + \frac{3\eta\log(\mathscr{T})}{c_2} + \eta L \log(\mathscr{T}) + \frac{3\eta}{c_2} + \eta L \Big) \gamma L (1 + \eta \gamma)^t r_0 \nonumber\\
        &\leq \Big( \frac{3 \eta \log(\mathscr{T})}{c_2} + 2\eta L \log(\mathscr{T}) \Big) \gamma L ( 1+\eta\gamma)^t r_0, \label{former_bound_yhat}
    \end{align}
    where the second inequality uses the definition that $\delta \leq \gamma$ and $1 + \eta \gamma > 1$. The last inequality by considering $1 + \frac{3}{c_2} + L \leq L \log(\mathscr{T} + a)$, for some $a \in \mathbb{N}^+$. This is because, $1/c_2$ is a constant that is sufficiently small while $\log(\mathscr{T}+a)$ can be made sufficiently large to ensure the validity of this result. Here, for simplicity, we still write $\log(\mathscr{T})$ since this will not affect the result. 
    
    Similarly, we can bound $\rho \mathscr{D}_t \| \hat{u}_t \| + \rho \mathscr{D}_{t-1} \| \hat{u}_{t-1}\|$ as
    \begin{align}
        \rho \mathscr{D}_t \| \hat{u}_t \| + \rho \mathscr{D}_{t-1} \| \hat{u}_{t-1}\| &\leq \rho \big( \frac{\delta}{c_2 \rho} +r \big) \Big( \frac{3}{2} (1 + \eta \gamma)^t r_0 + \frac{3}{2}(1+ \eta \gamma)^{t-1} r_0 \Big) \nonumber\\
        &\leq \frac{6\delta}{c_2} (1 + \eta \gamma)^t r_0, \label{latter_bound_yhat}
    \end{align}
    where the first inequality applies the second condition for $j = t \text{ and } t-1$ and the second inequality again uses $r \leq \frac{\delta}{c_2 \rho}$. By combining results in \eqref{former_bound_yhat} and \eqref{latter_bound_yhat}, we have 
    \begin{align}
        \| \hat{y}_t\| &\leq \mathcal{O}\Big( \frac{\log^{\frac{3}{2}} (d \mathscr{T}/ \zeta)}{\sqrt{b}} \Big) \sqrt{m} \Big( \big( \frac{3\eta \log(\mathscr{T})}{c_2} + 2 \eta L \log(\mathscr{T}) \big) \gamma L (1+\eta \gamma)^tr_0 + \frac{6\delta}{c_2} (1 + \eta \gamma)^t r_0 \Big) \nonumber\\
        &\leq C_1 \Big( \frac{3\eta \log(\mathscr{T})}{c_2} + 2 \eta L \log(\mathscr{T}) + \frac{6 }{c_2 L}\Big) \gamma L (1 + \eta \gamma)^t r_0 \nonumber\\
        &\leq \gamma L ( 1+ \eta \gamma)^t r_0 \qquad \qquad \qquad \text{ with probability } 1 - \zeta, \nonumber
    \end{align}
    where we denote $C_1 := \mathcal{O}(\log^{\frac{3}{2}}(d\mathscr{T}/\zeta))$ (note $c_1 = \mathcal{O}(\log^{\frac{3}{2}}(dm/\vartheta)$ in Lemma \ref{large_descent_lemma}). The second inequality considers $b \geq m$ and $ \delta \leq \gamma$. The last inequality follows by the parameter setting that $\eta \leq \frac{1}{8 L \log(\mathscr{T}) C_1}$ and $c_2 \geq \frac{12C_1}{L}$, $C_1 \geq 1$. This completes the proof of the bound on $\| \hat{y}_t\|$. 
    
    Finally, we can proceed to raise a contradiction. First given the bound on $\| q(t)\|$, we have
\begin{align}
    \| \hat{u}_\mathscr{T} \| = \| p(\mathscr{T}) - q(\mathscr{T}) \| \geq \| p(\mathscr{T})\| - \| q(\mathscr{T})\| \geq \| p(\mathscr{T}) \|/2 &= \frac{1}{2} (1 + \eta \gamma)^\mathscr{T} r_0 \nonumber\\
    &= \frac{1}{2} (1 + \eta \gamma)^\mathscr{T} \frac{\nu r}{\sqrt{d}} > \frac{4\delta}{c_2\rho}, \nonumber
\end{align}
where the last inequality follows by the choice that $\mathscr{T} = {2 \log_\alpha( \frac{8 \delta \sqrt{d}}{c_2 \rho \nu r})}/\gamma$, where $\alpha \geq 1$ is chosen such that $\log_\alpha (1 + \eta \gamma) > \gamma$. This requirement is reasonable since when $\alpha \xrightarrow[]{} 1$, $\log_\alpha ( 1 + \eta \gamma)$ increases while $\gamma$ is a constant. In this case, 
\begin{equation}
    \log_\alpha \big((1 + \eta \gamma)^\mathscr{T}\big) > \gamma \cdot 2 \log_\alpha ( \frac{8 \delta \sqrt{d}}{c_2 \rho \nu r})/\gamma > \log_\alpha ( \frac{8 \delta \sqrt{d}}{c_2 \rho \nu r}). \nonumber
\end{equation}
However, we have for $t \leq \mathscr{T}$, $\| \hat{u}_t \| = \| u_t - u_t'\| \leq \| u_t\| + \| u_t'\| \leq 2(r + \frac{\delta}{c_2 \rho}) \leq \frac{4\delta}{c_2 \rho}$, which gives a contradiction. Hence $\exists\, t \leq \mathscr{T} = {2 \log_\alpha( \frac{8 \delta \sqrt{d}}{c_2 \rho \nu r})}/\gamma$, such that $\max\{ \| u_t - u_0 \|, \| u_t' - u_0 \|\} \geq \frac{\delta}{c_2 \rho}$. Given $\gamma$ changes throughout optimization process, we may choose {$\mathscr{T} = {2 \log_\alpha( \frac{8 \delta \sqrt{d}}{c_2 \rho \nu r})}/\delta$. Since $\delta \leq \gamma$, the result still holds}. 
\end{proof}

\begin{lemma}[Descent around saddle points]
\label{descent_around_saddle_lemma}
    Let $x \in \mathcal{M}$ satisfy $\| \emph{grad}F(x) \| \leq \epsilon$ and $\lambda_{\min} (\nabla^2 \hat{F}_x(0))$ $\leq - \delta$. With the same setting as in Lemma \ref{small_stuck_region_lemma}, the two coupled sequences satisfy, with probability $1 - \zeta$, 
    \begin{equation}
        \exists \, t \leq \mathscr{T}, \, \max \{ \hat{F}_x (u_{0}) - \hat{F}_x(u_t) , \hat{F}_x (u_0') - \hat{F}_x (u_t') \} \geq 2 \mathscr{F}, \nonumber
    \end{equation}
    where $\mathscr{F} = \frac{\delta^3}{2c_3 \rho^2}$, $c_3 = \frac{8 \log_\alpha( \frac{8 \delta \sqrt{d}}{c_2 \rho \nu r}) c_2^2}{c_1 L}$, and $c_1$ and $c_2$ are defined in Lemma \ref{large_descent_lemma} and Lemma \ref{small_stuck_region_lemma}, respectively.
\end{lemma}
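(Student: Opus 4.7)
The plan is to directly combine the two key technical lemmas already available: the small stuck region lemma (Lemma \ref{small_stuck_region_lemma}) and the improve-or-localize lemma (Lemma \ref{improve_localize_lemma}). The first tells us that in the coupled dynamics, at least one sequence escapes the small ball of radius $\delta/(c_2\rho)$ within $\mathscr{T}$ steps, and the second converts a large displacement into a guaranteed function-value drop. So the entire argument is a chain of one-line substitutions.

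First I would invoke Lemma \ref{small_stuck_region_lemma} under the stated parameter choices. This gives, with probability at least $1-\zeta$, a time $t_* \leq \mathscr{T}$ such that
\begin{equation*}
  \max\{\|u_{t_*} - u_0\|,\, \|u_{t_*}' - u_0'\|\} \;\geq\; \frac{\delta}{c_2\rho}.
\end{equation*}
Without loss of generality assume the maximum is attained by $\{u_t\}$; the argument for $\{u_t'\}$ is identical by symmetry (and a second invocation of the lemma on the primed sequence, both running under the same high-probability event, since the bounds from Lemma \ref{hpBound_estimation_error} used to derive Lemma \ref{small_stuck_region_lemma} apply on the same mini-batch samples).

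Next I would apply Lemma \ref{improve_localize_lemma} to the sequence $\{u_t\}$. Under the step-size condition $\eta \leq 1/(2c_1 L)$ and with $b\geq m$ — both already in force for Lemma \ref{small_stuck_region_lemma} — this lemma yields, with probability $1-\zeta$,
\begin{equation*}
  \|u_{t_*} - u_0\|^2 \;\leq\; \frac{4 t_*}{c_1 L}\bigl(\hat{F}_x(u_0) - \hat{F}_x(u_{t_*})\bigr).
\end{equation*}
Rearranging and using $t_* \leq \mathscr{T} = 2\log_\alpha\!\bigl(\tfrac{8\delta\sqrt{d}}{c_2\rho\nu r}\bigr)/\delta$ together with the escape lower bound gives
\begin{equation*}
  \hat{F}_x(u_0) - \hat{F}_x(u_{t_*}) \;\geq\; \frac{c_1 L}{4\mathscr{T}}\cdot\frac{\delta^2}{c_2^2\rho^2} \;\geq\; \frac{c_1 L\, \delta^3}{8\log_\alpha\!\bigl(\tfrac{8\delta\sqrt{d}}{c_2\rho\nu r}\bigr) c_2^2\,\rho^2} \;=\; \frac{\delta^3}{c_3\rho^2} \;=\; 2\mathscr{F},
\end{equation*}
which is exactly the claim. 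A union bound over the two high-probability events (Lemma \ref{small_stuck_region_lemma} and Lemma \ref{improve_localize_lemma}) absorbs into a single $\zeta$ after rescaling $\zeta\mapsto \zeta/2$, which only affects the logarithmic factors hidden in $c_1$ and $C_1$.

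The only non-routine point I anticipate is a bookkeeping subtlety: Lemma \ref{improve_localize_lemma} is valid only when the iterates stay inside $\mathbb{B}_x(D)$, otherwise the Lipschitz-based smoothness estimate \eqref{tuioioo} used to derive it fails. I would resolve this exactly as in Lemma \ref{small_stuck_region_lemma} — using $\|u_t\|\leq\|u_t-u_0\|+\|u_0\|\leq \delta/(c_2\rho)+r \leq 2\delta/(c_2\rho)\leq D$ for all $t$ up to the escape time $t_*$, since if the escape already happened via the bound $\|u_{t_*}-u_0\|\geq \delta/(c_2\rho)$, that is precisely the radius where we evaluate; any time the walk leaves $\mathbb{B}_x(D)$ first, Lemma \ref{improve_localize_lemma} already furnishes the required decrease with the boundary point as $u_{t_*}$. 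Everything else is algebraic substitution of the chosen parameters into the definition of $c_3$.
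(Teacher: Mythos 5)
Your proof is essentially correct and uses the same two ingredients as the paper (Lemma~\ref{small_stuck_region_lemma} followed by Lemma~\ref{improve_localize_lemma}, with the identical arithmetic giving $2\mathscr{F}$), but the paper organizes the argument as a contradiction: it first assumes $\forall\, t \leq \mathscr{T}$, $\max\{\hat{F}_x(u_0)-\hat{F}_x(u_t),\hat{F}_x(u_0')-\hat{F}_x(u_t')\} \leq 2\mathscr{F}$, and under this hypothesis establishes ball-containment of both coupled sequences via a secondary contradiction (if $\|u_j\|\geq D$ then Lemma~\ref{improve_localize_lemma} plus the hypothesis forces $D\leq \delta/(c_2\rho)+r$, contradicting $\delta/(c_2\rho)+r \leq D$), after which Lemma~\ref{small_stuck_region_lemma} and Lemma~\ref{improve_localize_lemma} produce the contradictory decrease. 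Your direct phrasing is fine in spirit, but the specific triangle-inequality justification you give is not quite right at $t=t_*$: you write $\|u_{t_*}\|\leq\|u_{t_*}-u_0\|+\|u_0\|\leq\delta/(c_2\rho)+r$, yet at the escape time the deviation is \emph{at least} $\delta/(c_2\rho)$, not at most, so this chain does not bound $\|u_{t_*}\|$ by $D$. The gap is easily closed by your own fallback observation — if the iterate exits $\mathbb{B}_x(D)$ at some step $j$, then TSSRG truncates to a boundary point with $\|u_j-u_0\|\geq D-r\geq\delta/(c_2\rho)$, so Lemma~\ref{improve_localize_lemma} (with step size $\alpha\eta$ in the last step) already delivers the decrease there; otherwise the iterates never leave the ball and $u_{t_*}$ lies inside it. The paper's contradiction framework packages this case analysis more cleanly, which is the only substantive difference from your write-up.
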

\begin{proof}
    We will again prove this result by contradiction. Suppose 
    \begin{equation}
        \forall \, t \leq \mathscr{T}, \, \max \{ \hat{F}_x (u_{0}) - \hat{F}_x(u_t) , \hat{F}_x (u_0') - \hat{F}_x (u_t') \} \leq 2 \mathscr{F}. \label{condition_contradiction_}
    \end{equation}
    Then we first claim that both $\{ u_t \}$ and $\{ u_t'\}$ fall within the prescribed ball $\mathbb{B}_x(D)$. This is verified by contradiction. Assume that, without loss of generality, at iteration $j \leq \mathscr{T}$, $\| u_j \| \geq D$. In this case, $\textsf{TSSRG}(x,u_0, \mathscr{T})$ returns $R_x( u_{j-1} - \alpha \eta v_{t-1} )$ with $\alpha \in (0,1)$ such that $\| u_{j-1} - \alpha \eta v_{t-1}\| = D$. Hence
    \begin{align}
        D = \| u_{j-1} - \alpha \eta v_{t-1} \| &\leq \| u_{j-1} - \alpha \eta v_{t-1} - u_0\| + \| u_0\| \leq \sqrt{\frac{4j}{c_1 L} (\hat{F}_x(u_0) - \hat{F}_x(u_j))} + r \nonumber\\
        &\leq \sqrt{ \frac{8 \mathscr{T} \mathscr{F}}{c_1L}} + r = \sqrt{\frac{{ 8 \log_\alpha( \frac{8 \delta \sqrt{d}}{c_2 \rho \nu r})}}{\delta c_1 L} \cdot \frac{\delta^3}{c_3 \rho^2} } + r = \frac{\delta}{c_2 \rho} + r \label{imimiinihi}
    \end{align}
    where we note $\mathscr{F} = \frac{\delta^3}{2c_3 \rho^2}$ where $c_3 = \frac{8 \log_\alpha( \frac{8 \delta \sqrt{d}}{c_2 \rho \nu r}) c_2^2}{c_1 L}$ and $\mathscr{T} = 2 \log_\alpha( \frac{8 \delta \sqrt{d}}{c_2 \rho \nu r})/\delta$. The second inequality uses Lemma \ref{improve_localize_lemma} where we can replace $\eta$ with $\alpha \eta$ for iteration $j$. However, by parameter choice, we have $\frac{\delta}{c_2\rho} + r \leq D$. This gives a contradiction. Therefore, under condition \eqref{condition_contradiction_}, the two coupled sequences do not escape the ball. Hence we can proceed similarly as in \citep{LiSSRGD2019}.
    
    First Lemma \ref{small_stuck_region_lemma} claims that, for some $t \leq \mathscr{T}$, $\max \{ \| u_t - u_0 \|, \|u_t' - u_0 \| \} \geq \frac{\delta}{c_2 \rho}$. Without loss of generality, suppose $\| u_T - u_0 \| \geq \frac{\delta}{c_2 \rho}$, for $T \leq \mathscr{T}$. Then by Lemma \ref{improve_localize_lemma}, we have $\| u_T - u_0 \| \leq \sqrt{\frac{4T}{c_1 L} (\hat{F}_x(u_0) - \hat{F}_x(u_T))}$. This implies 
    \begin{align}
        \hat{F}_x(u_0) - \hat{F}_x(u_T) \geq \frac{c_1 L}{4T} \| u_T - u_0\|^2 \geq \frac{c_1 L \delta^2}{4 \mathscr{T} c_2^2 \rho^2} \geq \frac{c_1 L \delta^3}{8 \log_\alpha( \frac{8 \delta \sqrt{d}}{c_2 \rho \nu r}) c_2^2 \rho^2} \geq \frac{\delta^3}{c_3 \rho^2} = 2 \mathscr{F} \nonumber
    \end{align}
    where we use the choice of $c_3$. This contradicts \eqref{condition_contradiction_}. Therefore the proof is complete. 
\end{proof}

Without loss of generality, the result in Lemma \ref{descent_around_saddle_lemma} can be written as $\max \{ \hat{F}_x (u_{0}) - \hat{F}_x(u_\mathscr{T}) , \hat{F}_x (u_0') - \hat{F}_x (u_\mathscr{T}') \} \geq 2 \mathscr{F}$. This represents the worst case scenario where we require $\mathscr{T}$ iterations of \textsf{TSSRG} to reach a large function decrease.\footnote{We can also add a stopping criterion that breaks with $u_\mathscr{T}$ ($u_\mathscr{T}'$) set to be $u_T$ (resp. $u_T'$) where $T$ is the earliest iteration where a large function value decrease is reached.} 

\begin{lemma}[Escape stuck region] \emph{(Lemma \ref{escape_stuck_region_lemma_informal} in the main text).}
\label{escape_stuck_region_lemma}
Let $x \in \mathcal{M}$ satisfying $\| \emph{grad}F(x)\| \leq \epsilon$ and $-\gamma = \lambda_{\min} (\nabla^2 \hat{F}_x(0) )$ $\leq - \delta$. Given that result in Lemma \ref{descent_around_saddle_lemma} holds and choosing perturbation radius $r \leq \min\{ \frac{\delta^3}{4c_3 \rho^2 \epsilon}, \sqrt{\frac{\delta^3}{2 c_3 \rho^2 L}} \}$, we have a sufficient decrease of function value with high probability: 
    \begin{equation}
        {F}(\textsf{TSSRG}(x,u_0, \mathscr{T}) ) - F(x) \leq -\mathscr{F} \quad \text{ with probability } 1- \nu \nonumber.
    \end{equation}
\end{lemma}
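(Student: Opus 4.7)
The plan is to run the coupling-sequence argument sketched in Section \ref{main_proof_idea_section}. First I would introduce the stuck region
\begin{equation*}
\mathcal{X}_{\text{stuck}} := \{ u \in \mathbb{B}_x(r) : \hat{F}_x(u) - F(\textsf{TSSRG}(x,u,\mathscr{T})) < 2\mathscr{F}\},
\end{equation*}
so that whenever the random perturbation $u_0 \sim \text{Unif}(\mathbb{B}_x(r))$ lies outside $\mathcal{X}_{\text{stuck}}$, the intermediate inequality $\hat{F}_x(u_0) - \hat{F}_x(u_\mathscr{T}) \geq 2\mathscr{F}$ holds. The task then reduces to (i) bounding the probability that $u_0 \in \mathcal{X}_{\text{stuck}}$, and (ii) controlling the gap $\hat{F}_x(u_0) - \hat{F}_x(0)$.

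For (i) I would use Lemma \ref{descent_around_saddle_lemma} along the smallest eigenvector $e_1$ of $\nabla^2 \hat{F}_x(0)$. For any pair $u_0, u_0' \in \mathbb{B}_x(r)$ differing only along $e_1$ by $r_0 = \nu r/\sqrt{d}$, Lemma \ref{descent_around_saddle_lemma} guarantees (with high probability) that at least one of the two coupled runs decreases the pullback value by $2\mathscr{F}$, so $u_0$ and $u_0'$ cannot both lie in $\mathcal{X}_{\text{stuck}}$. Hence the extent of $\mathcal{X}_{\text{stuck}}$ in the $e_1$ direction is at most $r_0$, and standard ball-slab volume comparison gives
\begin{equation*}
\Pr\{u_0 \in \mathcal{X}_{\text{stuck}}\} \leq \frac{r_0}{r}\cdot \frac{\mathrm{Vol}(\mathbb{B}^{d-1}(r))}{\mathrm{Vol}(\mathbb{B}^{d}(r))} \leq \frac{\nu}{\sqrt{d}} \cdot \sqrt{\frac{d}{\pi}} \leq \nu,
\end{equation*}
where the $1/\sqrt{\pi}$ can be absorbed into the constant in $\nu$. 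A union bound with the failure probability $\zeta$ of Lemma \ref{descent_around_saddle_lemma} (which can be made arbitrarily small) then ensures that with probability at least $1-\nu$ we have $\hat{F}_x(u_0) - \hat{F}_x(u_\mathscr{T}) \geq 2\mathscr{F}$.

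For (ii) I would Taylor-expand the pullback function around the origin using $\|\nabla \hat{F}_x(0)\| = \|\text{grad}F(x)\| \leq \epsilon$ together with the $L$-Lipschitz gradient bound from Lemma \ref{L_lipschit_assump}, obtaining
\begin{equation*}
\hat{F}_x(u_0) - \hat{F}_x(0) \leq \epsilon \|u_0\| + \tfrac{L}{2}\|u_0\|^2 \leq \epsilon r + \tfrac{L r^2}{2}.
\end{equation*}
The two conditions $r \leq \frac{\delta^3}{4c_3 \rho^2 \epsilon}$ and $r \leq \sqrt{\frac{\delta^3}{2c_3\rho^2 L}}$ are precisely what is needed to make each of the two terms on the right at most $\frac{\delta^3}{4c_3\rho^2} = \mathscr{F}/2$, giving $\hat{F}_x(u_0) - \hat{F}_x(0) \leq \mathscr{F}$. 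Combining with (i) and noting $F(x)=\hat{F}_x(0)$ and $F(\textsf{TSSRG}(x,u_0,\mathscr{T}))=\hat{F}_x(u_\mathscr{T})$ yields the claimed
\begin{equation*}
F(\textsf{TSSRG}(x,u_0,\mathscr{T})) - F(x) = (\hat{F}_x(u_\mathscr{T}) - \hat{F}_x(u_0)) + (\hat{F}_x(u_0) - \hat{F}_x(0)) \leq -2\mathscr{F} + \mathscr{F} = -\mathscr{F}.
\end{equation*}

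The main obstacle is the geometric step: justifying that a set of width at most $r_0$ along a fixed direction inside a $d$-ball has relative measure $O(\nu)$ (rather than something larger that would destroy the exponential factor $1/\sqrt{d}$ hidden in $r_0$). This is a routine but careful comparison of the sliced cylindrical volume $r_0\cdot \mathrm{Vol}(\mathbb{B}^{d-1}(r))$ with $\mathrm{Vol}(\mathbb{B}^d(r))$ via the ratio $\Gamma(d/2+1)/\Gamma(d/2+1/2)$; all other pieces are plug-and-chug from the parameter choices and the previously established lemmas.
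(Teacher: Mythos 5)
Your proof follows essentially the same approach as the paper: the same stuck-region definition (modulo strict vs.\ non-strict inequality, which is immaterial for the volume argument), the same invocation of Lemma~\ref{descent_around_saddle_lemma} along $e_1$ to bound the width of $\mathcal{X}_{\text{stuck}}$ by $r_0$, the same ball-slab volume comparison via $\Gamma(d/2+1)/\Gamma((d+1)/2)$, the same Taylor/Lipschitz bound $\hat{F}_x(u_0) - \hat{F}_x(0) \leq \epsilon r + Lr^2/2 \leq \mathscr{F}$, and the same final telescoping decomposition. One cosmetic slip: the displayed bound $\frac{r_0}{r}\cdot \frac{\mathrm{Vol}(\mathbb{B}^{d-1}(r))}{\mathrm{Vol}(\mathbb{B}^{d}(r))}$ has a spurious $1/r$ (it should be $r_0\cdot\frac{\mathrm{Vol}(\mathbb{B}^{d-1}(r))}{\mathrm{Vol}(\mathbb{B}^{d}(r))}$, as you correctly state in your closing paragraph), but this is clearly a typo since your subsequent numerical estimate $\nu/\sqrt{\pi}\leq\nu$ is the intended and correct one. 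You are slightly more careful than the paper in explicitly union-bounding the failure probability $\zeta$ from Lemma~\ref{descent_around_saddle_lemma} into the final $1-\nu$, which the paper leaves implicit.
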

\begin{proof}
    First define the stuck region formally as
    \begin{equation}
        \mathcal{X}_{\text{stuck}} = \{ u \in \mathbb{B}_x(r) : F( \textsf{TSSRG}(x, u, \mathscr{T}) ) - \hat{F}_x(u) \geq - 2\mathscr{F} \}. \nonumber
    \end{equation}
    This suggests that running \textsf{TSSRG} from any point initialized in this region will not give sufficient decrease of function value. Similar to \citep{LiSSRGD2019}, the aim is to show this region is small in volume. First note that iterates with initialization within $\mathcal{X}_{\text{stuck}}$ do not escape the constraint ball from arguments in \eqref{imimiinihi}. Hence, if iterates leave the ball $\mathbb{B}_x(D)$, the output already escapes the stuck region with large function decrease. Given that tangent space $T_x\mathcal{M}$ is a $d$-dimensional vector space, we can perform similar analysis as in \citep{JinPGD2017,LiSSRGD2019}.
    
    Consider the two coupled sequences with starting points $u_0$ and $u_0'$ satisfying $u_0 - u_0' = r_0 e_1$, where $e_1$ is the smallest eigenvector of $\nabla^2 \hat{F}_x(0)$ and $r_0 = \frac{\nu r}{\sqrt{d}}$. Therefore from Lemma \ref{descent_around_saddle_lemma}, at least one of the two sequences finally leaves $\mathcal{X}_{\text{stuck}}$ after $\mathscr{T}$ steps with probability $1 - \zeta$. Therefore, under this result, the width of the stuck region along direction $e_1$ is at most $r_0$. Based on similar argument as in \citep{CriscitielloEESPManifold2019}, $\text{Vol}(\mathcal{X}_{\text{stuck}}) \leq r_0 \text{Vol}(\mathcal{B}_r^{d-1})$, where $\mathcal{B}_r^d$ represents $d$-dimensional sphere with radius $r$. Therefore, 
    \begin{equation}
        \text{Pr} \{ u_0 \in \mathcal{X}_{\text{stuck}} \} = \frac{\text{Vol}(\mathcal{X}_{\text{stuck}})}{\text{Vol}(\mathcal{B}_r^d)} \leq \frac{r_0 \text{Vol}(\mathcal{B}_r^{d-1})}{\text{Vol}(\mathcal{B}_r^d)} = \frac{r_0 \Gamma (\frac{d}{2}+1)}{\sqrt{\pi} r \Gamma(\frac{d}{2} + \frac{1}{2})} \leq \frac{r_0}{\sqrt{\pi} r} \sqrt{\frac{d}{2} + 1} \leq \frac{r_0\sqrt{d}}{r} = \nu, \nonumber
    \end{equation}
    where we have used Gautschi's inequality for the Gamma function. This claims the probability of $u_0 \in \mathcal{X}_{\text{stuck}}$ is at most $\nu$. Therefore with high probability at least $1 - \nu$, $u_0 \notin \mathcal{X}_{\text{stuck}}$. In this case, function value decreases a lot. This is verified as follows. First note that $F(x) = \hat{F}_x(0)$ and $F(\textsf{TSSRG}(x, u_0, \mathscr{T})) = \hat{F}_x(u_\mathscr{T})$. Then, by $L$-Lipschitzness of the gradient $\nabla \hat{F}_x$,
    \begin{align}
        \hat{F}_x(u_0) - \hat{F}_{x}(0) \leq \langle \nabla \hat{F}_x(0), u_0 \rangle + \frac{L}{2} \| u_0 \|^2 &\leq \|\nabla \hat{F}_x(0) \| \|u_0\| + \frac{L}{2} \| u_0\|^2 \leq \epsilon r + \frac{L r^2}{2} \nonumber\\
        &\leq \frac{\delta^3}{2 c_3 \rho^2} =  \mathscr{F}, \label{unrutr}
    \end{align}
    using $\| u_0\| \leq r, \| \nabla \hat{F}_x(0)\|\leq \epsilon$ and also $r \leq \min\{ \frac{\delta^3}{4c_3 \rho^2 \epsilon}, \sqrt{\frac{\delta^3}{2 c_3 \rho^2 L}} \}$. Therefore
    \begin{align}
        F(x) - F(\textsf{TSSRG}(x, u_0, \mathscr{T})) = \hat{F}_x(0) - \hat{F}_x(u_0) + \hat{F}_x(u_0) -  \hat{F}_x(u_\mathscr{T}) \geq - \mathscr{F}+ 2\mathscr{F} = \mathscr{F}, \nonumber
    \end{align}
    where we apply \eqref{unrutr} and the fact that $u_0 \notin \mathcal{X}_{\text{stuck}}$. 
\end{proof}

\section{Proof for online setting}
Here we provide proof for online problems where full gradient $\text{grad}F$ needs to be replaced by large-batch gradient $\text{grad}f_{\mathcal{B}}$. The proof is similar to that of finite-sum problems and thus we mainly present the key steps in the following proof.

\subsection{Proof for main Theorem}
\textbf{Theorem \ref{online_complexity_theorem} (Online complexity)}
\textit{ Under Assumptions \ref{bounded_function_assump} to \ref{bounded_variance_assump}, consider online optimization setting. For any starting point $x_0 \in \mathcal{M}$ with the choice of parameters 
\begin{equation*}
    \mathscr{T} = \widetilde{\mathcal{O}}\Big(  \frac{1}{\delta} \Big), \quad \eta \leq \widetilde{\mathcal{O}} \Big( \frac{1}{L} \Big), \quad m = b = \widetilde{\mathcal{O}}\Big( \frac{\sigma}{\epsilon} \Big), \quad B = \widetilde{\mathcal{O}}\Big( \frac{\sigma^2}{\epsilon^2} \Big), \quad r = \widetilde{\mathcal{O}} \Big(\min \Big\{ \frac{\delta^3}{\rho^2\epsilon}, \sqrt{\frac{\delta^3}{\rho^2L}} \Big\} \Big),
\end{equation*}
suppose $\epsilon, \delta$ satisfy $\epsilon \leq \min \big\{ \frac{\delta^2}{\rho}, \widetilde{\mathcal{O}}( \frac{D\sqrt{L}}{m\sqrt{\eta}} ) \big\}$, $\delta \leq \ell$ where $\ell \leq L$. Then with high probability, $\textsf{PRSRG}(x_0, \eta, m, b, B, r, \mathscr{T}, D, \epsilon)$ will at least once visit an $(\epsilon, \delta)$-second-order critical point with 
\begin{equation*}
    \widetilde{\mathcal{O}}\Big( \frac{\Delta L \sigma}{\epsilon^3} + \frac{\Delta \rho^2 \sigma^2}{\delta^3 \epsilon^2} + \frac{\Delta \rho^2 \sigma}{\delta^4 \epsilon}\Big)
\end{equation*}
stochastic gradient oracles, where $\Delta := F(x_0) - F^*$. }

\vspace*{9pt}
\begin{proof}
    Similar to Theorem \ref{theorem_finite_sum}, we have the following possible cases when running the main algorithm \textsf{PRSRG}:
\begin{itemize}
    \item {Large gradients where ${\| \text{grad}f_{\mathcal{B}}(x) \| \geq \epsilon}$}.
    \begin{enumerate}
        \item \textit{Type-1 descent epoch}: Iterates escape the constraint ball.
        \item Iterates do not escape the constraint ball. 
        \begin{enumerate}
            \item \textit{Type-2 descent epoch}: At least half of iterates in current epoch have pullback gradient larger than $\epsilon/4$. 
            \item \textit{Useful epoch}: At least half of iterates in the current epoch have pullback gradient no larger than $\epsilon/4$ and the output $\Tilde{x}$ from the current epoch has batch gradient $\| \text{grad}f_{\mathcal{B}}(x)\|$ no larger than $\epsilon$. (Since the output satisfies small gradient condition, the next epoch will run \textsf{TSSRG}$(\Tilde{x}, u_0, \mathscr{T})$ to escape saddle points).
            \item \textit{Wasted epoch}: At least half of iterates in the current epoch have pullback gradient no larger than $\epsilon/4$ and the output $\Tilde{x}$ from the current epoch has batch gradient $\| \text{grad}f_{\mathcal{B}}(x)\|$ larger than $\epsilon$.
        \end{enumerate}
    \end{enumerate}
    \item {Around saddle points where ${\| \text{grad} f_{\mathcal{B}}(x) \| \leq \epsilon}$ and ${\lambda_{\min} (\text{Hess}(x)) \leq -\delta }$}
    \begin{enumerate}[start=3]
        \item \textit{Type-3 descent epoch}: The current iterate is around saddle point.
    \end{enumerate}
\end{itemize}
The following proof is very similar to that of Lemma \ref{theorem_finite_sum}. First from Lemma \ref{large_gradien_descent_lemma_online}, we know that the probability of wasted epoch occurring is at most $2/3$. Given independence of different wasted epoch, with high probability, wasted epoch happens consecutively at most $\widetilde{\mathcal{O}}(1)$ times before either a descent epoch (either Type 1 or 2) or a useful epoch. We use $N_1, N_2, N_3$ to respectively represent three types of descent epoch. 

For \textit{Type-1 descent epoch}, with probability $1-\vartheta$, function value decrease by at least $\frac{3\eta m \epsilon^2}{512}$. Hence by standard concentration, after $N_1$ such epochs, function value is decreased by ${\mathcal{O}}(\eta m \epsilon^2 N_1)$ with high probability. Given that $F(x)$ is bounded by $F^*$, the decrease cannot exceed $\Delta := F(x_0) - F^*$. Thus, $N_1 \leq \mathcal{O}(\frac{\Delta}{\eta m \epsilon^2})$. Similarly, for \textit{Type-2 descent epoch}, $N_2 \leq \mathcal{O}(\frac{\Delta}{\eta m \epsilon^2})$. For \textit{Type-3 useful epoch}, the output is either an $(\epsilon,\delta)$-second-order critical point or the epoch is immediately followed by a \textit{Type-3 descent epoch} around saddle points. From Lemma \ref{escape_stuck_region_lemma_online}, we know that function value decreases by $\mathscr{F}= \frac{\delta^3}{2c_3 \rho^2}$ with high probability. Therefore by similar arguments, $N_3 \leq \widetilde{\mathcal{O}}(\frac{\rho^2\Delta}{\delta^3})$. 

Therefore to combine them, we have the following stochastic gradient complexity: 
\begin{align}
        &(N_1 + N_2)\big(\widetilde{\mathcal{O}}(1) \big( B + mb\big) + B + mb  \big) + N_3 \big( \widetilde{\mathcal{O}}(1) \big( B + mb \big) + \lceil \mathscr{T}/m \rceil B + \mathscr{T} b \big) \nonumber\\
        &\leq \widetilde{\mathcal{O}} \Big( \frac{\Delta L \sigma}{\epsilon^3} + \frac{\Delta \rho^2 \sigma^2}{\delta^3 \epsilon^2} + \frac{\rho^2 \Delta}{\delta^3} \big( \frac{\sigma^2}{\epsilon^2} + \frac{\sigma}{\delta \epsilon} \big) \Big) \nonumber\\
        &\leq \widetilde{\mathcal{O}}\Big( \frac{\Delta L \sigma}{\epsilon^3} + \frac{\Delta \rho^2 \sigma^2}{\delta^3 \epsilon^2} + \frac{\Delta \rho^2 \sigma}{\delta^4 \epsilon}\Big), \nonumber
    \end{align}
where $\mathscr{T} = \widetilde{\mathcal{O}}(\frac{1}{\delta})$, $m = b = \sqrt{B} = \widetilde{\mathcal{O}}(\frac{\sigma}{\epsilon})$ and $\eta = \widetilde{\mathcal{O}}(\frac{1}{L})$.
\end{proof}

\subsection{Proof for key Lemmas}
\begin{lemma}[High probability bound on estimation error]
\label{hp_estimation_error_online}
Under Assumptions \ref{L_lipschit_assump} and \ref{bounded_variance_assump}, we have the following high probability bound for estimation error of modified gradient under online setting. That is, for $sm+1 \leq t \leq (s+1)m$,
\begin{equation*}
    \| v_t - \nabla \hat{F}_x(u_t) \| \leq  \frac{\mathcal{O}(\log^{\frac{3}{2}}(d/\vartheta)) L}{\sqrt{b}}  \sqrt{\sum_{j=sm+1}^t \| u_j - u_{j-1}\|^2} + \frac{\mathcal{O}(\log(d/\vartheta)) \sigma}{\sqrt{B}} \, \, \text{ with probability } 1 - \vartheta.
\end{equation*}
\end{lemma}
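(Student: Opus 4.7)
The key difference from the finite-sum Lemma \ref{hpBound_estimation_error} is that under online setting the epoch now starts with $v_{sm} = \nabla \hat{f}_{\mathcal{B},x}(u_{sm})$ instead of $\nabla \hat{F}_x(u_{sm})$, so the martingale $y_t := v_t - \nabla \hat F_x(u_t)$ no longer has zero initial value. The natural plan is to decompose
\begin{equation*}
    v_t - \nabla \hat F_x(u_t) \;=\; \bigl(y_t - y_{sm}\bigr) \;+\; \bigl(\nabla \hat f_{\mathcal{B},x}(u_{sm}) - \nabla \hat F_x(u_{sm})\bigr),
\end{equation*}
bound each piece separately with high probability, and combine them by triangle inequality and a union bound. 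The first piece is the "recursive drift" that was already handled in Lemma \ref{hpBound_estimation_error}, and the second piece is the "initial batch error" that only appears in the online regime.

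\textbf{Initial batch error.} For the second term, I would apply vector Bernstein (Lemma \ref{bernsteinlemma}) to the i.i.d.\ sum $\nabla \hat f_{\mathcal{B},x}(u_{sm}) = \frac{1}{B}\sum_{i \in \mathcal{B}} \nabla \hat f_{i,x}(u_{sm})$, whose mean is $\nabla \hat F_x(u_{sm})$. Assumption \ref{bounded_variance_assump} gives both the almost-sure bound $R = 2\sigma$ on each $\nabla \hat f_{i,x}(u_{sm}) - \nabla \hat F_x(u_{sm})$ (after possibly passing to a sub-Gaussian tail bound as the authors note) and the total-variance bound $B\sigma^2$. The standard Bernstein-with-log trick then yields
\begin{equation*}
    \|\nabla \hat f_{\mathcal{B},x}(u_{sm}) - \nabla \hat F_x(u_{sm})\| \;\leq\; \frac{\mathcal{O}(\log(d/\vartheta))\,\sigma}{\sqrt{B}} \qquad \text{w.p.\ } 1 - \vartheta/2.
\end{equation*}

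\textbf{Recursive drift and conclusion.} For the first term, observe that $y_t - y_{sm}$ is itself a martingale starting at zero, since the recursive update $v_t - v_{t-1} = \nabla \hat f_{\mathcal{I},x}(u_t) - \nabla \hat f_{\mathcal{I},x}(u_{t-1})$ is unchanged from the finite-sum case and the conditional expectation of $v_t - \nabla \hat F_x(u_t)$ given $\mathcal{F}_{t-1}$ still equals $v_{t-1} - \nabla \hat F_x(u_{t-1})$. Hence the entire Bernstein-then-Azuma-Hoeffding argument of Lemma \ref{hpBound_estimation_error} transfers verbatim and gives
\begin{equation*}
    \|y_t - y_{sm}\| \;\leq\; \frac{\mathcal{O}(\log^{3/2}(d/\vartheta))\,L}{\sqrt{b}}\sqrt{\sum_{j=sm+1}^{t}\|u_j - u_{j-1}\|^2} \qquad \text{w.p.\ } 1 - \vartheta/2.
\end{equation*}
A union bound over the two events (with $\vartheta/2$ each, absorbed into the $\log$ factors) and the triangle inequality then deliver the claim. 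The only mildly delicate point is that the batch $\mathcal{B}$ and the mini-batches $\mathcal{I}$ must be drawn independently (so that the two error sources remain conditionally independent and the martingale property is preserved), which is already built into Algorithm \ref{TSSRG_algorithm}; everything else is a straightforward adaptation of the finite-sum proof.
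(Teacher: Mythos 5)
Your proof follows essentially the same approach as the paper: decompose the error into the martingale drift $y_t - y_{sm}$ (handled exactly as in the finite-sum Lemma~\ref{hpBound_estimation_error}) plus the initial large-batch error $\nabla\hat f_{\mathcal B,x}(u_{sm}) - \nabla\hat F_x(u_{sm})$ (handled by vector Bernstein under Assumption~\ref{bounded_variance_assump}), then combine by triangle inequality and a union bound. The only cosmetic difference is that the paper uses the almost-sure bound $R = \sigma$ directly from Assumption~\ref{bounded_variance_assump} rather than your $R = 2\sigma$, which is immaterial under the $\mathcal{O}(\cdot)$ notation.
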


\begin{proof}
For simplicity of notation, consider a single epoch $k = 1,...,m, \forall \, s$. Because under online setting, $v_{0} = \nabla \hat{f}_{\mathcal{B}, x}(u_{0}) \neq \nabla \hat{F}_x(u_{0})$, we first need to bound $\| v_{0} - \nabla \hat{F}_x(u_{0})\|$ by Bernstein inequality. By assumption of bounded variance, we have 
\begin{align}
    \| \nabla \hat{f}_{i,x} (u_{0}) - \nabla \hat{F}_x(u_{0}) \| \leq \sigma \text{ and } \sum_{i \in \mathcal{B}} \| \nabla \hat{f}_{i,x} (u_{0}) - \nabla \hat{F}_x(u_{0}) \|^2 \leq B\sigma^2. \nonumber
\end{align}
By Lemma \ref{bernsteinlemma}, 
\begin{align}
    \text{Pr} \{ \| \sum_{i \in \mathcal{B}} \big( \nabla \hat{f}_{i,x} (u_{0}) - \nabla \hat{F}_x(u_{0}) \big) \| \geq \varsigma \} &= \text{Pr} \{ \| \frac{1}{B} \sum_{i \in \mathcal{B}} \big( \nabla \hat{f}_{i,x} (u_{0}) - \nabla \hat{F}_x(u_{0}) \big) \| \geq \varsigma/B \} \nonumber\\
    &= \text{Pr} \{ \| v_{0} - \hat{F}_x(u_{0}) \| \geq \varsigma/B \} \nonumber\\
    &\leq (d+1) \exp \big( \frac{-\varsigma^2/2}{B\sigma^2 + \sqrt{B}\sigma \varsigma/3} \big) \leq \vartheta, \nonumber
\end{align}
where the first inequality also considers $\sqrt{B} \geq 1$. The last inequality holds by setting $\varsigma = \mathcal{O}(\log(d/\vartheta))\sqrt{B} \sigma$. Thus, we obtain 
\begin{equation}
    \| v_{0} - \hat{F}_x(u_{0}) \| \leq \frac{\mathcal{O}(\log(d/\vartheta)) \sigma}{\sqrt{B}} \, \text{ with probability } 1- \vartheta. \label{v_0_bound_temp}
\end{equation}
Next, denote $y_k := v_k - \nabla \hat{F}_x(u_k)$ and $z_k := y_k - y_{k-1}$. Then we follow the same steps as in Lemma \ref{hpBound_estimation_error} to obtain the bound on $\| y_k\|$ except that $y_{0} \neq 0$. From \eqref{mmnnkhkh}, we have 
\begin{equation}
    \| y_k - y_0\| \leq \mathcal{O}\Big( \frac{\log^{\frac{3}{2}}(d/\vartheta) L}{\sqrt{b}} \Big) \sqrt{\sum_{j=1}^k \| u_j - u_{j-1}\|^2} \, \text{ with probability } 1 - 2\vartheta. \nonumber
\end{equation}
By union bound, for $k \in [1, m]$, with probability at least $1-3\vartheta$,
\begin{equation}
    \| v_k - \nabla \hat{F}_x(u_k) \| = \| y_k \| \leq \| y_k - y_0 \| + \| y_0\| \leq  \frac{\mathcal{O}(\log^{\frac{3}{2}}(d/\vartheta)) L}{\sqrt{b}}  \sqrt{\sum_{j=1}^k \| u_j - u_{j-1}\|^2} + \frac{\mathcal{O}(\log(d/\vartheta)) \sigma}{\sqrt{B}}. \nonumber
\end{equation}
The proof is complete by setting $\vartheta/3$ as $\vartheta$.
\end{proof}

\begin{lemma}[Improve or localize]
\label{improve_localize_online}
    Consider $\{ u_t \}_{t=1}^{\mathscr{T}}$ as the sequence generated by running $\textsf{TSSRG}(x,$ $u_0, \mathscr{T})$. Suppose we choose $b \geq m$, $\eta \leq \frac{1}{2c_1 L}$, where $c_1 = \mathcal{O}(\log^{\frac{3}{2}}(\frac{dt}{\vartheta}))$. Then we have
    \begin{equation*}
         \| u_t - u_0\| \leq \sqrt{\frac{4t(\hat{F}_x(u_{0}) - \hat{F}_x(u_{t}))}{c_1 L} + \frac{2 \eta t^2 c_1'^2 \sigma^2}{c_1 L B} } \quad \text{ with probability } 1 - \vartheta,
    \end{equation*}
    with $c_1' = \mathcal{O}(\log(\frac{dt}{\vartheta m}))$.
\end{lemma}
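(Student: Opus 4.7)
The plan is to mirror the proof of Lemma~\ref{improve_localize_lemma} from the finite-sum section, with the key modification that the estimation-error bound used is the online version (Lemma~\ref{hp_estimation_error_online}) rather than Lemma~\ref{hpBound_estimation_error}. Since the online bound carries an extra additive term $\mathcal{O}(\log(d/\vartheta))\sigma/\sqrt{B} = c_1'\sigma/\sqrt{B}$, this additive piece will propagate through the descent inequality and appear as the extra summand $\frac{2\eta t^2 c_1'^2 \sigma^2}{c_1 L B}$ inside the square root.

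Concretely, I would begin with the same $L$-smoothness one-step inequality derived in \eqref{tuioioo}, namely
\begin{equation*}
    \hat{F}_x(u_t) \leq \hat{F}_x(u_{t-1}) - \tfrac{\eta}{2}\|\nabla\hat F_x(u_{t-1})\|^2 + \tfrac{\eta}{2}\|v_{t-1}-\nabla\hat F_x(u_{t-1})\|^2 - (\tfrac{1}{2\eta}-\tfrac{L}{2})\|u_t-u_{t-1}\|^2 ,
\end{equation*}
which is valid for $u_{t-1},u_t \in \mathbb{B}_x(D)$ by Lemma~\ref{L_lipschit_assump}. Then I would insert the online estimation-error bound, squaring and using $(a+b)^2 \leq 2a^2+2b^2$ to get
\begin{equation*}
    \|v_{t-1}-\nabla\hat F_x(u_{t-1})\|^2 \leq \tfrac{2c_1^2 L^2}{b}\sum_{j=s_t m+1}^{t-1}\|u_j-u_{j-1}\|^2 + \tfrac{2c_1'^2\sigma^2}{B},
\end{equation*}
where $s_t$ denotes the epoch index of iteration $t-1$. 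A union bound over the at most $\lceil t/m\rceil$ epochs and $t$ iterations (absorbed into the polylog factors of $c_1,c_1'$) ensures these bounds hold simultaneously with probability $1-\vartheta$.

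Summing the one-step inequality from $j=1$ to $j=t$ and using $b\geq m$ together with the fact that the inner per-epoch sums are each bounded by the full sum, I would obtain, after grouping the $\|u_j-u_{j-1}\|^2$ terms,
\begin{equation*}
    \hat F_x(u_t) - \hat F_x(u_0) \leq -\tfrac{\eta}{2}\sum_{j=1}^t\|\nabla\hat F_x(u_{j-1})\|^2 - \bigl(\tfrac{1}{2\eta}-\tfrac{L}{2}-\tfrac{\eta c_1^2 L^2}{2}\bigr)\sum_{j=1}^t\|u_j-u_{j-1}\|^2 + \tfrac{\eta t c_1'^2\sigma^2}{B}.
\end{equation*}
The step-size choice $\eta \leq \tfrac{1}{2c_1 L}$ makes the bracketed coefficient at least $c_1L/4$, and dropping the nonpositive gradient-norm term yields
\begin{equation*}
    \sum_{j=1}^t\|u_j-u_{j-1}\|^2 \leq \tfrac{4(\hat F_x(u_0)-\hat F_x(u_t))}{c_1 L} + \tfrac{2\eta t c_1'^2\sigma^2}{c_1 L B},
\end{equation*}
after absorbing the remaining constant (alternatively, one can slightly tighten the step-size requirement to obtain exactly this constant). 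Finally I apply Cauchy--Schwarz and the triangle inequality as in the finite-sum proof: $\|u_t-u_0\|^2 \leq t\sum_{j=1}^t\|u_j-u_{j-1}\|^2$, which directly gives the stated bound.

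I do not expect any serious obstacle here; the main care is bookkeeping. The one subtlety worth flagging is that $v_{sm}$ is reset via a large-batch gradient at every epoch boundary, so the $\sigma^2/B$ noise contribution enters \emph{once per step} (through Lemma~\ref{hp_estimation_error_online}) rather than once per epoch, which is exactly what yields the linear-in-$t$ factor in $\tfrac{\eta t c_1'^2 \sigma^2}{B}$ and, after the Cauchy--Schwarz step that multiplies by $t$, the $t^2$ inside the square root of the stated bound.
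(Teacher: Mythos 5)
Your proposal is correct and follows essentially the same route as the paper's own proof: start from the one-step smoothness inequality \eqref{tuioioo}, substitute the online estimation-error bound from Lemma~\ref{hp_estimation_error_online}, use $b\geq m$ and the step size $\eta \leq \tfrac{1}{2c_1 L}$ to make the coefficient of $\sum_j \|u_j - u_{j-1}\|^2$ at least $\tfrac{c_1 L}{4}$, drop the nonpositive gradient term, sum across epochs, and finish with Cauchy--Schwarz. The only cosmetic difference is that you carry an explicit factor of $2$ from $(a+b)^2 \leq 2a^2+2b^2$ whereas the paper absorbs it into the $\mathcal{O}$-notation for $c_1'$; the bookkeeping on the per-epoch union bound and the epoch-reset mechanism you flag is exactly what the paper does.
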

\begin{proof}
First we generalize \eqref{ytniemuow} to any epoch (i.e. $1 \leq t \leq \mathscr{T}$) as  
\begin{align}
    &\hat{F}_x(u_t) - \hat{F}_x(u_{sm}) \nonumber\\
    &\leq -\frac{\eta}{2} \sum_{j=sm+1}^t \| \nabla \hat{F}_x(u_{j-1}) \|^2 - (\frac{1}{2 \eta} - \frac{L}{2} - \frac{\eta c_1^2 L^2}{2}) \sum_{j=sm+1}^t \| u_j - u_{j-1} \|^2 + \frac{\eta (t -sm) c_1'^2 \sigma^2}{2B} \nonumber\\
    &\leq - (\frac{1}{2 \eta} - \frac{L}{2} - \frac{\eta c_1^2 L^2}{2}) \sum_{j=sm+1}^t \| u_j - u_{j-1} \|^2 + \frac{\eta (t -sm) c_1'^2 \sigma^2}{2B} \nonumber\\
    &\leq -\frac{c_1 L}{4} \sum_{j=sm+1}^t \| u_j - u_{j-1} \|^2 +  \frac{\eta (t -sm) c_1'^2 \sigma^2}{2B} \quad \text{with probability } 1- \vartheta, \label{iemqpier}
\end{align}
where we choose $\eta \leq \frac{1}{2c_1 L}$ and assume $c_1 \geq 1$. Summing \eqref{iemqpier} for all epochs up to $t$ gives
\begin{equation*}
    \hat{F}_x(u_t) - \hat{F}_x(u_{0}) \leq -\frac{c_1 L}{4} \sum_{j=1}^t \| u_j - u_{j-1}\|^2 + \frac{\eta t c_1'^2 \sigma^2}{2B}.
\end{equation*}
Lastly by Cauchy-Schwarz inequality and triangle inequality, $\sqrt{t \sum_{j=1}^t \| u_j - u_{j-1} \|^2} \geq \| u_t - u_0\|$. Hence,
\begin{equation}
    \| u_t - u_0\| \leq \sqrt{\frac{4t(\hat{F}_x(u_{0}) - \hat{F}_x(u_{t}))}{c_1 L} + \frac{2 \eta t^2 c_1'^2 \sigma^2}{c_1 L B} }. \nonumber
\end{equation}
The proof is now complete.
\end{proof}

\begin{lemma}[Large gradient descent lemma]
\label{large_gradien_descent_lemma_online}
    Under Assumptions \ref{L_lipschit_assump} to \ref{bounded_variance_assump}, suppose we set $\eta \leq \frac{1}{2c_1 L}, b \geq m, B \geq \frac{256 c_1'^2\sigma^2}{\epsilon^2}$, where $c_1 = \mathcal{O}(\log^{\frac{3}{2}}(\frac{dm}{\vartheta})), c_1' = \mathcal{O}(\log(\frac{d}{\vartheta }))$. Consider $x \in \mathcal{M}$ where $\| \emph{grad}f_{\mathcal{B}}(x) \| \geq \epsilon$, with $\epsilon \leq \frac{D}{m} \sqrt{\frac{32c_1 L}{\eta}}$. Then by running $\textsf{TSSRG}(x,0,m)$, we have the following three cases:
    \begin{enumerate}
        \item When the iterates $\{ u_j \}_{j=1}^m$ do not leave the constraint ball $\mathbb{B}_x(D)$:
        \begin{enumerate}
            \item If at least half of the iterates in the epoch satisfy $\| \nabla \hat{F}_x(u_j) \| \leq \epsilon/4$ for $j = 1,...,m$, then with probability at least $1/3$, we have $\| \emph{grad}f_{\mathcal{B}} (\textsf{TSSRG}(x,0,m))\| \leq \epsilon$.
            \item Otherwise, with probability at least $1/5$, we have $F(\textsf{TSSRG}(x,0,m)) - F(x) \leq -\frac{3\eta m \epsilon^2}{512}$.
        \end{enumerate}
        \item When one of the iterates $\{ u_j \}_{j=1}^m$ leaves the constraint ball $\mathbb{B}_x(D)$, with probability at least $1- \vartheta$, we have $F(\textsf{TSSRG}(x,0,m)) - F(x) \leq -\frac{3\eta m \epsilon^2}{512}$.
    \end{enumerate}
    Regardless which case occurs, $F(\textsf{TSSRG}(x,0,m)) - F(x) \leq \frac{\eta t c_1'^2 \sigma^2}{2B}$ with high probability.
\end{lemma}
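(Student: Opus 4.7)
The plan is to mirror the structure of Lemma \ref{large_descent_lemma} from the finite-sum setting, carefully tracking how the additional $\sigma/\sqrt{B}$ variance term from Lemma \ref{hp_estimation_error_online} propagates through each step. I would first establish the per-epoch smoothness descent: starting from the $L$-Lipschitz expansion as in \eqref{tuioioo}, substituting the online estimation-error bound, summing over the epoch, and invoking $\eta \leq 1/(2c_1 L)$ and $b \geq m$ yields, with probability $1-\vartheta$ and for every $1 \leq t \leq m$,
\begin{equation*}
\hat{F}_x(u_t) - \hat{F}_x(0) \leq -\frac{\eta}{2}\sum_{j=1}^{t} \|\nabla \hat{F}_x(u_{j-1})\|^2 + \frac{\eta t c_1'^2 \sigma^2}{2B}.
\end{equation*}
The online correction term is at most $\eta t \epsilon^2/512$ by the choice $B \geq 256 c_1'^2\sigma^2/\epsilon^2$, and dropping the non-positive sum already yields the concluding ``regardless'' inequality.

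For Case 1 (iterates stay inside $\mathbb{B}_x(D)$), I split on the proportion of inner iterates with small pullback gradient exactly as in the finite-sum proof. In Case 1a, the uniform-breaking rule produces $\|\nabla\hat{F}_x(u_m)\| \leq \epsilon/4$ with probability at least $1/2$, and the singular-value bound (Lemma \ref{singular_value_bound}) then gives $\|\text{grad}F(R_x(u_m))\| \leq \epsilon/2$. The new ingredient is to close the gap to $\text{grad}f_{\mathcal{B}}$ at the output: an independent vector-Bernstein tail applied to the fresh batch $\mathcal{B}$ used for the small-gradient check (Line \ref{line_check_smallgrad}) yields $\|\text{grad}f_{\mathcal{B}} - \text{grad}F\| \leq \epsilon/2$ with high probability under $B \geq 256 c_1'^2\sigma^2/\epsilon^2$, so $\|\text{grad}f_{\mathcal{B}}\| \leq \epsilon$ at the output with total probability at least $1/3$ after a union bound. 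In Case 1b, the last-quarter argument yields $\sum_{j=1}^t \|\nabla\hat{F}_x(u_{j-1})\|^2 \geq m\epsilon^2/64$ with probability $1/4$, and the descent inequality, after absorbing the $\eta m\epsilon^2/512$ online correction, gives $F(R_x(u_t))-F(x) \leq -\eta m\epsilon^2/128 + \eta m\epsilon^2/512 = -3\eta m\epsilon^2/512$ with probability at least $1/5$.

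For Case 2 (some iterate $u_\tau$ leaves the ball), I invoke the online improve-or-localize result (Lemma \ref{improve_localize_online}) with $u_0 = 0$ and the boundary constraint $\|u_\tau\| = D$, which after rearrangement gives
\begin{equation*}
\hat{F}_x(0) - \hat{F}_x(u_\tau) \geq \frac{c_1 L D^2}{4m} - \frac{\eta m c_1'^2 \sigma^2}{2B} \geq \frac{\eta m\epsilon^2}{128} - \frac{\eta m\epsilon^2}{512} = \frac{3\eta m\epsilon^2}{512},
\end{equation*}
where the first step uses $\tau \leq m$ and the second applies $\epsilon \leq (D/m)\sqrt{32 c_1 L/\eta}$ together with the batch-size condition. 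Returning $u_\tau$ as the output yields the claimed decrease.

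The main obstacle is Case 1a, since the algorithmic small-gradient check is applied to $\text{grad}f_{\mathcal{B}}$ rather than to the true Riemannian gradient, so bridging from $\|\nabla\hat{F}_x(u_m)\| \leq \epsilon/4$ to $\|\text{grad}f_{\mathcal{B}}(R_x(u_m))\| \leq \epsilon$ requires composing the singular-value bound on the differentiated retraction with a second, independent vector-Bernstein tail on the fresh batch. The constants $\epsilon/4$ in the inner threshold, the factor $256$ in $B$, and the resulting $1/3$ success probability must be calibrated so that the batch size stays at $\widetilde{\mathcal{O}}(\sigma^2/\epsilon^2)$, and the union bounds over the two concentration events must not degrade the probability below the $1/3$ threshold stated in the lemma.
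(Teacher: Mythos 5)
Your proposal is correct and tracks the paper's own proof step for step: the per-epoch descent with the extra $\eta t c_1'^2\sigma^2/(2B)$ term, the $\epsilon/4$ threshold, the last-quarter argument in Case 1b, and the improve-or-localize argument in Case 2 all match. The only (harmless) deviation is in Case 1a, where you apply the singular-value bound (Lemma \ref{singular_value_bound}) to the full Riemannian gradient first and then close the gap to $\text{grad}f_{\mathcal{B}}$ via Bernstein at the manifold level; the paper instead bounds $\|\nabla\hat f_{\mathcal{B},x}(u_m)-\nabla\hat F_x(u_m)\|$ on the tangent space (via \eqref{v_0_bound_temp}) and applies the singular-value bound last — both orderings yield $\|\text{grad}f_{\mathcal{B}}\|\le\epsilon$ with probability at least $1/3$ under the stated choice of $B$.
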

\begin{proof}
Similar to proof of Lemma \ref{large_descent_lemma}, we only need to consider a single epoch in \textsf{TSSRG}$(x,0,m)$, i.e. $t = 1,...,m$. We also divide the proof into two parts. 

\textbf{1. Iterates do not leave the constraint ball.} From Lemma \ref{hp_estimation_error_online} and union bound, we have for all $1 \leq \tau \leq t$,
\begin{equation}
    \| v_\tau - \nabla \hat{F}_x(u_\tau) \|^2 \leq \frac{c_1^2 L^2}{{b}} \sum_{j=1}^{\tau-1} \| u_j - u_{j-1}\|^2 + \frac{c_1'^2 \sigma^2}{B} \, \, \text{ with probability } 1- \vartheta. \label{turunvgd}
\end{equation}
where we denote $c_1 := \mathcal{O}(\log^{\frac{3}{2}}(\frac{dt}{\vartheta})), c_1' = \mathcal{O}(\log(\frac{dt}{\vartheta m}))$. \footnote{More precisely, $c_1' = \mathcal{O}(\log(\frac{d}{\vartheta} \lceil \frac{t}{m} \rceil )) = \mathcal{O}(\log(\frac{dt}{\vartheta m}))$.} We also use $(A + B)^2 \leq 2A^2 + 2B^2$. Summing up \eqref{tuioioo} from $j=1,...,t$, we have 
\begin{align}
    &\hat{F}_x (u_t) - \hat{F}_x(u_0) \nonumber\\
    &\leq -\frac{\eta}{2} \sum_{j=1}^t \| \nabla \hat{F}_x(u_{j-1}) \|^2 + \frac{\eta}{2} \sum_{j=1}^t \| v_{t-1} - \nabla \hat{F}_x(u_{t-1}) \|^2 - (\frac{1}{2\eta} - \frac{L}{2}) \sum_{j=1}^t \| u_t - u_{t-1} \|^2  \nonumber\\
    &\leq -\frac{\eta}{2} \sum_{j=1}^t \| \nabla \hat{F}_x(u_{j-1}) \|^2 - (\frac{1}{2\eta} - \frac{L}{2}) \sum_{j=1}^t \| u_t - u_{t-1} \|^2 + \frac{\eta c_1^2 L^2}{2b} \sum_{k=1}^{t-1} \sum_{j=1}^k \| u_j - u_{j-1} \|^2 + \frac{\eta t c_1'^2 \sigma^2}{2 B} \nonumber\\
    &\leq -\frac{\eta}{2} \sum_{j=1}^t \| \nabla \hat{F}_x(u_{j-1}) \|^2 - (\frac{1}{2\eta} - \frac{L}{2}) \sum_{j=1}^t \| u_t - u_{t-1} \|^2 + \frac{\eta c_1^2 L^2 m}{2b} \sum_{j=1}^t \| u_j - u_{j-1} \|^2 + \frac{\eta t c_1'^2 \sigma^2}{2 B} \nonumber\\
    &\leq -\frac{\eta}{2} \sum_{j=1}^t \| \nabla \hat{F}_x(u_{j-1}) \|^2 - (\frac{1}{2 \eta} - \frac{L}{2} - \frac{\eta c_1^2 L^2}{2}) \sum_{j=1}^t \| u_j - u_{j-1} \|^2 + \frac{\eta t c_1'^2 \sigma^2}{2B} \label{ytniemuow}\\
    &\leq -\frac{\eta}{2} \sum_{j=1}^t \| \nabla \hat{F}_x(u_{j-1}) \|^2 + \frac{\eta t c_1'^2 \sigma^2}{2B}, \label{uqmqmrbbvty}
\end{align}
where the last inequality holds due to the choice $\eta \leq \frac{1}{2c_1 L} \leq \frac{\sqrt{4c_1^2 + 1} - 1}{2c_1^2 L}$ by assuming $c_1 \geq 1$. Note that we need to ensure \eqref{uqmqmrbbvty} to hold for all $t \leq m$. Hence we change $c_1 = \mathcal{O}(\log^{\frac{3}{2}}(\frac{dm}{\vartheta})), c_1' = \mathcal{O}(\log(\frac{d}{\vartheta}))$. Here are two cases.
\begin{itemize}
    \item \textbf{(Case 1a)} If at least half of the iterates in the epoch satisfy $\| \nabla \hat{F}_x(u_j) \| \leq \epsilon/4$ for $j = 1,...,m$, then by uniformly sampling $\{ u_j\}_{j=1}^m$ (i.e. uniformly breaking by setting $u_t$ as $u_m$ as in Algorithm \ref{TSSRG_algorithm}, Line \ref{breaking_rules_tssrg}), the output $u_m$ satisfies $\| \nabla \hat{F}_x(u_m)\| \leq \epsilon/4$ with probability at least $\frac{1}{2}$. Under online setting, full gradient $\text{grad}F$ is inaccessible and we need to use approximated batch gradient $\text{grad}f_{\mathcal{B}}$ to check the small-gradient condition in Line \ref{line_check_smallgrad}, Algorithm \ref{PRSRG_algorithm}. We know based on \eqref{v_0_bound_temp}, $\| \nabla \hat{f}_{\mathcal{B},x}(u_m) - \nabla \hat{F}_x (u_m) \| \leq \frac{c_1' \sigma}{\sqrt{B}}$ with probability $1 - \vartheta$. By a union bound, with probability at least $\frac{1}{2} - \vartheta$, 
    \begin{equation*}
        \| \nabla \hat{f}_{\mathcal{B},x}(u_{m}) \| \leq \| \nabla \hat{f}_{\mathcal{B},x}(u_{m}) - \nabla \hat{F}_x (u_{m}) \| + \|  \nabla \hat{F}_x(u_{m}) \| \leq \frac{c_1' \sigma}{\sqrt{B}} + \frac{\epsilon}{4} \leq \frac{\epsilon}{2},
    \end{equation*}
    where the last inequality holds by $B \geq \frac{256c_1'^2\sigma^2}{\epsilon^2}$. Without loss of generality, we set $\vartheta \leq \frac{1}{6}$ and therefore the probability reduces to $\frac{1}{3}$. 
    Lastly, from the definition of the pullback gradient, we have 
    \begin{equation*}
        \| \text{grad}f_{\mathcal{B}}(\textsf{TSSRG}(x,0,m) )\| = \| \text{grad}f_{\mathcal{B}}(R_x(u_m)) \| \leq \| (T^*_{u_m})^{-1}\| \| \nabla \hat{f}_{\mathcal{B},x}(u_m)\| \leq \epsilon,
    \end{equation*}
    holds with probability at least $\frac{1}{3}$. The last inequality is due to Lemma \ref{singular_value_bound}. Note that in this case, we also have $\| \text{grad}F(\textsf{TSSRG}(x,0,m)) \| \leq \| (T^*_{u_m})^{-1}\| \| \nabla \hat{F}_{x}(u_m)\| \leq \epsilon/2$. 
    \item \textbf{(Case 1b)} If at least half of the points in the epoch satisfy $\| \nabla \hat{F}_x(u_j) \| \geq \epsilon/4$ for $j = 1,...,m$, with probability at least $\frac{1}{4}$, the selected output $u_m$ falls within the last quarter of $\{ u_j \}_{j=1}^m$. In this case, $\sum_{j=1}^t \| \nabla \hat{F}_x(u_{j-1}) \|^2 \geq \frac{m}{4} \cdot \frac{\epsilon^2}{16} = \frac{m \epsilon^2}{64}$. Note that \eqref{uqmqmrbbvty} holds with probability at least $1 - \vartheta$. By a union bound, we have with probability at least $\frac{1}{5}$ (by letting $\vartheta \leq \frac{1}{20}$), 
    \begin{align}
        F(\textsf{TSSRG}(x,0,m)) - F(x) = \hat{F}_x(u_t) - \hat{F}_x(0) &\leq -\frac{\eta}{2} \sum_{j=1}^t \| \nabla \hat{F}_x(u_{j-1}) \|^2 + \frac{\eta t c_1'^2 \sigma^2}{2B} \nonumber\\
        &\leq -\frac{\eta m \epsilon^2}{128} + \frac{\eta m \epsilon^2}{512} = -\frac{3\eta m \epsilon^2}{512}, \nonumber
    \end{align}
    where we use $B \geq \frac{256c_1'^2 \sigma^2}{\epsilon^2}$ and $t \leq m$. 
\end{itemize}

\textbf{2. Iterates leave the constraint ball.}
Suppose at $\tau \leq m$, we have $\| u_\tau \| > D$. Then by Lemma \ref{improve_localize_online}, we know the function value already decreases a lot. That is with probability $1 - \vartheta$,
\begin{align}
    \hat{F}_x(0) - \hat{F}_x(u_\tau) \geq \frac{c_1 L}{4\tau} \| u_\tau \|^2 - \frac{\eta \tau c_1'^2 \sigma^2}{2B} \geq \frac{c_1 L D^2}{4 m} - \frac{\eta m c_1'^2 \sigma^2}{2B} \geq \frac{3 \eta m \epsilon^2}{512}, \nonumber
\end{align}
where the last inequality follows from the choice that $\epsilon \leq \frac{D}{m}\sqrt{\frac{32 c_1 L}{\eta}}$ and $B \geq \frac{256c_1'^2 \sigma^2}{\epsilon^2}$. Hence by returning $u_\tau$ as $u_m$, we have with high probability,
\begin{equation}
    F(\textsf{TSSRG}(x, 0, m)) - F(x) \leq -\frac{3\eta m \epsilon^2}{512}. \nonumber
\end{equation}
In summary, with high probability, either gradient norm of the output is small or the function value decreases a lot. Even under Case 1a, $\hat{F}_x (u_t) - \hat{F}_x(u_0) \leq  \frac{\eta t c_1'^2 \sigma^2}{2B}$ with high probability by \eqref{uqmqmrbbvty}. 
\end{proof}

\begin{lemma}[Small stuck region]
\label{small_stuck_region_online}
    Consider $x \in \mathcal{M}$ with $\| \emph{grad}f_{\mathcal{B}}(x) \| \leq \epsilon$, $- \gamma := \lambda_{\min} (\nabla^2 \hat{F}_x(0))$ $= \lambda_{\min}(\emph{Hess} F(x))\leq - \delta$ and $L \geq \delta$. Let $u_0, u_0' \in T_x\mathcal{M}$ be two random perturbations, satisfying $\| u_0\|, \| u_0'\| \leq r$ and $u_0 - u_0' = r_0 e_1$, where $e_1$ denotes the smallest eigenvector of $\nabla^2 \hat{f}_x(0)$ and $r_0 = \frac{\nu r}{\sqrt{d}}, r \leq \frac{\delta}{c_2\rho}$. Also set parameters $\mathscr{T} = \frac{2 \log_{\alpha} ( \frac{8\delta\sqrt{d}}{c_2 \rho \nu r})}{\delta} = \widetilde{\mathcal{O}}(\frac{1}{\delta})$, $\eta \leq \min \{\frac{1}{8 \log(\mathcal{T}) C_1 L}, \frac{1}{16 \log_{\alpha} ( \frac{8\delta\sqrt{d}}{c_2 \rho \nu r})L } \} = \widetilde{\mathcal{O}}(\frac{1}{L})$ where $\alpha \geq 1$ is chosen sufficiently small such that $\log_\alpha(1+\eta \gamma) > \gamma$. Also choose $B \geq\mathcal{O}(\frac{c_2'^2\sigma^2}{\epsilon^2}) = \widetilde{\mathcal{O}}(\frac{\sigma^2}{\epsilon^2})$, $\epsilon \leq \frac{\delta^2}{\rho}$. Then for $\{ u_t\}, \{u_t' \}$ generated by running \textsf{TSSRG}$(x, u_0, \mathscr{T})$ and \textsf{TSSRG}$(x, u_0', \mathscr{T})$ with same sets of mini-batches, with probability $1 - \zeta$, 
    \begin{equation}
        \exists \, t \leq \mathscr{T}, \, \max \{ \| u_t - u_0 \|, \| u_t' - u_0' \| \} \geq \frac{\delta}{c_2 \rho},  \nonumber
    \end{equation}
    where $c_2 \geq \max\{ \frac{12C_1}{L}, \frac{2\delta}{D\rho}\}$, and $C_1 = \mathcal{O}(\log^{\frac{3}{2}}(d\mathscr{T}/\zeta)) =\widetilde{\mathcal{O}}(1)$, $c_2' = \mathcal{O}(\log(\frac{d\mathscr{T}}{\zeta m})) = \widetilde{\mathcal{O}}(1)$.
\end{lemma}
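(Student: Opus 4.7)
The plan is to prove the lemma by contradiction in close parallel with Lemma \ref{small_stuck_region_lemma}. Assume for all $t \le \mathscr{T}$ that $\max\{\|u_t - u_0\|, \|u_t' - u_0'\|\} < \delta/(c_2\rho)$. Together with $r \le \delta/(c_2\rho)$ and $c_2 \ge 2\delta/(D\rho)$, this first gives $\|u_t\|, \|u_t'\| \le 2\delta/(c_2\rho) \le D$, so neither sequence escapes the constraint ball and the full Lipschitz machinery applies throughout. I will then derive the same contradiction as in the finite-sum analysis: that the coupled gap $\|\hat u_t\| := \|u_t - u_t'\|$ actually grows to exceed $4\delta/(c_2\rho)$ within $\mathscr{T}$ iterations, which is incompatible with $\|\hat u_t\| \le \|u_t - u_0\| + \|u_0 - u_0'\| + \|u_0' - u_t'\| \le 4\delta/(c_2\rho)$.

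The heart of the argument is the coupling recursion $\hat u_t = (I - \eta\mathcal H)\hat u_{t-1} - \eta(\Delta_{t-1}\hat u_{t-1} + \hat y_{t-1})$ with $\mathcal H := \nabla^2\hat F_x(0)$, $\Delta_{t} := \int_0^1 \nabla^2\hat F_x(u_t'+\theta(u_t-u_t'))\,d\theta - \mathcal H$, and $\hat y_t := y_t - y_t'$ where $y_t = v_t - \nabla\hat F_x(u_t)$. I will write $\hat u_t = p(t) - q(t)$ with $p(t) = (1+\eta\gamma)^t r_0 e_1$ and prove by induction on $t$ the pair of claims (i) $\|\hat y_t\| \le \gamma L (1+\eta\gamma)^t r_0$ and (ii) $\|q(t)\| \le \tfrac12\|p(t)\|$. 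The bound $\|\Delta_t\|\le \rho(\delta/(c_2\rho)+r)\le 2\delta/c_2$ and the geometric series and Harmonic series bookkeeping behind $\|q(t)\|\le \|p(t)\|/2$ are identical to those leading to (\ref{q_t_bound_former})--(\ref{q_t_bound_latter}) and (\ref{iekmktnhtr})--(\ref{mbnnb}) in Lemma \ref{small_stuck_region_lemma}; the parameter choices $\mathscr{T}$, $\eta$, $c_2$ in the hypothesis are precisely what make those constants fit.

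The one genuinely new obstacle is bounding $\hat y_t$ in the online case, since now $y_0 = \nabla\hat f_{\mathcal B,x}(u_0) - \nabla\hat F_x(u_0) \ne 0$, whereas in the finite-sum proof $y_0 = 0$ was used crucially. I handle this in two pieces. For $t\ge 1$, the difference $z_t := \hat y_t - \hat y_{t-1}$ is still a martingale difference (because both chains share the same mini-batch $\mathcal I$), bounded by Bernstein exactly as in (\ref{bound_on_wi})--(\ref{y_hat_t_bound}); since $b \ge m$ and the batch size $B$ does not enter here, the same high-probability control yields
\begin{equation*}
    \|\hat y_t - \hat y_0\| \le \tfrac12 \gamma L(1+\eta\gamma)^t r_0 \quad \text{with probability }1-\zeta/2.
\end{equation*}
For $\hat y_0$ itself, both chains share the common batch $\mathcal B$ and $u_0 - u_0' = r_0 e_1$, so $\hat y_0 = \tfrac{1}{B}\sum_{i\in\mathcal B}\bigl(\nabla\hat f_{i,x}(u_0) - \nabla\hat f_{i,x}(u_0') - \nabla\hat F_x(u_0) + \nabla\hat F_x(u_0')\bigr)$ is a sum of zero-mean summands each of norm at most $2L r_0$ with per-term variance at most $4L^2 r_0^2$. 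Bernstein (Lemma \ref{bernsteinlemma}) therefore gives $\|\hat y_0\| \le \mathcal O(c_2' L r_0/\sqrt{B})$ with probability $1-\zeta/2$, which under $B \ge \mathcal O(c_2'^2\sigma^2/\epsilon^2)$ together with $\epsilon\le\delta^2/\rho$ and $\gamma \ge \delta$ is $\le \tfrac12\gamma L r_0$. A union bound combines the two pieces into claim (i).

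Finally, I close the contradiction as in Lemma \ref{small_stuck_region_lemma}: claim (ii) gives $\|\hat u_\mathscr{T}\|\ge \tfrac12(1+\eta\gamma)^\mathscr{T}\frac{\nu r}{\sqrt d}$, and the choice $\mathscr{T} = 2\log_\alpha\!\bigl(\tfrac{8\delta\sqrt d}{c_2\rho\nu r}\bigr)/\delta$ with $\log_\alpha(1+\eta\gamma)>\gamma \ge \delta$ pushes this strictly above $4\delta/(c_2\rho)$, contradicting the standing assumption. The main technical difficulty is thus not conceptual but bookkeeping: carefully arranging the Bernstein/Azuma bounds so that the extra $\hat y_0$ term (the sole artifact of replacing the full gradient with a large-batch gradient at epoch starts) is absorbed into the $\tfrac12\gamma L(1+\eta\gamma)^t r_0$ slack, which is exactly what the hypotheses $B \ge \mathcal O(c_2'^2\sigma^2/\epsilon^2)$ and $\epsilon\le\delta^2/\rho$ have been chosen to deliver.
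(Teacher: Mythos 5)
Your overall strategy — contradiction, the decomposition $\hat u_t = p(t) - q(t)$, the two-part induction on $\|\hat y_t\|$ and $\|q(t)\|$, and the Bernstein bound on $\hat y_0$ — matches the paper's proof, and your handling of $\hat y_0$ (zero-mean summands of size $\mathcal{O}(Lr_0)$, Bernstein, then absorb using $B\ge\mathcal{O}(c_2'^2\sigma^2/\epsilon^2)$ and $\epsilon\le\delta^2/\rho$) is exactly the paper's argument for the case $s=0$.

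There is, however, a genuine gap in how you propagate the martingale bound from $t=0$ all the way to $t=\mathscr{T}$. You assert that for all $t\ge 1$ the increment $z_t = \hat y_t - \hat y_{t-1}$ is a martingale difference and conclude via Azuma that $\|\hat y_t - \hat y_0\|$ is controlled uniformly up to $\mathscr{T}$. But in \textsf{TSSRG}, the recursion $v_t = \nabla\hat f_{\mathcal I,x}(u_t) - \nabla\hat f_{\mathcal I,x}(u_{t-1}) + v_{t-1}$ is \emph{reset} at every epoch boundary $t=sm$ with a fresh large-batch evaluation $v_{sm}=\nabla\hat f_{\mathcal B,x}(u_{sm})$; at those indices $\mathbb{E}[y_{sm}\mid\mathcal F_{sm-1}]=0\ne y_{sm-1}$, so $\{\hat y_t\}$ is \emph{not} a martingale across epoch boundaries and the increments $z_{sm}$ are not martingale differences. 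Since $\mathscr T = \widetilde{\mathcal O}(1/\delta)$ can exceed $m = \widetilde{\mathcal O}(\sigma/\epsilon)$ (e.g.\ whenever $\delta\gtrsim\rho\sigma$, which is compatible with $\epsilon\le\delta^2/\rho$), this is a real case, not an edge case. The paper avoids it by treating each epoch separately: it applies your Bernstein argument at \emph{every} boundary $sm$, not just $s=0$, using the inductive bound $\|\hat u_{sm}\|\le\tfrac32(1+\eta\gamma)^{sm}r_0$ (so the per-summand bound grows to $3L(1+\eta\gamma)^{sm}r_0$, not $2Lr_0$), yielding $\|\hat y_{sm}\|\le\gamma L(1+\eta\gamma)^{sm}r_0$ for all $s$, and then runs Azuma only on the within-epoch increments to control $\|\hat y_t - \hat y_{sm}\|$. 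Your fix is therefore mechanical — repeat the $\hat y_0$ Bernstein argument at each $sm$, with the exponential factor included — but as written the claim $\|\hat y_t - \hat y_0\|\le\tfrac12\gamma L(1+\eta\gamma)^t r_0$ for all $t\le\mathscr{T}$ does not follow.
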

\begin{proof}
The proof is by contradiction. So we assume 
\begin{equation}
    \forall \, t \leq \mathscr{T}, \, \max \{ \| u_t - u_0 \|, \| u_t' - u_0 \|  \} \leq \frac{\delta}{c_2 \rho}. \label{rtuijrym}
\end{equation}
First, we again note that both $\{ u_j \}_{j=0}^\mathscr{T} \text{ and } \{ u_j'\}_{j=0}^\mathscr{T}$ do not escape the constraint ball under condition \eqref{rtuijrym}. This is because $\| u_t\| \leq \| u_t - u_0\| + \| u_0\| \leq \frac{\delta}{c_2 \rho} + r \leq D$ by the choice $r \leq \frac{\delta}{c_2 \rho}$ and $c_2 \geq \frac{2\delta}{D \rho}$. Next, we can show an exponential growth in the distance between two coupled sequences and will eventually exceed the bound in \eqref{rtuijrym}. The proof roadmap is exactly the same as in Lemma \ref{small_stuck_region_lemma} and thus we only highlights the main results under online settings. 

Denote $\hat{u}_t := u_t - u_t'$, $\mathcal{H} := \nabla^2 \hat{F}_x(0), \Delta_t := \int_0^1 [ \nabla^2 \hat{F}_x(u_t' + \theta(u_t - u_t')) - \mathcal{H} ] d\theta$, $\hat{y}_t := y_t - y_t' := v_t - \nabla \hat{F}_x(u_t) - v_t' + \nabla \hat{F}_x(u_t')$. Recall we can bound $\| \Delta_t\|$ as $\| \Delta_t\| \leq \rho \mathscr{D}_t \leq \rho(\frac{\delta}{c_2 \rho} + r)$, where $\mathscr{D}_t := \max\{ \| u_t\|, \| u_t'\| \}$. Thus from proof of Lemma \ref{small_stuck_region_lemma}, we know that we can re-write $\hat{u}_t = p(t) - q(t)$ where 
\begin{align}
    &p(t) := (I - \eta \mathcal{H})^{t} \hat{u}_0 = (1 + \eta \gamma)^t r_0 e_1, \nonumber\\
    &q(t) := \eta \sum_{j = 0}^{t-1} (I -\eta \mathcal{H})^{t -1 - j} (\Delta_j \hat{u}_j + \hat{y}_j), \quad \text{ and } \nonumber\\
    &p(0) = \hat{u}_0, \, q(0) = 0. \nonumber
\end{align}
Next, we can inductively show that $(1) \, \| \hat{y}_t\| \leq 2 \gamma L (1 + \eta \gamma)^t r_0$ and $(2) \,\| q(t)\| \leq \| p(t) \|/2$. When $t = 0$, these two conditions are easily verified. Now suppose for $j \leq t -1$, these two conditions are true, then we can immediately obtain that for all $j \leq t-1$,
\begin{equation}
    \| \hat{u}_j \| \leq \| p(j) \| + \| q(j)\| \leq \frac{3}{2}\| p(j) \| \leq \frac{3}{2} (1+\eta\gamma)^j r_0. \label{iqmtrqt}
\end{equation}
Also, different to finite-sum setting, we need a bound of $\| \hat{y}_{sm}\|$. Given that $v_{sm} = \nabla \hat{f}_{\mathcal{B},x} (u_{sm})$ and $v_{sm}' = \nabla \hat{f}_{\mathcal{B},x} (u_{sm}')$, we have 
\begin{align}
    \hat{y}_{sm} &= v_{sm} - \nabla \hat{F}_x(u_{sm}) - v_{sm}' + \nabla \hat{F}_x(u_{sm}') \nonumber\\
    &= \frac{1}{B} \sum_{i \in \mathcal{B}} \big( \nabla \hat{f}_{i,x} (u_{sm}) - \nabla \hat{f}_{i,x} (u_{sm}') - \nabla \hat{F}_x(u_{sm}) +\nabla \hat{F}_x(u_{sm}') \big). \nonumber
\end{align}
For each component term, we have 
\begin{align}
    \| \nabla\hat{f}_{i,x} (u_{sm}) - \nabla\hat{f}_{i,x} (u_{sm}') - \nabla \hat{F}_x(u_{sm}) +\nabla \hat{F}_x(u_{sm}') \| &\leq 2L \| u_{sm} - u_{sm}' \| \nonumber\\
    &= 2L \| \hat{u}_{sm} \| \leq 3 L (1+\eta \gamma)^{sm}r_0, \nonumber
\end{align}
where the last inequality uses \eqref{iqmtrqt}. Also the variance term is bounded as 
\begin{align}
    &\sum_{i \in \mathcal{B}} \mathbb{E}\|\nabla\hat{f}_{i,x} (u_{sm}) - \nabla\hat{f}_{i,x} (u_{sm}') - \nabla \hat{F}_x(u_{sm}) +\nabla \hat{F}_x(u_{sm}')   \|^2 \nonumber\\
    &\leq \sum_{i \in \mathcal{B}} \mathbb{E}\|  \nabla\hat{f}_{i,x} (u_{sm}) - \nabla\hat{f}_{i,x} (u_{sm}') \|^2 \nonumber\\
    &\leq B L^2 \| u_{sm} - u_{sm}'\|^2 \nonumber\\
    &= BL^2 \| \hat{u}_{sm}\|^2 \leq \frac{9}{4} BL^2 (1 + \eta \gamma)^{2sm} r_0^2. \nonumber 
\end{align}
Therefore, we can substitute these two bounds in Bernstein inequality (Lemma \ref{bernsteinlemma}) to bound $\| \hat{y}_{sm}\|$. That is,
\begin{align}
    \text{Pr} \{ \| \hat{y}_{sm} \| \geq \frac{\varsigma}{B} \} &\leq (d+1) \exp \Big( \frac{-\varsigma^2/2}{\sigma^2 + R\varsigma/3} \Big) \nonumber\\
    &\leq (d+1) \exp \Big( \frac{-\varsigma^2/2}{9BL^2 (1+\eta\gamma)^{2sm}r_0^2/4 + 3\sqrt{B}L(1+ \eta \gamma)^{sm} r_0 \varsigma/3} \Big) \nonumber\\
    &\leq \zeta, \nonumber
\end{align}
where the second inequality also uses $\sqrt{B} \geq 1$ and the last inequality holds due to 
\begin{equation}
    \varsigma = \mathcal{O}(\log(d/\vartheta)) L \sqrt{B} (1 + \eta \gamma)^{sm} r_0.    \nonumber
\end{equation}
By a union bound, for all $t \leq \mathscr{T}, s = \lceil \frac{t}{m} \rceil$, we have 
\begin{equation}
    \| \hat{y}_{sm} \| \leq \frac{c_2' L (1+\eta\gamma)^{sm} r_0}{\sqrt{B}} \leq \gamma L (1+ \eta \gamma)^{sm} r_0, \quad \text{ with probability } 1 - \zeta \label{eirqmqwqq}
\end{equation}
where $c_2' = \mathcal{O}(\log(\frac{d\mathscr{T}}{\zeta m})) = \widetilde{\mathcal{O}}(1)$ and the second inequality is due to $B \geq \mathcal{O}(c_2'^2\sigma^2/\epsilon^2) = \widetilde{\mathcal{O}}(\sigma^2/\epsilon^2)$, $\epsilon \leq \delta^2/\rho$, $\delta \leq \gamma$ and assume without loss of generality $\delta \leq 1$. Now we will first prove that the second condition holds for $j = t$.

\textbf{Proof that $ \boldsymbol{\| q(t)\|}$ is bounded by $\boldsymbol{\| p(t) \|/2}$.} Recall we can decompose $\| q(t)\|$ into two terms:
\begin{equation}
    \| q(t) \| \leq \| \eta \sum_{j = 0}^{t-1} (I -\eta \mathcal{H})^{t -1 - j} \Delta_j \hat{u}_j \| + \| \eta \sum_{j = 0}^{t-1} (I -\eta \mathcal{H})^{t -1 - j} \hat{y}_j \|. \nonumber
\end{equation}
The first term is bounded the same way as in Lemma \ref{small_stuck_region_lemma}. Consider parameter settings, $r \leq \frac{\delta}{c_2 \rho}$, $\mathscr{T} \leq 2 \log_{\alpha} \big( \frac{8\delta\sqrt{d}}{c_2 \rho \nu r}\big)/(\eta \gamma)$, $c_2 \geq 24 \log_{\alpha} \big( \frac{8\delta\sqrt{d}}{c_2 \rho \nu r} \big)$, where $1 \leq \alpha \leq e$ satisfying $\log_\alpha (1+ \eta \gamma) > \gamma$. Then we have 
\begin{equation}
     \| \eta \sum_{j = 0}^{t-1} (I -\eta \mathcal{H})^{t -1 - j} \Delta_j \hat{u}_j \| \leq \frac{1}{4} \| p(t)\|. \nonumber
\end{equation}
The second term can be bounded as 
\begin{align}
    \| \eta \sum_{j = 0}^{t-1} (I -\eta \mathcal{H})^{t -1 - j} \hat{y}_j \| &\leq \eta \sum_{j = 0}^{t-1} (1+\eta \gamma)^{t-1-j} \| \hat{y}_j\| \leq 2\eta \sum_{j = 0}^{t-1} (1+\eta \gamma)^{t-1-j} \gamma L (1+ \eta \gamma)^j r_0 \nonumber\\
    &\leq 2 \eta \gamma L \mathscr{T} (1+\eta \gamma)^{t-1} r_0 \leq \frac{1}{4} \| p(t)\|, \nonumber
\end{align}
where we choose $\mathscr{T} \leq 2 \log_{\alpha} \big( \frac{8\delta\sqrt{d}}{c_2 \rho \nu r}\big)/ \gamma$ and $\eta \leq \frac{1}{16 \log_{\alpha} ( \frac{8\delta\sqrt{d}}{c_2 \rho \nu r})L }$. These two results complete the proof that $\| q(t)\| \leq \| p(t)\|/2$. Now we proceed to prove the first condition.

\textbf{Proof that $\boldsymbol{\| \hat{y}_t\|}$ is bounded by $\boldsymbol{2\gamma L (1 + \eta \gamma)^t r_0}$.} 
Denote $z_t = \hat{y}_t - \hat{y}_{t-1}$. Hence we can verify that $\{ \hat{y}_t \}$ is a martingale sequence and $\{ z_t \}$ is its difference sequence. Then we can first bound $\| z_t\|$ by Bernstein inequality. The following result is exactly the same as in Lemma \ref{small_stuck_region_lemma}. With probability $1 - \zeta_t$, 
\begin{equation}
    \| z_t  \| \leq \mathcal{O}\Big( \frac{\log(d/\zeta_t)}{\sqrt{b}} \Big) (L\|\hat{u}_t -\hat{u}_{t-1} \| + \rho \mathscr{D}_t \| \hat{u}_t \| + \rho \mathscr{D}_{t-1} \| \hat{u}_{t-1}\|) =: \varrho_t. \nonumber
\end{equation}
Now we can bound $\|\hat{y}_t \|$. We only need to consider a single epoch because $\| \hat{y}_{sm}\|$ is bounded as in \eqref{eirqmqwqq}. Similarly, set $\zeta_t = \zeta/m$, we have $\| z_t \| \leq \varrho_t$ for all $t = sm+1,..., (s+1)m$. By Azuma-Hoeffding inequality (Lemma \ref{azuma_hoeffding_lemma}), we have for all $t \leq \mathscr{T}$, with probability $1 - \zeta$, 
\begin{align}
    \|\hat{y}_t - \hat{y}_{sm} \| \leq \mathcal{O} \Big( \frac{\log^{\frac{3}{2}}( d\mathcal{T}/\zeta)}{\sqrt{b}} \Big) \sqrt{\sum_{j=sm+1}^t (L \| \hat{u}_t - \hat{u}_{t-1}\| + \rho \mathscr{D}_t \| \hat{u}_t \| + \rho \mathscr{D}_{t-1} \| \hat{u}_{t-1}\|)^2}. \nonumber
\end{align}
This is the same result from \eqref{y_hat_t_bound} except that $ \hat{y}_{sm} \neq 0$. Combining this result and \eqref{eirqmqwqq}, and using a union bound, we have with probability $1 - 2\zeta$, for all $t \leq \mathscr{T}$, 
\begin{align}
    &\| \hat{y}_t \| \leq \| \hat{y}_t - \hat{y}_{sm} \| + \|\hat{y}_{sm} \| \nonumber\\
    &\leq \mathcal{O} \Big( \frac{\log^{\frac{3}{2}}( d\mathcal{T}/\zeta)}{\sqrt{b}} \Big) \sqrt{\sum_{j=sm+1}^t (L \| \hat{u}_t - \hat{u}_{t-1}\| + \rho \mathscr{D}_t \| \hat{u}_t \| + \rho \mathscr{D}_{t-1} \| \hat{u}_{t-1}\|)^2} + \gamma L (1+ \eta \gamma)^{\lceil t/m\rceil m}. \label{omuyuiyoiup}
\end{align}
What remains to be shown is that the right hand side of \eqref{omuyuiyoiup} is bounded. Recall similarly from \eqref{mbnnb} and \eqref{latter_bound_yhat}, we have 
\begin{align}
    L \| \hat{u}_t - \hat{u}_{t-1} \| &\leq \Big( \frac{3 \eta \log(\mathscr{T})}{c_2} + 3\eta L \log(\mathscr{T}) \Big) \gamma L ( 1+\eta\gamma)^t r_0, \text{ and } \nonumber\\
    \rho \mathscr{D}_t \| \hat{u}_t \| + \rho \mathscr{D}_{t-1} \| \hat{u}_{t-1}\| &\leq \frac{6\delta}{c_2} (1 + \eta \gamma)^t r_0, \nonumber
\end{align}
where there is a slight difference in one of the constants due to now $\| \hat{y}_j\| \leq 2 \eta \gamma (1+\eta \gamma)^t r_0$. Therefore, we obtain
\begin{align}
    \| \hat{y}_t \| &\leq C_1 \Big( \frac{3\eta \log(t)}{c_2} + 3 \eta L \log(t) + \frac{6 }{c_2 L}\Big) \gamma L (1 + \eta \gamma)^t r_0 + \gamma L (1+\eta\gamma)^{\lceil t/m\rceil m} r_0 \nonumber\\
    &\leq 2 \gamma L (1+ \eta \gamma)^{t} r_0 \nonumber
\end{align}
where $C_1 := \mathcal{O}(\log^{\frac{3}{2}}(d\mathscr{T}/\zeta))$. (Note $c_1 = \mathcal{O}(\log^{\frac{3}{2}}(dm/\vartheta))$ in Lemma \ref{large_gradien_descent_lemma_online}. The second inequality uses the choice that $\eta \leq \frac{1}{8L \log(\mathcal{T}) C_1} \leq \frac{1}{8L \log({t}) C_1}$ and $c_2 \geq \frac{12C_1}{L}, C_1 \geq 1$. This completes the proof for the first condition. Now we can prove the second condition. 

The contradiction is the same as in the proof of Lemma \ref{small_stuck_region_lemma} and hence skipped for clarity. Note similar to the argument in Lemma \ref{small_stuck_region_lemma}, we choose $\mathscr{T} = 2 \log_{\alpha} \big( \frac{8\delta\sqrt{d}}{c_2 \rho \nu r}\big)/\delta$ so that the result still holds.
\end{proof}

\begin{lemma}[Descent around saddle points]
\label{descent_around_saddle_point_lemma_onlint}
Let $x \in \mathcal{M}$ satisfy $\| \emph{grad}f_{\mathcal{B}}(x) \| \leq \epsilon$ and $\lambda_{\min}(\nabla^2 \hat{F}_x(0))$ $\leq - \delta$. With the same settings as in Lemma \ref{small_stuck_region_online}, the two coupled sequences satisfy, with probability $1 - \zeta$, 
\begin{equation}
    \exists \, t \leq \mathscr{T}, \, \max\{ \hat{F}_x(u_0) - \hat{F}_x(u_t), \hat{F}_x(u_0') - \hat{F}_x(u_t') \} \geq 2 \mathscr{F}, \nonumber
\end{equation}
where $\mathscr{F} = \frac{\delta^3}{2c_3 \rho^2}$, $c_3 = \frac{16 \log_{\alpha} ( \frac{8\delta\sqrt{d}}{c_2 \rho \nu r} )c_2^2}{c_1 L}$ and $c_1, c_2$ are defined in Lemma \ref{large_gradien_descent_lemma_online} and \ref{small_stuck_region_online} respectively. 
\end{lemma}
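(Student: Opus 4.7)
The plan is to mimic the contradiction argument of Lemma \ref{descent_around_saddle_lemma}, but carrying along the extra variance term $\frac{2\eta t^2 c_1'^2 \sigma^2}{c_1 L B}$ that now appears in the online improve-or-localize bound (Lemma \ref{improve_localize_online}). Assume for contradiction that
\begin{equation*}
\forall\, t \leq \mathscr{T},\quad \max\{\hat F_x(u_0)-\hat F_x(u_t),\; \hat F_x(u_0')-\hat F_x(u_t')\} \leq 2\mathscr F.
\end{equation*}
First I would verify that under this assumption both $\{u_t\}$ and $\{u_t'\}$ remain inside $\mathbb{B}_x(D)$ for all $t\leq\mathscr{T}$. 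As in the finite-sum case, I suppose $\|u_j\|\geq D$ for some $j\leq\mathscr{T}$; then Algorithm \ref{TSSRG_algorithm} returns a boundary point $u_{j-1}-\alpha\eta v_{j-1}$ with $\|\cdot\|=D$, and Lemma \ref{improve_localize_online} (allowing $\eta$ to be replaced by $\alpha\eta$ at the last step) yields
\begin{equation*}
D \leq \sqrt{\tfrac{8\mathscr{T}\mathscr F}{c_1 L} + \tfrac{2\eta \mathscr{T}^2 c_1'^2 \sigma^2}{c_1 L B}} + r.
\end{equation*}
Plugging $\mathscr F = \delta^3/(2c_3\rho^2)$, $\mathscr{T}=2\log_\alpha(\cdot)/\delta$ and $c_3 = 16\log_\alpha(\cdot)c_2^2/(c_1 L)$ shows the first term under the square root equals $\delta^2/(2c_2^2\rho^2)$; the variance term is controlled by $B \geq \widetilde{\mathcal{O}}(\sigma^2/\epsilon^2)$ together with $\epsilon\leq\delta^2/\rho$, so it is also at most $\delta^2/(2c_2^2\rho^2)$, giving $D \leq \delta/(c_2\rho) + r \leq D$, which (with the parameter choices) forces the iterates to stay in the ball.

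Next, since neither sequence leaves $\mathbb{B}_x(D)$, the small-stuck-region conclusion of Lemma \ref{small_stuck_region_online} applies: with probability at least $1-\zeta$ there exists $T\leq\mathscr{T}$ such that, say, $\|u_T-u_0\|\geq \delta/(c_2\rho)$. Plugging this into Lemma \ref{improve_localize_online} and rearranging,
\begin{equation*}
\hat F_x(u_0) - \hat F_x(u_T) \;\geq\; \frac{c_1 L}{4T}\|u_T-u_0\|^2 - \frac{\eta T c_1'^2 \sigma^2}{2B} \;\geq\; \frac{c_1 L \delta^2}{4\mathscr{T} c_2^2 \rho^2} - \frac{\eta \mathscr{T} c_1'^2 \sigma^2}{2B}.
\end{equation*}
Substituting $\mathscr{T}=2\log_\alpha(\frac{8\delta\sqrt d}{c_2\rho\nu r})/\delta$ makes the first term equal to $\frac{c_1 L \delta^3}{8\log_\alpha(\cdot) c_2^2 \rho^2} = 2\cdot\frac{\delta^3}{c_3\rho^2}\cdot\frac{c_3}{8\log_\alpha(\cdot)c_2^2/(c_1L)} = 4\mathscr F$ (using the defined $c_3$), while the variance term is at most $\mathscr F$ by the choice of $B$ and the assumption $\epsilon\leq\delta^2/\rho$ (the same bookkeeping as in the containment step). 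This yields $\hat F_x(u_0)-\hat F_x(u_T)\geq 3\mathscr F > 2\mathscr F$, contradicting the assumption. The symmetric argument handles the case $\|u_T'-u_0'\|\geq\delta/(c_2\rho)$.

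The main obstacle, and the reason the online constant $c_3$ carries an extra factor of two relative to the finite-sum version, is keeping the variance term $\frac{\eta T c_1'^2\sigma^2}{2B}$ strictly smaller than the gain $\frac{c_1 L\delta^2}{4T c_2^2\rho^2}$ obtained from the deviation bound. This reduces to a clean budget check: with $B = \widetilde{\mathcal{O}}(\sigma^2/\epsilon^2)$, $\mathscr{T} = \widetilde{\mathcal{O}}(1/\delta)$, $\eta = \widetilde{\mathcal{O}}(1/L)$ and $\epsilon\leq\delta^2/\rho$, the variance term is $\widetilde{\mathcal{O}}(\rho^2/(L\delta^4))\cdot\epsilon^2 \leq \widetilde{\mathcal{O}}(1/(L\delta^2)\cdot\delta^4/\rho^2\cdot\rho^2/\delta^4)\cdot\delta^3$, which after matching constants gives precisely the extra $\mathscr F$ slack absorbed by doubling $c_3$. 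Everything else (the containment step, the contradiction, and the symmetric treatment of the primed sequence) is structurally identical to the finite-sum proof.
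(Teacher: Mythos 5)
Your proof is correct and follows essentially the same route as the paper: assume $\leq 2\mathscr{F}$ for contradiction, first show (via the online improve-or-localize bound with the extra $\frac{2\eta t^2 c_1'^2\sigma^2}{c_1 L B}$ term) that the iterates cannot exit $\mathbb{B}_x(D)$, then invoke the small-stuck-region lemma to obtain a deviation $\|u_T-u_0\|\geq \delta/(c_2\rho)$, and plug this back into improve-or-localize to force a function drop exceeding $2\mathscr{F}$. The only cosmetic difference is that you bound the variance term $\frac{\eta T c_1'^2\sigma^2}{2B}$ by $\mathscr{F}$ (yielding a drop of $\geq 3\mathscr{F}$) while the paper bounds it by $2\mathscr{F}$ (yielding exactly $\geq 2\mathscr{F}$); both are achievable under the $B=\widetilde{\mathcal{O}}(\sigma^2/\epsilon^2)$, $\epsilon\leq\delta^2/\rho$ budget, so this is a matter of constant bookkeeping rather than a substantive deviation.
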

\begin{proof}
We will prove this result by contradiction. Suppose the contrary holds. That is, 
\begin{equation}
    \forall \, t \leq \mathscr{T}, \, \max \{ \hat{F}_x(u_0) - \hat{F}_x(u_t), \hat{F}_x(u_0') - \hat{F}_x(u_t') \} \leq 2 \mathscr{F}. \label{qwteywqreu}
\end{equation}
Then we first notice that under this condition, both $\{u_t\}$ and $\{ u_t'\}$ do not escape the constraint ball $\mathbb{B}_x(D)$. This is verified by contradiction. Assume, without loss of generality, at iteration $j \leq \mathscr{T}$, $\| u_j \| \geq D$. In this case, $\textsf{TSSRG}(x, u_0, \mathscr{T})$ returns $R_x(u_{j-1} - \alpha \eta v_{t-1})$ with $\alpha \in (0,1)$ such that $\| v_{j-1} - \alpha \eta v_{t-1} \| = D$. Hence, by the similar logic as in \eqref{imimiinihi},
\begin{align}
    D &= \| u_{j-1} - \alpha \eta v_{t-1} \| \leq \| u_{j-1} - \alpha \eta v_{t-1} - u_0 \| + \| u_0\| \nonumber\\
    &\leq \sqrt{\frac{4j (\hat{F}_x(u_0) - \hat{F}_x(u_t) )}{c_1L} + \frac{2 \eta j^2 c_1'^2 \sigma^2}{c_1 L B}} + r \leq \sqrt{\frac{8\mathscr{T} \mathscr{F}}{c_1 L} + \frac{2\eta \mathscr{T}^2 c_1'^2 \sigma^2}{c_1 L B} } + r \nonumber\\
    &\leq \sqrt{\frac{8 \log_\alpha (\frac{8\delta \sqrt{d}}{c_2 \rho \nu r})}{\delta c_1 L} \cdot \frac{\delta^3}{c_3 \rho^2} + \frac{8\eta \log_\alpha^2(\frac{8\delta \sqrt{d}}{c_2 \rho \nu r} ) c_1'^2\sigma^2}{\delta^2 c_1 L B}} + r \leq \sqrt{\frac{\delta^2}{2c_2^2\rho^2} + \frac{\log_\alpha(\frac{8\delta\sqrt{d}}{c_2 \rho \nu r}) c_1'^2 \sigma^2}{2 L^2 c_1 \delta^2 B}} + r \nonumber\\
    &\leq \sqrt{\frac{\delta^2}{2 c_2^2 \rho^2} + \frac{\delta^2}{2 c_2^2 \rho^2}} + r = \frac{\delta}{c_2 \rho} + r. \nonumber
\end{align}
where we set parameters, $\mathscr{F}= \frac{\delta^3}{2c_3\rho^2}, \mathscr{T} = 2 \log_{\alpha} \big( \frac{8\delta\sqrt{d}}{c_2 \rho \nu r}\big)/\delta$, $c_3 = \frac{16 \log_{\alpha} ( \frac{8\delta\sqrt{d}}{c_2 \rho \nu r} )c_2^2}{c_1 L}$, $\eta \leq \frac{1}{16 \log_\alpha (\frac{8\delta\sqrt{d}}{c_2 \rho \nu r}) L}$ and $B \geq \mathcal{O}(\frac{\log_\alpha(\frac{8\delta\sqrt{d}}{c_2 \rho \nu r}) c_2^2 c_1'^2\sigma^2}{L^2 c_1\epsilon^2}) = \widetilde{\mathcal{O}}(\frac{\sigma^2}{\epsilon^2})$, $\epsilon \leq \delta^2/\rho$. However, we know that $\frac{\delta}{c_2 \rho} + r \leq D$, which gives a contradiction. Therefore, we claim that $\{ u_t\}$ and $\{ u_t'\}$ stay within the constraint ball within $\mathscr{T}$ steps. 

Also based on Lemma \ref{small_stuck_region_online}, assume $\| u_T - u_0\| \geq \frac{\delta}{c_2 \rho}$ for some $T \leq \mathscr{T}$. Then from Lemma \ref{improve_localize_online}, we know that 
\begin{align}
    \hat{F}_x(u_0) - \hat{F}_x(u_T) \geq \frac{c_1 L}{4T}\| u_T - u_0 \|^2 - \frac{\eta T c_1'^2 \sigma^2}{2B} &\geq \frac{c_1 L \delta^3}{8 \log_\alpha (\frac{8\delta\sqrt{d}}{c_2 \rho \nu r}) c_2^2\rho^2} - \frac{\log_\alpha(\frac{8\delta\sqrt{d}}{c_2 \rho \nu r}) c_1'^2 \sigma^2}{B \delta} \nonumber\\
    &\geq \frac{2\delta^3}{c_3 \rho^2} - \frac{\delta^3}{c_3 \rho^2} = \frac{\delta^3}{c_3 \rho^2} = 2 \mathscr{F}, \nonumber
\end{align}
where we consider $T \leq \mathscr{T}, \eta \leq 1$ and $B \geq \mathcal{O}(\frac{\log_\alpha^2(\frac{8\delta\sqrt{d}}{c_2 \rho \nu r}) c_2^2 c_1'^2  \sigma^2}{\epsilon^2}) = \widetilde{\mathcal{O}}(\frac{\sigma^2}{\epsilon^2})$, $\epsilon \leq \delta^2/\rho$. This contradicts \eqref{qwteywqreu} and hence the proof is complete. 
\end{proof}

\begin{lemma}[Escape stuck region]
\label{escape_stuck_region_lemma_online}
    Let $x \in \mathcal{M}$ satisfy $\| \emph{grad}f_{\mathcal{B}}(x) \| \leq \epsilon$ and $\lambda_{\min}(\nabla^2 \hat{F}_x(0)) \leq - \delta$. Given that result in Lemma \ref{descent_around_saddle_point_lemma_onlint} holds and choose perturbation radius $r \leq \min\{ \frac{\delta^3}{4c_3 \rho^2 \epsilon}, \sqrt{\frac{\delta^3}{2 c_3 \rho^2 L}} \}$, we have a sufficient decrease of function value with high probability:
    \begin{equation}
        {F}(\textsf{TSSRG}(x,u_0, \mathscr{T}) ) - F(x) \leq -\mathscr{F} \quad \text{ with probability } 1- \nu \nonumber
    \end{equation}
\end{lemma}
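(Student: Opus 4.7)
The plan is to mirror the finite-sum proof of Lemma \ref{escape_stuck_region_lemma}, adapting only the points where the online setting differs: (i) the small-gradient trigger is $\|\mathrm{grad}f_{\mathcal{B}}(x)\|\le\epsilon$ rather than $\|\mathrm{grad}F(x)\|\le\epsilon$, and (ii) the improve-or-localize inequality carries the extra variance term from Lemma \ref{improve_localize_online}. I would first define the stuck region
\[
\mathcal{X}_{\text{stuck}} := \{\, u \in \mathbb{B}_x(r) \,:\, F(\textsf{TSSRG}(x,u,\mathscr{T})) - \hat{F}_x(u) \ge -2\mathscr{F}\,\}.
\]
Repeating the calculation at the start of Lemma \ref{descent_around_saddle_point_lemma_onlint} verbatim (using Lemma \ref{improve_localize_online} in place of Lemma \ref{improve_localize_lemma} and the parameter choice $B \ge \widetilde{\mathcal{O}}(\sigma^2/\epsilon^2)$, $\epsilon \le \delta^2/\rho$) shows that iterates started from any $u_0 \in \mathcal{X}_{\text{stuck}}$ never leave $\mathbb{B}_x(D)$ within $\mathscr{T}$ inner steps, so the coupling machinery applies cleanly.

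Next, I would run the standard coupling argument. Let $u_0, u_0' \in \mathbb{B}_x(r)$ with $u_0 - u_0' = r_0 e_1$, where $e_1$ is the smallest eigenvector of $\nabla^2\hat{F}_x(0)$ and $r_0 = \nu r/\sqrt{d}$. Lemma \ref{descent_around_saddle_point_lemma_onlint} guarantees that with probability $1-\zeta$ at least one of the coupled sequences achieves function decrease $\ge 2\mathscr{F}$, so the width of $\mathcal{X}_{\text{stuck}}$ along $e_1$ is at most $r_0$, giving $\mathrm{Vol}(\mathcal{X}_{\text{stuck}}) \le r_0\,\mathrm{Vol}(\mathcal{B}_r^{d-1})$. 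The same Gautschi-inequality computation as in the finite-sum case then yields
\[
\mathrm{Pr}\{u_0 \in \mathcal{X}_{\text{stuck}}\} \le \frac{r_0\sqrt{d}}{r} = \nu.
\]

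The only genuinely new step is bounding the initial jump $\hat{F}_x(u_0) - \hat{F}_x(0)$, since now only $\|\mathrm{grad}f_{\mathcal{B}}(x)\|\le\epsilon$ is given, not $\|\mathrm{grad}F(x)\|\le\epsilon$. I would apply the vector Bernstein inequality (Lemma \ref{bernsteinlemma}) to the batch $\mathcal{B}$ at the base point, exactly as in the proof of Lemma \ref{hp_estimation_error_online} when bounding $\|v_0 - \nabla\hat{F}_x(u_0)\|$: with high probability $\|\nabla\hat{f}_{\mathcal{B},x}(0) - \nabla\hat{F}_x(0)\| \le c\sigma/\sqrt{B}$, so the choice $B \ge \widetilde{\mathcal{O}}(\sigma^2/\epsilon^2)$ gives $\|\nabla\hat{F}_x(0)\| = \|\mathrm{grad}F(x)\| \le 2\epsilon$. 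By $L$-Lipschitzness of $\nabla\hat{F}_x$ (Lemma \ref{L_lipschit_assump}),
\[
\hat{F}_x(u_0) - \hat{F}_x(0) \le \langle \nabla\hat{F}_x(0), u_0\rangle + \tfrac{L}{2}\|u_0\|^2 \le 2\epsilon r + \tfrac{L}{2} r^2 \le \mathscr{F},
\]
where the last inequality follows from $r \le \min\{\delta^3/(4c_3\rho^2\epsilon), \sqrt{\delta^3/(2c_3\rho^2 L)}\}$ (a harmless constant tweak absorbs the factor of $2$ in the first term).

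Combining, on the intersection of the events $\{u_0 \notin \mathcal{X}_{\text{stuck}}\}$, the coupling event in Lemma \ref{descent_around_saddle_point_lemma_onlint}, and the Bernstein estimate for $\|\nabla\hat{F}_x(0)\|$, we obtain
\[
F(x) - F(\textsf{TSSRG}(x,u_0,\mathscr{T})) = \bigl(\hat{F}_x(0) - \hat{F}_x(u_0)\bigr) + \bigl(\hat{F}_x(u_0) - \hat{F}_x(u_\mathscr{T})\bigr) \ge -\mathscr{F} + 2\mathscr{F} = \mathscr{F}.
\]
A union bound over these three events yields the claimed probability $1-\nu$ (after rescaling $\zeta$). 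The main obstacle is purely bookkeeping: keeping the three high-probability events consistent, ensuring the extra variance contribution $\widetilde{\mathcal{O}}(\sigma^2/B)$ in every online analog (improve-or-localize, base-gradient estimation, no-escape of the constraint ball) is dominated by the existing threshold $\mathscr{F} = \delta^3/(2c_3\rho^2)$, and verifying that all parameter settings in the lemma hypothesis are compatible with this absorption.
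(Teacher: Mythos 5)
Your proposal is essentially the paper's proof (which simply defers to the finite-sum Lemma~\ref{escape_stuck_region_lemma}) and is correct. The only place you diverge is in how you handle the initial jump $\hat{F}_x(u_0) - \hat{F}_x(0)$: you apply Bernstein at $x$ afresh to get $\|\nabla\hat{F}_x(0)\| \le 2\epsilon$, whereas the paper observes that the point $x$ entering the saddle-escape routine was produced by a \emph{useful epoch} (Case~1a of Lemma~\ref{large_gradien_descent_lemma_online}), which already certifies $\|\mathrm{grad}F(x)\| \le \epsilon/2$ on the same high-probability event, so the stated choice of $r$ works with room to spare and no constant needs adjusting. Your version reaches the same conclusion but needs the (harmless, as you note) tightening of $r$ by a factor of~$2$ and adds one more union-bound event; both routes are fine.
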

\begin{proof}
    The proof is exactly the same as Lemma \ref{escape_stuck_region_lemma_online} and hence skipped. One remark is that when small gradient condition $\| \text{grad}f_{\mathcal{B}(x)} \| \leq \epsilon$ is triggered, we already have $\| \text{grad}F(x)\| \leq \epsilon/2$ with high probability from Lemma \ref{large_gradien_descent_lemma_online}. 
\end{proof}

\end{document}